\newenvironment{breakablealgorithm}
{
	\begin{center}
		\refstepcounter{algorithm}
		\hrule height.8pt depth0pt \kern2pt
		\renewcommand{\caption}[2][\relax]{
			{\raggedright\textbf{\ALG@name~\thealgorithm} ##2\par}%
			\ifx\relax##1\relax 
			\addcontentsline{loa}{algorithm}{\protect\numberline{\thealgorithm}##2}%
			\else 
			\addcontentsline{loa}{algorithm}{\protect\numberline{\thealgorithm}##1}%
			\fi
			\kern2pt\hrule\kern2pt
		}
	}{
		\kern2pt\hrule\relax
	\end{center}
}
\newtheorem{theorem}{\bf{Theorem}}
\newtheorem{assumption}{{Assumption}}
\newtheorem{definition}{\bf{Definition}}
\newtheorem{example}{\bf{Example}}
\newtheorem{lemma}{\bf{Lemma}}
\newtheorem{remark}{\bf{Remark}}
\theoremstyle{plain}
\begin{document}
	
	\title{\huge Distributed Generalized Nash Equilibrium Seeking of  $N$-Coalition Games with Full and Distributive Constraints}
	\author{Chao Sun and Guoqiang Hu\thanks{This study is supported  by A*STAR under its Industry Alignment Fund (LOA Award I1901E0046). C. Sun is with the Continental-NTU Corporate Lab,  Nanyang Technological University, 50 Nanyang Avenue, 639798, Singapore (Email: csun002@e.ntu.edu.sg). G. Hu is with the School of
			Electrical and Electronic Engineering, Nanyang Technological University,
			50 Nanyang Avenue, 639798, Singapore (Email: gqhu@ntu.edu.sg).}}
	\maketitle
	
	\begin{abstract}                          
		In this work, we investigate the distributed generalized Nash equilibrium (GNE) seeking problems for $N$-coalition games with  inequality  constraints. First, we  study the scenario where each agent in a coalition has full information of all the constraints. A finite-time average consensus-based approach is proposed by using full decision information. It is proven that the algorithm converges to a GNE (specifically, a variational equilibrium) of the game. Then, based on the finite-time consensus tracking, we propose a distributed algorithm by using partial decision information only, where only neighboring action information is needed for each agent. Furthermore, we investigate  the scenario where only distributive constraint information is available for the agents. The terminology ``distributive" here refers to the scenario where the constraint information acquisition and processing are conducted across multiple agents in a coalition, and each agent has access to only a subset of constraints, instead of the scenario where each agent has  full information of all the constraints. In this case, the multiplier computation and the constraint information required for each individual are reduced. A full decision information algorithm and a partial decision information algorithm for this scenario are proposed, respectively. Numerical examples are given to verify the proposed algorithms.
	\end{abstract}
	
	\begin{IEEEkeywords}                           
		$N$-Coalition Game; Generalized Nash Equilibrium Seeking; Cooperation and Competition        
	\end{IEEEkeywords}

	\section{Introduction} \label{Sec: Intro}

	The $N$-coalition game, proposed in \cite{ye2017simultaneous}, provides a game framework to analyze the simultaneous cooperation and competition in a networked game. Instead of an individual agent as a player in a general non-cooperative game, the $N$-coalition game generalizes the concept of player to a coalition of networked agents which collaboratively optimize their aggregated payoff function. This game model can be used to model many social, commercial and industrial behaviors. For example, in market competition, multiple small firms combine their own resources and develop the collective ability to compete effectively with large firms. In addition, it can provide a unified formulation for distributed optimization  \cite{gharesifard2013distributed1,9376701} and  Nash equilibrium (NE) seeking of  non-cooperative games \cite{lou2015nash,zhang2019distributed,Romano2020}.
	
	Recently, there has been a growing interest on the $N$-coalition game. The authors in \cite{ye2019unified, pang2020nash,ye2020extremum,pang2020gradient} studied  NE  seeking for unconstrained $N$-coalition games.  In \cite{zeng2019generalized}, the inequality constraint is considered. However, the constraint is only related to one agent in each coalition. To be applied in a  wider range of applications, it is expected to propose a GNE seeking algorithm for more general $N$-coalition games with multiple full constraints that may be related to all players' information.
	
	In addition, all the aforementioned studies use full decision information from the other coalitions instead of only partial (neighboring) decision information. Partial decision information (distributed) algorithms for unconstrained $N$-coalition games were firstly proposed in \cite{ye2017distributed}. However, to the best of our knowledge, the  partial decision information algorithm for constrained $N$-coalition games hasn't been proposed in the literature yet. Note that when the number of agents in each coalition is 1, the distributed GNE seeking problem for constrained $N$-coalition games can be reduced to a distributed GNE seeking problem for generalized noncooperative games (e.g.,  \cite{yi2019operator,  lu2018distributed,franci2020distributed}). Thus, the studied problem is more general.
	
	Motivated by the above considerations, in this work, we study the distributed GNE seeking problem for $N$-coalition games with multiple inequality constraints, which may be related to all agents' information in the game. Two scenarios are considered. In the first scenario, each agent in a coalition has full knowledge of all the constraints. In the second scenario, each agent has access to only a subset of constraints, which is called distributive constraints in this work.  The contributions can be summarized as follows.
	
	1) A finite-time consensus estimation-based approach is proposed for $N$-coalition game with inequality constraints. As a comparison, \cite{ye2017simultaneous,ye2019unified, pang2020nash,ye2020extremum,pang2020gradient} studied NE seeking for unconstrained games while this work considers the  GNE seeking for constrained games, which is more complicated.  Compared with \cite{zeng2019generalized}, the inequality constraints considered in this work are more general in the sense that multiple full constraints that may be related to all players' information are studied. While in \cite{zeng2019generalized}, the inequality constraint is only  related to one agent in each coalition;
	
	2)  Partial decision information algorithms for $N$-coalition games with inequality constraints are proposed, where only neighboring information exchange is required.  Compared with \cite{ye2017distributed}, this work considered constrained games while \cite{ye2017distributed} considered unconstrained games only. In addition, the algorithm given in \cite{zeng2019generalized} uses full decision information from the other coalitions instead of partial decision information. 
	
	3) We consider the scenario where only distributive constraints are available. Each agent only deal with a subset of the constraints, instead of all constraints. This framework may reduce the computation burden of each individual in a coalition,  especially when the gradient computation for   constraints is complicated. To the best of the authors' knowledge, this scenario has not been studied in the literture yet.
	
	The remainder of this work can be summarized as follows: In Section \ref{pre}, notations and preliminary knowledge are given. In Section \ref{pro}, the problem is formulated, and the  $N$-coalition game model is described. In Section  \ref{main3}, the scenario of full constraint information is studied. In Section \ref{2main},  the scenario  of distributive constraint information is studied.  In Section \ref{simu}, numerical examples are provided. Finally, in Section \ref{conclu}, the conclusions of the work are given.
	

	\section{Preliminaries}  \label{pre}
	
	\subsection{Notations\label{Notation}}
	
	Throughout this paper, $\mathbb{R}$, $\mathbb{R}^n$ and   $\mathbb{R}^{n\times n}$ denote the real number set, the $n$-dimensional real vector set, and the $n\times n$ dimensional real matrix set, respectively. $\mathbb{R}_{\geq 0}$ represents the non-negative real number set. $\mathbb{R}_{\geq 0}^n$ represents the $n$-dimensional  non-negative real vector set. $|\cdot|$ is the absolute value. $||\cdot||$ and $||\cdot||_\infty$ represent the Euclidean norm and the infinity norm, respectively. Given $\mathcal{V}=\left\{  1,...,n\right\}$,  $[z_i]_{i\in\mathcal{V}}$ denotes a column vector $[z_1^T,\cdots,z_n^T]^T$. $\text{diag}\{z_{i}, i\in\mathcal{V}\}$ denotes a diagonal matrix with its $i$-th diagonal element being $z_{i}$. Let $e_{ij}\in\mathbb{R}$ with $i,j\in\mathcal{V}$. $[e_{ij}]$ denotes a matrix with its $i$-th row and $j$-th column being $e_{ij}$.   For a  vector $z\in\mathbb{R}^n$, $[z]_i$ is the $i$-th element of $z$. For a set $Q$ and a variable $x$, $\mathcal{P}_{Q}[x]=\text{argmin}_{y\in Q}||x-y||^2$ represents the Euclidean projection of  $x$ onto $Q$. $0$ represents an all-zero vector with an appropriate dimension or the real number zero.  $\mathbf{1}_n$ represents an all-one vector with an appropriate dimension. For a vector $g$, $g\leq 0 $ means that each element of $g$ is less than or equal to zero. $\nabla_x f(z)$ is the gradient of function $f$ with respect to $x$ at point $z$. $\lambda_{min}$ and  $\lambda_{max}$ refer to the minimum eigenvalue and the maximum eigenvalue of a matrix, respectively. $\text{card}\{\cdot\}$ denotes the number of elements in a set.
	
	\subsection{Preliminaries \label{Graph theory}}
	
	Let $\mathcal{G}=\left\{  \mathcal{V},\mathcal{E}\right\}  $ denote an
	undirected graph, where $\mathcal{V=}\left\{  1,...,n\right\}  $ indicates the
	node set and $\mathcal{E\subset V\times V}$ indicates the edge set.
	$\mathcal{V}_{i}=\left\{  j\in\mathcal{V\mid}(j,i)\in\mathcal{E}\right\}  $
	denotes the neighborhood set of node $i$.  Path $\mathcal{P}$ between $v_{0}$ and $v_{k}$\ is the
	sequence $\left\{  v_{0},...,v_{k}\right\}  $ where $(v_{i-1},v_{i}%
	)\in\mathcal{E}$ for $i=1,...,k$ and the nodes are distinct. The number $k$
	is defined as the length of path $\mathcal{P}$. Graph $\mathcal{G}$\ is
	connected if for any two nodes, there is a path in $\mathcal{G}$. The matrix
	$A_d=\left[  a_{ij}\right]  \in%
	\mathbb{R}
	^{n\times n}$ denotes the adjacency matrix of $\mathcal{G}$, where $a_{ij}=1$
	if and only if $(j,i)\in\mathcal{E}$ else $a_{ij}=0$. In this paper, we
	suppose that there is no self loop. The matrix $L\triangleq D-A_d\in%
	\mathbb{R}
	^{n\times n}$ is called the Laplacian matrix of $\mathcal{G}$, where
	$D=\text{diag} { \{d_{i}, i\in\mathcal{V}\}} \in%
	\mathbb{R}
	^{n\times n}$ is a diagonal matrix with $d_{i}=\sum\nolimits_{j=1}^{n}a_{ij}
	$ \cite{RenTAC05}.

	
	A map $F: \mathbb{R}^n\rightarrow \mathbb{R}^n$ is monotone if for all $x,y\in \mathbb{R}^n$, $(F(x)-F(y))^T(x-y)\geq 0$. It is strictly monotone if the inequality is strict when $x\neq y$. $F$ is strongly monotone if there exists a positive constant $m$ such that $(F(x)-F(y))^T(x-y)\geq m||x-y||^2$ for all $x,y\in \mathbb{R}^n$  {\cite{facchinei2007finite}}.
	
	A set $\Omega \subseteq \mathbb{R}^n$ is closed if and only if the limit of every convergent sequence  contained in $\Omega$ is also an element of $\Omega$ {(Theorem 3.2.8 of \cite{stephen})}. Let $x_k\in\mathbb{R}^n$, $k=1,\cdots, \infty$ be a sequence of real vectors. $\bar{x}$ is  an accumulation point {(cluster point)} of sequence $x_k$ if there exists a subsequence $x_{k_s}$ of $x_k$ such that  $\lim_{k_s \to \infty} x_{k_s} = \bar{x}$ {(Page 23 of \cite{topo})}.    For a bounded sequence $x_k\in\mathbb{R}^n$, it contains at least one accumulation point {(Bolzano-Weierstrass
		theorem)}. Furthermore, {if} its accumulation point is unique, {then} the sequence is convergent {to the accumulation point, which can be proven according to the property of limit inferior and limit superior}.
	
	The following lemma will be used in the subsequent convergence analysis.
	
	\begin{lemma} (Projection Operator Properties, Theorem 1.5.5 of \cite{facchinei2007finite}) Let $\Omega$ be a  nonempty, closed and convex subset of $\mathbb{R}^n$. Then, the following conclusions hold: 
		
		\begin{enumerate}
			\item For any $x, y\in\mathbb{R}^n$,
			$(P_\Omega[x]-P_\Omega[y])^T(x-y)\geq||P_\Omega[x]-P_\Omega[y]||^2$ and $||P_\Omega[x]-P_\Omega[y]||\leq ||x-y||$; 
			
			\item $(y-P_\Omega[x])^T(P_\Omega[x]-x)\geq 0$ for any $y\in\Omega$. 
			
		\end{enumerate}\label{projector}
	\end{lemma}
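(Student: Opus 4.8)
The plan is to prove the two parts in the reverse order of their statement, since the variational characterization in part~2 is the more primitive fact from which part~1 follows purely algebraically. Before anything else, I would record that the projection $P_\Omega[x]=\mathrm{argmin}_{y\in\Omega}\|x-y\|^2$ is well defined: the map $y\mapsto\|x-y\|^2$ is continuous and coercive, and $\Omega$ is closed, so a minimizer exists; uniqueness follows from the strict convexity of the squared Euclidean norm together with the convexity of $\Omega$ (if two distinct minimizers existed, their midpoint would lie in $\Omega$ and attain a strictly smaller value). This well-definedness is exactly where the hypotheses ``closed'' and ``convex'' are consumed.

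For part~2, fix $x\in\mathbb{R}^n$ and any $y\in\Omega$, and exploit convexity by considering the feasible segment $P_\Omega[x]+t\,(y-P_\Omega[x])\in\Omega$ for $t\in[0,1]$. Define $\phi(t)=\|x-P_\Omega[x]-t\,(y-P_\Omega[x])\|^2$. Since $P_\Omega[x]$ minimizes the distance to $x$ over $\Omega$, we have $\phi(t)\geq\phi(0)$ on $[0,1]$, so the one-sided derivative satisfies $\phi'(0)\geq0$. Computing $\phi'(0)=-2\,(x-P_\Omega[x])^T(y-P_\Omega[x])$ and rearranging yields $(y-P_\Omega[x])^T(P_\Omega[x]-x)\geq0$, which is precisely part~2.

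Part~1 then follows by applying part~2 twice and adding. Writing the inequality of part~2 at the point $x$ with the feasible choice $y:=P_\Omega[y]$ gives $(P_\Omega[y]-P_\Omega[x])^T(P_\Omega[x]-x)\geq0$, and writing it at the point $y$ with $y:=P_\Omega[x]$ gives $(P_\Omega[x]-P_\Omega[y])^T(P_\Omega[y]-y)\geq0$. Summing these and collecting the common factor $(P_\Omega[x]-P_\Omega[y])^T$ produces $(P_\Omega[x]-P_\Omega[y])^T(x-y)\geq\|P_\Omega[x]-P_\Omega[y]\|^2$, the first claim of part~1. The nonexpansiveness bound then drops out by applying the Cauchy--Schwarz inequality, $\|P_\Omega[x]-P_\Omega[y]\|^2\leq(P_\Omega[x]-P_\Omega[y])^T(x-y)\leq\|P_\Omega[x]-P_\Omega[y]\|\,\|x-y\|$, and dividing by $\|P_\Omega[x]-P_\Omega[y]\|$ (the case $P_\Omega[x]=P_\Omega[y]$ being trivial).

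I expect the only genuinely delicate step to be the derivation of part~2: one must justify forming the one-sided derivative of $\phi$ at the endpoint $t=0$ and correctly extract its sign from the minimality of $P_\Omega[x]$ over the convex set $\Omega$, rather than over all of $\mathbb{R}^n$. Once the variational inequality of part~2 is in hand, part~1 is a routine algebraic manipulation requiring no further geometric input.
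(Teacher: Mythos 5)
Your proof is correct and is the standard argument: the paper itself gives no proof of this lemma, citing it directly as Theorem 1.5.5 of Facchinei--Pang, and your derivation (well-posedness of the projection, the variational inequality of part~2 via the one-sided derivative of $\phi$ at $t=0$, then part~1 by summing two instances of part~2 and applying Cauchy--Schwarz) is exactly the textbook proof of that result. No gaps.
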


Consensus tracking and average consensus tracking are often applied to estimate or observe unknown signals \cite{chen2017connection}. In this work, the following conclusions on finite-time consensus tracking and finite-time average consensus tracking are used.
	
	\begin{lemma} (Finite-Time Consensus Tracking, Theorem 3.1 of \cite{Cao2012}) Consider a multi-agent system defined on $\mathcal{G}$, which is undirected and connected. Let $x_i \in\mathbb{R}$, $i=1,\cdots, n$ be the state of agent $i$ such that 
		\begin{align}
			\dot{x}_i&=-\alpha\sum_{j\in\mathcal{V}\cup \{0\}}a_{ij}(x_i-x_j) \notag \\
			&-\beta\text{sgn}(\sum_{j\in\mathcal{V}\cup \{0\}} a_{ij} (x_i-x_j)), 
		\end{align}
		where  $\alpha\geq 0$, $\beta>0$, and $x_0$ is a time-varying signal  satisfying $|\dot{x}_0(t)|\leq d$. $a_{i0}=1$ if agent $i$ has access to $x_0$ and $a_{i0}=0$ elsewise. If $\beta> d$, then $x_i(t)=x_0(t)$ for all $t\geq \bar{T}\triangleq \frac{\sqrt{\tilde{x}^T(0)H\tilde{x}(0)}\sqrt{\lambda_{\max}(H)}}{(\beta-d)\lambda_{\min}(H)}$,   where $\tilde{x}$ is a $n$-dimensional vector such that $[\tilde{x}]_i=x_i-x_0$, $i=1,\cdots, n$,  $H=L+B$, $L$ is the Laplacian matrix and $B=\text{diag}\{a_{i0}, i\in\mathcal{V}\}$. Furthermore, for any $i\in\mathcal{V}$,  $|x_i(t)-x_0(t)|\leq \sqrt{\frac{\lambda_{\max}(H)}{\lambda_{\min}(H)}}||\tilde{x}(0)||$ at any $t\geq 0$.  \label{l3}
	\end{lemma}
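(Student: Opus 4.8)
The plan is to turn the leader-tracking problem into a stabilization problem for the error and then run a nonsmooth Lyapunov argument. First I would introduce the stacked error $\tilde{x}=[\tilde{x}_i]_{i\in\mathcal{V}}$ with $\tilde{x}_i=x_i-x_0$. Since $x_i-x_j=\tilde{x}_i-\tilde{x}_j$ for $i,j\in\mathcal{V}$ and $x_i-x_0=\tilde{x}_i$, the driving sum $\sum_{j\in\mathcal{V}\cup\{0\}}a_{ij}(x_i-x_j)$ equals the $i$-th entry of $(L+B)\tilde{x}=H\tilde{x}$. Hence the closed-loop error obeys $\dot{\tilde{x}}=-\alpha H\tilde{x}-\beta\,\text{sgn}(H\tilde{x})-\mathbf{1}_n\dot{x}_0$, with $\text{sgn}(\cdot)$ taken entrywise. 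Because $\mathcal{G}$ is undirected and connected and at least one agent accesses the leader (some $a_{i0}=1$), the matrix $H=L+B$ is symmetric positive definite, so $\lambda_{\min}(H)>0$; this structural fact is what I would record before anything else.

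Next I would take $V=\tfrac12\tilde{x}^T H\tilde{x}$ as a Lyapunov candidate and differentiate along the error dynamics. Writing $y\triangleq H\tilde{x}$ and using the identity $y^T\text{sgn}(y)=\|y\|_1$, one obtains $\dot{V}=-\alpha\|y\|^2-\beta\|y\|_1-\dot{x}_0\,\mathbf{1}_n^T y$. Discarding the nonpositive term $-\alpha\|y\|^2$ and bounding the disturbance via $|\dot{x}_0\,\mathbf{1}_n^T y|\le|\dot{x}_0|\,\|y\|_1\le d\|y\|_1$ yields $\dot{V}\le-(\beta-d)\|y\|_1$, which is strictly negative off the origin precisely when $\beta>d$.

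To extract the finite settling time I would convert this into a $\sqrt{V}$-type differential inequality. Combining $\|y\|_1\ge\|y\|=\|H\tilde{x}\|\ge\lambda_{\min}(H)\|\tilde{x}\|$ with $V\le\tfrac12\lambda_{\max}(H)\|\tilde{x}\|^2$ (so $\|\tilde{x}\|\ge\sqrt{2V/\lambda_{\max}(H)}$) gives $\dot{V}\le-(\beta-d)\lambda_{\min}(H)\sqrt{2/\lambda_{\max}(H)}\sqrt{V}$. Integrating $\tfrac{d}{dt}\sqrt{V}\le-\tfrac12(\beta-d)\lambda_{\min}(H)\sqrt{2/\lambda_{\max}(H)}$ forces $\sqrt{V(t)}$ to reach zero no later than $\bar{T}=\frac{\sqrt{\tilde{x}^T(0)H\tilde{x}(0)}\sqrt{\lambda_{\max}(H)}}{(\beta-d)\lambda_{\min}(H)}$, and since $V\ge0$ with the origin invariant, $V(t)\equiv0$ — i.e. $x_i(t)=x_0(t)$ — for all $t\ge\bar{T}$. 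For the uniform-in-time estimate, $\dot{V}\le0$ for every $t$ yields $V(t)\le V(0)$; sandwiching $\tfrac12\lambda_{\min}(H)\|\tilde{x}(t)\|^2\le V(t)\le V(0)\le\tfrac12\lambda_{\max}(H)\|\tilde{x}(0)\|^2$ and using $|x_i-x_0|\le\|\tilde{x}\|$ delivers $|x_i(t)-x_0(t)|\le\sqrt{\lambda_{\max}(H)/\lambda_{\min}(H)}\,\|\tilde{x}(0)\|$.

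The main obstacle I anticipate is nonsmoothness: the right-hand side is discontinuous in $\tilde{x}$, so solutions must be interpreted in the Filippov sense and $\dot{V}$ treated as a set-valued (Clarke) derivative rather than a classical time derivative. Related care is needed to justify that once the sliding surface $\tilde{x}=0$ is reached the trajectory stays there (an equivalent-control/invariance argument), and the identity $y^T\text{sgn}(y)=\|y\|_1$ must be read appropriately on the set where some components of $y$ vanish. Everything downstream of these points is a routine computation.
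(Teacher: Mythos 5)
The paper does not prove this lemma at all --- it is imported verbatim as Theorem 3.1 of the cited reference \cite{Cao2012} --- so there is no in-paper argument to compare against. Your reconstruction is correct and is precisely the standard proof behind that cited result: the error dynamics $\dot{\tilde{x}}=-\alpha H\tilde{x}-\beta\,\mathrm{sgn}(H\tilde{x})-\mathbf{1}_n\dot{x}_0$, the Lyapunov function $V=\tfrac12\tilde{x}^T H\tilde{x}$, the bound $\dot{V}\le-(\beta-d)\|H\tilde{x}\|_1$, and the $\sqrt{V}$ integration reproduce the stated settling time $\bar{T}$ and the uniform bound exactly. The only substantive point worth recording is the one you already flag: positive definiteness of $H$ requires that at least one agent have $a_{i0}=1$, a hypothesis the lemma statement leaves implicit, and the Filippov/invariance caveats you list are exactly the ones a rigorous write-up would need to discharge.
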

	
		\begin{lemma} (Finite-Time Average Consensus Tracking, Theorem 1 of \cite{chen2012distributed}) Consider a multi-agent system defined on $\mathcal{G}$, which is undirected and connected. Let $x_i, z_i\in\mathbb{R}$, $i=1,\cdots, n$ be the states of agent $i$ such that 
		\begin{align}
			\dot{z}_i&=\alpha\sum_{j\in\mathcal{V}_i} \text{sgn}(x_j-x_i) \notag \\
			x_i&=z_i+r_i(t),
		\end{align}
		where $z_i(0)=0$, $\alpha>0$ and $r_i(t)$ is a time-varying signal satisfying $|\dot{r}_i(t)|\leq c$ for a certain $c>0$ and all $i=1,\cdots, n$, $t\geq 0$. If $\alpha>c$, then for all $t\geq T\triangleq \frac{1}{2(\alpha-c)}\sum_{i=1}^n\sum_{j\in\mathcal{V}_i}|x_i(0)-x_j(0)|$, $x_i=x_j=\frac{1}{n}\sum_{j=1}^nr_j(t)$. \label{l2}
	\end{lemma}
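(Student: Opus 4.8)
The plan is to combine two facts: a conservation law showing that the \emph{instantaneous average} of the $x_i$ always equals $\bar{r}(t)\triangleq\frac{1}{n}\sum_{j=1}^n r_j(t)$, and a nonsmooth Lyapunov argument showing that the $x_i$ reach \emph{consensus} in finite time $T$. Together these force $x_i(t)=\bar{r}(t)$ for all $t\ge T$, which is the assertion.

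First I would establish average preservation. Summing the $z_i$-dynamics gives $\sum_{i=1}^n\dot{z}_i=\alpha\sum_{i=1}^n\sum_{j\in\mathcal{V}_i}\text{sgn}(x_j-x_i)$, and since $\mathcal{G}$ is undirected each edge $\{i,j\}$ contributes $\text{sgn}(x_j-x_i)+\text{sgn}(x_i-x_j)=0$, so $\sum_i\dot{z}_i=0$. With $z_i(0)=0$ this yields $\sum_i z_i(t)\equiv0$, hence $\sum_i x_i(t)=\sum_i r_i(t)$ and $\frac{1}{n}\sum_i x_i(t)=\bar{r}(t)$ for all $t$. Consequently, once consensus $x_1=\cdots=x_n$ holds, the common value must equal $\bar{r}(t)$, which is exactly the claimed tracking value.

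For the finite-time consensus I would use the Lyapunov function $V=\frac{1}{2}\sum_{i=1}^n\sum_{j\in\mathcal{V}_i}|x_i-x_j|$ (the sum of $|x_i-x_j|$ over all edges), noting that $V(0)=(\alpha-c)T$. Writing $s_i\triangleq\sum_{j\in\mathcal{V}_i}\text{sgn}(x_i-x_j)$ so that $\dot{x}_i=-\alpha s_i+\dot{r}_i$, a symmetrization of the double sum (again using undirectedness) gives $\dot{V}=\sum_i s_i\dot{x}_i=-\alpha\sum_i s_i^2+\sum_i s_i\dot{r}_i$. Since each $s_i$ is integer-valued, $s_i^2\ge|s_i|$, and $|\dot{r}_i|\le c$ gives $\sum_i s_i\dot{r}_i\le c\sum_i|s_i|$, so $\dot{V}\le-(\alpha-c)\sum_i|s_i|$. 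A ``maximal-agent'' argument then shows that whenever the $x_i$ are not all equal there is an agent attaining $\max_i x_i$ with a strictly smaller neighbor (by connectivity), so for that agent all $\text{sgn}$ terms are nonnegative with at least one equal to $1$, giving $s_i\ge1$ and hence $\sum_i|s_i|\ge1$. Thus $\dot{V}\le-(\alpha-c)$ off consensus, which drives $V$ to zero by time $T=V(0)/(\alpha-c)$; moreover $\dot{V}\le0$ together with $V\ge0$ implies $V$ stays at zero afterward, so consensus persists. Combined with the average identity, $x_i(t)=\bar{r}(t)$ for all $t\ge T$.

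The main obstacle is rigor under the discontinuous right-hand side: because $\text{sgn}(\cdot)$ is discontinuous, solutions and the time derivative of $V$ must be interpreted in the Filippov sense, and the computation of $\dot{V}$ should be read as a bound on the set-valued (generalized) derivative, with $\text{sgn}(0)$ replaced by the interval $[-1,1]$. The delicate point is the post-convergence phase, where the trajectory enters a sliding mode on $\{x_1=\cdots=x_n\}$: one must verify that $\alpha>c$ guarantees an admissible equivalent control keeping the trajectory on this surface (equivalently, that $\dot{V}\le0$ continues to hold in the set-valued sense), so that consensus, and hence tracking of $\bar{r}(t)$, is maintained rather than merely attained.
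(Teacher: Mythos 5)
The paper offers no proof of this lemma: it is imported verbatim as Theorem 1 of \cite{chen2012distributed}, so there is nothing in the paper itself to compare your argument against. That said, your proposal is essentially the argument of that reference, and it is the right one: antisymmetry of the signum coupling over an undirected graph together with $z_i(0)=0$ gives the invariant $\sum_i x_i(t)=\sum_i r_i(t)$, and the edge-disagreement function $V=\tfrac{1}{2}\sum_{i}\sum_{j\in\mathcal{V}_i}|x_i-x_j|$, decaying at rate at least $\alpha-c$ off the consensus set (via the maximal-agent argument and connectivity), reproduces exactly the stated settling time $T=V(0)/(\alpha-c)$.

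One step needs more care than your sketch gives it, and it is not only the post-convergence sliding phase you flag. In the identity $\dot V=-\alpha\sum_i s_i^2+\sum_i s_i\dot r_i$ you use the same symbol $s_i$ for two different objects: the element of the generalized gradient of $V$ (where the convention $\mathrm{sgn}(0)=0$ is the correct one almost everywhere) and the Filippov selection entering $\dot x_i$ (where $\mathrm{sgn}(0)$ must be replaced by the interval $[-1,1]$). On the set where some, but not all, of the $x_i$ coincide these differ, so the quadratic term is really $-\alpha\sum_i p_i q_i$ with $p_i\neq q_i$ in general, and the inequality $s_i^2\geq|s_i|$, which relies on $s_i$ being an integer, does not apply verbatim. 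Hence the decay estimate $\dot V\leq-(\alpha-c)$ \emph{before} consensus is reached also requires the set-valued Lie derivative machinery of \cite{chen2012distributed} (or Cort\'es' nonsmooth stability calculus), not just a pointwise computation at points where all $x_i$ are distinct. With that repair, and the conservation identity you established, the argument is complete.
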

	
	\section{Problem formulation}  \label{pro}
	
	\subsubsection{Game Description}
	The game considered in this work consists of $N>1$ interacting coalitions that are self-interested to minimize their own objective
	functions. The objective function of coalition $i\in \mathcal{N}\triangleq\{1,\cdots, N \}$ is defined by $f_i(\mathbf{x}_i,\mathbf{x}_{-i})=\sum_{j=1}^{n_i} f_{ij}(\mathbf{x}_i,\mathbf{x}_{-i})$, where $f_{ij}(\mathbf{x}_i,\mathbf{x}_{-i})$ represents the objective function of agent $j\in\{1,\cdots, n_i\}\triangleq\mathcal{N}_i$ in coalition $i$,  $n_i$ is a positive integer representing the number of agents in coalition $i$, $\mathbf{x}_i=[x_{ij}]_{j\in\mathcal{N}_i}\in\Omega_i\subseteq \mathbb{R}^{m_i}$ denotes the action vector of agents in coalition $i$, $x_{ij}\in\Omega_{ij}\subseteq\mathbb{R}^{m_{ij}}$ represents the action of agent $j$ in coalition $i$,  $\mathbf{x}_{-i}\in\mathbb{R}^{\sum_{j\in\mathcal{N},j\neq i}m_j}$ denotes the action vector of agents in all the other coalitions except coalition $i$, $m_i$ is the total dimension of the agents in coalition $i$ and $m_{ij}$ is the dimension of agent $j$ in coalition $i$.  Let $\mathbf{x}=[\mathbf{x}_{1}^T, \cdots, \mathbf{x}_{N}^T]^T  \in\Omega\triangleq \Omega_{1}\times \cdots \times \Omega_{N}$ be the total action vector of all the agents and $M=\sum_{i=1}^Nm_i$ be its dimension.
	
	Suppose that each coalition, as a player, is subject to the following shared constraints:  $g(\mathbf{x}_i,\mathbf{x}_{-i})=[g_1(\mathbf{x}_i,\mathbf{x}_{-i}), \cdots, g_p(\mathbf{x}_i,\mathbf{x}_{-i})]^T\in\mathbb{R}^p\leq 0$, where $p$ represents the total number of the constraints. 
	
	In the following development, for notational convenience, we  replace $f_i(\mathbf{x}_i,\mathbf{x}_{-i})$ and $g(\mathbf{x}_i,\mathbf{x}_{-i})$ with $f_i(\mathbf{x})$ and $g(\mathbf{x})$.
	
	\subsubsection{Graph Topology}
	Denote the communication graph in coalition $i$ as $\mathcal{G}_i$ with the vertice set $\mathcal{N}_i$ and the edge set $\mathcal{E}_i$.  The following assumption is required for the subsequent development.
	
	\begin{assumption}
		$\mathcal{G}_i$ is undirected and connected. \label{innercoal}
	\end{assumption}
	  \begin{remark}
	  	This assumption is commonly used in consensus control \cite{chen2012distributed}, distributed optimization \cite{wang2020distributed} and distributed NE/GNE seeking areas \cite{Romano2020}.  
	  \end{remark}	
	
	\subsubsection{Game Assumptions} The following assumptions on the objective functions and the constraints are needed.

	\begin{assumption}
		\label{fassumption} The objective function $f_{i}(\mathbf{x})$ is convex and twice continuously differentiable in  $\mathbf{x}_i$ for all $\mathbf{x}_{-i}$.
	\end{assumption}
	
	\begin{assumption}
		\label{assumption_convex} 1) Each component of $g(\mathbf{x})$ is convex and twice continuously differentiable in $\mathbf{x}$.
		2) The set $\Delta\triangleq\{\mathbf{x}\in\Omega|g(\mathbf{x})\leq 0\}$
		is nonvoid, convex, and compact. 
		3) The Slater's condition holds for $\Delta$. 
	\end{assumption}
	
	The gradient mapping $F(\mathbf{x})\triangleq[\nabla_{\mathbf{x}_i}  f_i(\mathbf{x})]_{i\in\mathcal{N}}$ is required to satisfy the following assumption.
	
	\begin{assumption}
		\label{mono} $F(\mathbf{x})$ is strictly monotone.
	\end{assumption}
	
	\subsubsection{Objective} The objective of this work is to design an algorithm for each agent in the coalitions such that $\mathbf{x}$ tends to a GNE of the game, defined as follows:
	\begin{definition}
		\textbf{(GNE,  \cite{facchinei2007generalized})}   $\mathbf{x}^*=[\mathbf{x}_{1}^{*T}, \cdots, \mathbf{x}_{N}^{*T}]^T$ is called a GNE of the game if	$f_{i}(\mathbf{x}_i,\mathbf{x}^*_{-i}) \leq f_{i}( \mathbf{x}_i^*,\mathbf{x}^*_{-i}),$ $\forall \mathbf{x}_{i}\in\Omega_i\cap\{{\mathbf{x}_i\in\mathbb{R}^{m_i}|} g(\mathbf{x}_i,\mathbf{x}^*_{-i})\leq 0\}, \forall i \in \mathcal{N}.$ \label{ref}
	\end{definition}
	
	In this work, we are interested to find a variational equilibrium (VE) of the game, which is defined as the solution to VI($F(\mathbf{x}), \Delta$), i.e., the problem of finding $\mathbf{x}^*\in\Delta$ such that 
	\begin{align}
		F(\mathbf{x})^T(\mathbf{x}-\mathbf{x}^*)\geq 0, \qquad  \forall \mathbf{x}\in \Delta.
	\end{align}
	
	Based on Theorem 2.3.3 of \cite{facchinei2007finite}, since $F(\mathbf{x}^*)$ is strictly monotone, VI$(F(\mathbf{x}), \Delta)$ has at most one solution. Since $\Delta$ is bounded, according to  Corollary 2.2.5 of \cite{facchinei2007finite},  VI$(F(\mathbf{x}), \Delta)$ has exactly one solution. 
	
	\subsubsection{Some Lemmas}
	
	The following two lemmas are adaptions of the existing conclusions in the literature  according to Assumptions \ref{fassumption}-\ref{mono}. 
	
	\begin{lemma} \label{lemma2}
		(\textit{KKT Condition for VE}) Under Assumptions \ref{fassumption}-\ref{mono}, ${\mathbf{x}^*}\in \Delta $ is a VE  if and only if there exists non-negative Lagrangian multipliers $\lambda^*_s\in\mathbb{R}, s=1,\cdots, p$ such that 
		\begin{align}
			0\in F(\mathbf{x}^*)+\sum_{s=1}^p\lambda_{s}^*G_s(\mathbf{x}^*)+\mathcal{N}_{cone}(\mathbf{x}^*, \Omega), \notag \\
			\lambda_{s}^*g_s(\mathbf{x}^*)=0, g_s(\mathbf{x}^*)\leq 0, \label{kkt2}
		\end{align}
		where $G_s(\mathbf{x})\triangleq [\nabla_{\mathbf{x}_i}   g_s(\mathbf{x}) ]_{i\in\mathcal{N}}$, and $\mathcal{N}_{cone}(\mathbf{x}^*, \Delta)$ is the normal cone of $\Delta$ at $\mathbf{x}^*$ \footnote{ See Section 2.2 and Section 3.3 of \cite{ruszczynski2011nonlinear} for more details on the definition of cones, normal cones, tangent cones and polar cones.}. According to Lemma 2.38 of \cite{ruszczynski2011nonlinear}, (\ref{kkt2}) can also be written as
		\begin{align}
			\mathbf{x}^*=\mathcal{P}_{\Omega}[{\mathbf{x}^*}-(F(\mathbf{x}^*)+\sum_{s=1}^p\lambda_{s}^*G_s(\mathbf{x}^*))], \notag \\
			\lambda_{s}^*g_s(\mathbf{x}^*)=0, g_s(\mathbf{x}^*)\leq 0. \label{kkt}
		\end{align}
	\end{lemma}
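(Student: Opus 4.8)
The plan is to translate the geometric VE condition into a normal-cone inclusion and then apply the standard Karush--Kuhn--Tucker decomposition for a feasible region cut out by convex inequalities.

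First I would observe that, by Assumption~\ref{assumption_convex}, $\Delta$ is closed and convex, so the defining inequality $F(\mathbf{x}^*)^T(\mathbf{x}-\mathbf{x}^*)\geq 0$ for all $\mathbf{x}\in\Delta$ is, by the very definition of the normal cone of a convex set, equivalent to the inclusion $-F(\mathbf{x}^*)\in\mathcal{N}_{cone}(\mathbf{x}^*,\Delta)$. Thus $\mathbf{x}^*\in\Delta$ is a VE if and only if this inclusion holds, and no constraint qualification is needed at this stage. I would also record that, because each shared constraint $g_s$ is a function of the full vector $\mathbf{x}$, the stacked partial gradient $G_s(\mathbf{x}^*)=[\nabla_{\mathbf{x}_i}g_s(\mathbf{x}^*)]_{i\in\mathcal{N}}$ is exactly the full gradient $\nabla_{\mathbf{x}}g_s(\mathbf{x}^*)$, which is what lets the constraint terms enter the KKT system in the claimed form. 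Note that only the convexity and Slater parts of Assumption~\ref{assumption_convex} are actually exercised; convexity of $f_i$ (Assumption~\ref{fassumption}) and strict monotonicity (Assumption~\ref{mono}) enter elsewhere (existence/uniqueness) rather than in this equivalence.

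The core step is to decompose $\mathcal{N}_{cone}(\mathbf{x}^*,\Delta)$. Writing $\Delta=\{\mathbf{x}\in\Omega: g(\mathbf{x})\leq 0\}$ with each $g_s$ convex and differentiable (Assumption~\ref{assumption_convex}.1) and $\Omega$ convex, I would invoke the normal-cone sum rule for convex sets, namely
\[
\mathcal{N}_{cone}(\mathbf{x}^*,\Delta)=\mathcal{N}_{cone}(\mathbf{x}^*,\Omega)+\Big\{\sum_{s=1}^p\lambda_s\nabla_{\mathbf{x}}g_s(\mathbf{x}^*):\lambda_s\geq 0,\ \lambda_s g_s(\mathbf{x}^*)=0\Big\}.
\]
Substituting into $-F(\mathbf{x}^*)\in\mathcal{N}_{cone}(\mathbf{x}^*,\Delta)$ then produces multipliers $\lambda_s^*\geq 0$ satisfying complementary slackness and $0\in F(\mathbf{x}^*)+\sum_{s}\lambda_s^*G_s(\mathbf{x}^*)+\mathcal{N}_{cone}(\mathbf{x}^*,\Omega)$, which is (\ref{kkt2}); conversely, the inclusion ``$\supseteq$'' in the display holds for free by convexity, giving the sufficiency direction (KKT $\Rightarrow$ VE) without any qualification. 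Finally I would pass from (\ref{kkt2}) to the projection form (\ref{kkt}): setting $v=F(\mathbf{x}^*)+\sum_s\lambda_s^*G_s(\mathbf{x}^*)$, the inclusion $-v\in\mathcal{N}_{cone}(\mathbf{x}^*,\Omega)$ is equivalent, by part~2 of Lemma~\ref{projector} (equivalently Lemma~2.38 of \cite{ruszczynski2011nonlinear}), to $\mathbf{x}^*=\mathcal{P}_{\Omega}[\mathbf{x}^*-v]$.

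The main obstacle is the reverse inclusion ``$\subseteq$'' in the normal-cone decomposition, i.e., showing every element of $\mathcal{N}_{cone}(\mathbf{x}^*,\Delta)$ can be written as a normal-cone element of $\Omega$ plus a non-negative combination of active constraint gradients. This is exactly where a constraint qualification is indispensable, and it is supplied by the Slater condition of Assumption~\ref{assumption_convex}.3: Slater guarantees both that the sum of $\mathcal{N}_{cone}(\mathbf{x}^*,\Omega)$ and the cone generated by the active gradients is closed, and that the resulting multipliers exist and are bounded. Care must be taken to apply Slater relative to $\Omega$ (a strictly feasible point lying in $\Omega$), so that the qualification governs the inequality constraints while the abstract constraint $\mathbf{x}\in\Omega$ is carried through the $\mathcal{N}_{cone}(\cdot,\Omega)$ term rather than being linearized.
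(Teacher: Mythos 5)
Your proof is correct and follows essentially the same route as the paper's: both reduce the VE condition to the inclusion $-F(\mathbf{x}^*)\in\mathcal{N}_{cone}(\mathbf{x}^*,\Delta)$, decompose that normal cone under Slater's condition into $\mathcal{N}_{cone}(\mathbf{x}^*,\Omega)$ plus the cone generated by the active constraint gradients, and finally convert the $\Omega$-normal-cone inclusion into the projection fixed-point form. The only cosmetic difference is that the paper routes the decomposition through polar cones of tangent cones (Theorem 3.20 of \cite{ruszczynski2011nonlinear}) and then identifies $[\mathcal{T}(\mathbf{x}^*,\Omega)]^\circ$ with $\mathcal{N}_{cone}(\mathbf{x}^*,\Omega)$, which is exactly the normal-cone sum rule you invoke directly.
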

	
	\begin{proof}
		See Appendix \ref{ap1}. 
	\end{proof}
	
	Furthermore, according to a similar argument as in Theorem 3.1 of \cite{facchinei2007generalized}, we can obtain the following conclusion:
	
	\begin{lemma}   \label{lemma3}
		(Relationship between VE and GNE) Under Assumptions 	\ref{fassumption}-\ref{mono}, 
		1) if ${\mathbf{x}^*}$ is a VE and thus for some $\lambda^*_s$ (\ref{kkt}) holds, then  ${\mathbf{x}^*}$ is a GNE; 
		2) if ${\mathbf{x}^*}$ is a GNE at which (\ref{kkt}) holds, then ${\mathbf{x}^*}$ is a VE. 
	\end{lemma}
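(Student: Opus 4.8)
The plan is to exploit the product structure $\Omega=\Omega_1\times\cdots\times\Omega_N$, which makes the normal cone separate blockwise as $\mathcal{N}_{cone}(\mathbf{x}^*,\Omega)=\prod_{i\in\mathcal{N}}\mathcal{N}_{cone}(\mathbf{x}_i^*,\Omega_i)$, while $F(\mathbf{x}^*)$ and $G_s(\mathbf{x}^*)$ are the stacked coalition gradients $[\nabla_{\mathbf{x}_i}f_i(\mathbf{x}^*)]_{i\in\mathcal{N}}$ and $[\nabla_{\mathbf{x}_i}g_s(\mathbf{x}^*)]_{i\in\mathcal{N}}$. Using this, I would pass between the aggregate common-multiplier system (\ref{kkt2}) that characterizes a VE through Lemma \ref{lemma2}, and the family of per-coalition KKT systems that characterize each coalition's best response in a GNE. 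The strict monotonicity of Assumption \ref{mono} is not needed for the equivalence itself; only Assumptions \ref{fassumption} and \ref{assumption_convex} enter.

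For part 1, I would start from a VE $\mathbf{x}^*\in\Delta$, so by Lemma \ref{lemma2} there are common nonnegative multipliers $\lambda_s^*$ for which (\ref{kkt2}) holds. Since the inclusion in (\ref{kkt2}) is a direct sum over the coalition blocks, it is equivalent to requiring, for every $i\in\mathcal{N}$,
\begin{align}
0\in\nabla_{\mathbf{x}_i}f_i(\mathbf{x}^*)+\sum_{s=1}^p\lambda_s^*\nabla_{\mathbf{x}_i}g_s(\mathbf{x}^*)+\mathcal{N}_{cone}(\mathbf{x}_i^*,\Omega_i), \notag
\end{align}
together with $\lambda_s^*g_s(\mathbf{x}^*)=0$ and $g_s(\mathbf{x}^*)\leq 0$. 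This is precisely the KKT system of coalition $i$'s problem $\min_{\mathbf{x}_i\in\Omega_i}f_i(\mathbf{x}_i,\mathbf{x}_{-i}^*)$ subject to $g(\mathbf{x}_i,\mathbf{x}_{-i}^*)\leq 0$, with the multiplier vector $(\lambda_s^*)$. Under Assumption \ref{fassumption} ($f_i$ convex in $\mathbf{x}_i$) and the convexity in Assumption \ref{assumption_convex}, this problem is convex, so its KKT conditions are sufficient for global optimality. Hence $\mathbf{x}_i^*$ minimizes $f_i(\cdot,\mathbf{x}_{-i}^*)$ over the feasible set for each $i$, which is exactly Definition \ref{ref}, so $\mathbf{x}^*$ is a GNE.

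For part 2, the conclusion is almost immediate. If $\mathbf{x}^*$ is a GNE at which the common-multiplier system (\ref{kkt}) holds with some nonnegative $\lambda_s^*$, then the ``if'' direction of Lemma \ref{lemma2} directly gives that $\mathbf{x}^*$ solves VI$(F(\mathbf{x}),\Delta)$, i.e., $\mathbf{x}^*$ is a VE. I would emphasize that the GNE hypothesis, combined with Slater's condition in Assumption \ref{assumption_convex}, guarantees that each coalition admits KKT multipliers; the extra hypothesis that (\ref{kkt}) holds is precisely the requirement that these multipliers be shared across all coalitions, which is the single feature distinguishing a VE from a generic GNE.

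The main obstacle is the blockwise decomposition step: one must verify that the aggregate inclusion (\ref{kkt2}) splits cleanly into the per-coalition KKT systems and, crucially, that the \emph{same} $\lambda_s^*$ appears in every block. This shared-multiplier property is exactly what a VE encodes and what a generic GNE need not satisfy. A secondary point worth checking is that Assumption \ref{fassumption} only provides convexity of $f_i$ in $\mathbf{x}_i$ for fixed $\mathbf{x}_{-i}$, but this partial convexity is all that the sufficiency of the per-coalition KKT requires, so no joint convexity of $f_i$ in $\mathbf{x}$ is needed.
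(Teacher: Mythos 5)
Your proof is correct and follows essentially the same route as the paper: part 1 splits the aggregate KKT system (\ref{kkt2}) into the per-coalition KKT systems (the paper invokes Theorem 3.27 of \cite{ruszczynski2011nonlinear} for the sufficiency of KKT in each coalition's convex problem, which is exactly your blockwise argument made explicit), and part 2 is a direct application of Lemma \ref{lemma2}. Your added observations---that the shared multiplier is the essential content and that only partial convexity of $f_i$ in $\mathbf{x}_i$ is needed---are accurate refinements but do not change the argument.
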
	
	
	\begin{proof}
		See Appendix  \ref{ap2}.
	\end{proof}
	
	%
	%
	
	\section{GNE Seeking of Generalized $N$-coalition Games with Full Constraint Information} \label{main3}

	In the full constraint information scenario, as shown in Fig. \ref{fig:constraintfullinfo}, we suppose that each agent in a coalition has full information of all the constraints.

	\begin{figure} [htbp]
		\centering
		\includegraphics[width=0.9\linewidth]{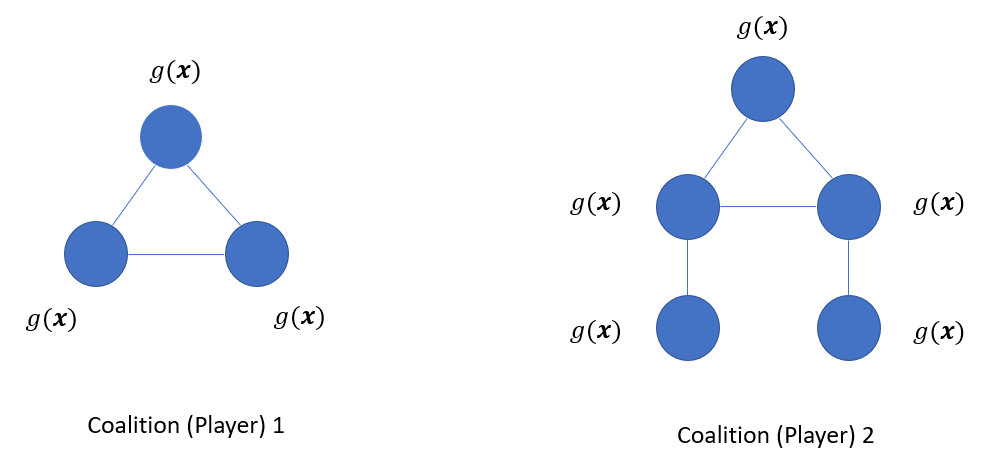}
		\caption{Generalized $N$-coalition games with full constraint information, where $g(\mathbf{x})=[g_1(\mathbf{x}),g_2(\mathbf{x}),g_3(\mathbf{x}),g_4(\mathbf{x}),g_5(\mathbf{x})]^T$. Each agent handles five constraints. }
		\label{fig:constraintfullinfo}
	\end{figure}

	\subsection{Full Decision Information}
	
	In this section, to facilitate a better understanding of the results,  we first design a full decision information algorithm for $N$-coalition games with full constraint information. The analysis and results in this section will be used as a basis for the analysis in the subsequent sections. 
	
	\subsubsection{Algorithm Design}
	
	The  full decision information  algorithm for agent $j$ in coalition $i$ is designed as follows.

	\begin{breakablealgorithm} \label{a2}
		\caption{Full Decision Information A}
		\hspace*{0.02in} \raggedright {\bf Initialization:}  For  $i=1,\cdots, N$, $j,k=1,\cdots, n_i$, $s=1,\cdots, p$, let  $\mathbf{w}_{ij}^k(0)=0$, ${\lambda}_{sij}(0)=1$, and ${\mathbf{x}}_{ij}(0)\in\Omega_{ij}$.
		
		\hspace*{0.02in} \raggedright {\bf Dynamics:} 
		\begin{subequations} \label{al3}
			\begin{align}
				\dot{\mathbf{x}}_{ij}&=\mathcal{P}_{\Omega_{ij}}[{\mathbf{x}}_{ij}-(\mathbf{y}_{ij}^j+\sum_{s=1}^p\lambda_{sij}\nabla_{\mathbf{x}_{ij}} g_s(\mathbf{x}))]-{\mathbf{x}}_{ij}, \label{al3a}\\
				\mathbf{y}_{ij}^k&=\mathbf{w}_{ij}^k+n_i{\nabla_{\mathbf{x}_{ik}} f_{ij}(\mathbf{x})},  \label{al3b}\\
				\dot{\mathbf{w}}_{ij}^k&=-\gamma_i\sum_{r=1}^{n_i} a_{i,jr} \text{sgn}(\mathbf{y}_{ij}^k-\mathbf{y}_{ir}^k), \label{al3c} \\
				\dot{\lambda}_{sij}&=\mathcal{P}_{\mathbb{R}_{\geq 0}}[\lambda_{sij}+g_s(\mathbf{x})]-\lambda_{sij}, \label{al3d}
			\end{align}
		\end{subequations}
	\end{breakablealgorithm}
	where  $a_{i,jr}\in\{0,1\}$ is the element on the $j$-th row and the $r$-th column of the adjacency matrix of $\mathcal{G}_i$, and $\gamma_i$ is a positive gain that is shared in coalition $i$.
	
	To facilitate the subsequent analysis, we define an auxiliary dynamical system:
	\begin{align}
		\dot{\lambda}_{s}&=\mathcal{P}_{\mathbb{R}_{\geq 0}}[\lambda_{s}+g_s(\mathbf{x})]-\lambda_{s}, s=1,\cdots, p, \label{al3r1}
	\end{align}
	with  ${\lambda}_{s}(0)=1\in\mathbb{R}$. 
	
	The initial conditions ${\lambda}_{sij}(0)=1$ and ${\lambda}_{s}(0)=1$ can guarantee that  ${\lambda}_{sij}={\lambda}_{s}$ for all $t\geq 0$.  Thus, the trajectory of $\mathbf{x}_{ij}$ given in (\ref{al3}) is mathematically equivalent to the trajectory of $\mathbf{x}_{ij}$ in the following auxiliary system: 
	\begin{subequations} \label{al4}
		\begin{align}
			\dot{\mathbf{x}}_{ij}&=\mathcal{P}_{\Omega_{ij}}[{\mathbf{x}}_{ij}-(\mathbf{y}_{ij}^j+\sum_{s=1}^p\lambda_{s}\nabla_{\mathbf{x}_{ij}} g_s(\mathbf{x}))]-{\mathbf{x}}_{ij},  \label{al5a} \\
			\mathbf{y}_{ij}^k&=\mathbf{w}_{ij}^k+n_i{\nabla_{\mathbf{x}_{ik}} f_{ij}(\mathbf{x})},   \label{al5b}  \\
			\dot{\mathbf{w}}_{ij}^k&=-\gamma_i\sum_{r=1}^{n_i} a_{i,jr}\text{sgn} (\mathbf{y}_{ij}^k-\mathbf{y}_{ir}^k),  \label{al5c}  \\
			\dot{\lambda}_{s}&=\mathcal{P}_{\mathbb{R}_{\geq 0}}[\lambda_{s}+g_s(\mathbf{x})]-\lambda_{s}. \label{al5e}
		\end{align}
	\end{subequations}

	In  (\ref{al3b}) and (\ref{al3c}), agent $j$ in coalition $i$ updates estimation variables  $\mathbf{y}_{ij}^k$, $k=1,\cdots, n_i$, which use finite-time average consensus \cite{chen2012distributed}\cite{liang2017distributed} to estimate the averaged sum of $n_i\nabla_{\mathbf{x}_{ik}}f_{ij}(\mathbf{x})$ from $j=1$ to $j=n_i$  for $k=1,\cdots, n_i$, which equals to  $\sum_{r=1}^{n_i} \nabla_{\mathbf{x}_{ik}}f_{ir}(\mathbf{x})$. Observing from (\ref{al5a}) and (\ref{al5e}),  (\ref{al3a}) and (\ref{al3d}) are projected primal-dual dynamics using estimation variables at $k=j$ with the estimated value equal to $\sum_{r=1}^{n_i} \nabla_{\mathbf{x}_{ij}}f_{ir}(\mathbf{x})$.
	
	\textbf{Information Exchange of Algorithm  \ref{a2}}: The information exchange of the algorithm can be summarized as follows: 
	
	1) Suppose that $g_s(\mathbf{x}), \nabla_{\mathbf{x}_{ij}} g_s(\mathbf{x}), s=1,\cdots, p$, and ${\nabla_{\mathbf{x}_{ik}} f_{ij}(\mathbf{x})}, k=1,\cdots, n_i$ are directly available for agent $j$ in coalition $i$ at each time instant playing the game.
	
	2) The number of agents in coalition $i$, i.e.,  $n_i$,  and the average consensus gain in coalition $i$, i.e., $\gamma_i$ are known before playing the game for each agent in coalition $i$.
	
	3) Each agent communicates $\mathbf{y}_{ij}^k, k=1,\cdots, n_i$ with its neighbors in $\mathcal{G}_i$.

	\begin{remark}
		Despite the equivalence in mathematics, (\ref{al3}) and (\ref{al4}) are different in practical implementation. The algorithm in (\ref{al4}) requires a centralized coordinator to compute and broadcast the multiplier while  (\ref{al3})  doesn't. In (\ref{al3}), the initial values for multipliers of the same constraint are the same,  which guarantees the equivalence of  (\ref{al3}) and (\ref{al4}), and enables that the multipliers of each shared constraint are identical so that the algorithm can find a VE with analytical convergence.  
	\end{remark}
	
	\subsubsection{Convergence Analysis}
	In the following, we analyze the convergence of the proposed algorithm.
	
	According to \cite{fischer2013lasalle}, since the right-hand side of  (\ref{al3}) is Lebesgue measurable and essentially locally bounded, uniformly in $t$, a Filippov solution to the system exists. In the following sections, we omit the Filippov solution existence statement due to similarity.

	The following lemma guarantees that the action trajectories of the players are bounded, which is important in the convergence analysis.
	
	\begin{lemma}
		Under Assumptions  \ref{innercoal}-\ref{mono} and the initialization condition ${\mathbf{x}(0)}\in\Omega$, the dynamics in (\ref{al5a}) (and (\ref{al3a})) guarantee that ${\mathbf{x}}(t)\in\Omega$ for all $t>0$. \label{lemma1}
	\end{lemma}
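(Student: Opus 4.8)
The plan is to exploit the product structure $\Omega=\Omega_{1}\times\cdots\times\Omega_{N}$ (with each $\Omega_i$ decomposing into the $\Omega_{ij}$ onto which the dynamics project component-wise), so that it suffices to prove $\mathbf{x}_{ij}(t)\in\Omega_{ij}$ for every $i\in\mathcal{N}$ and $j\in\mathcal{N}_i$ separately, after which $\mathbf{x}(t)\in\Omega$ follows by concatenation. For a fixed agent I would introduce the squared-distance function $V_{ij}(t)=\tfrac{1}{2}\|\mathbf{x}_{ij}(t)-\mathcal{P}_{\Omega_{ij}}[\mathbf{x}_{ij}(t)]\|^2$, which is $C^1$ with gradient $\nabla V_{ij}=\mathbf{x}_{ij}-\mathcal{P}_{\Omega_{ij}}[\mathbf{x}_{ij}]$, is nonnegative, and vanishes exactly when $\mathbf{x}_{ij}\in\Omega_{ij}$. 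Since $\mathbf{x}(0)\in\Omega$, we have $V_{ij}(0)=0$, and the goal becomes to show $V_{ij}(t)\equiv 0$.

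The core computation is the time derivative of $V_{ij}$ along (\ref{al5a}). Writing $P=\mathcal{P}_{\Omega_{ij}}[\mathbf{x}_{ij}]$ and $P'=\mathcal{P}_{\Omega_{ij}}[\mathbf{x}_{ij}-\phi_{ij}]$ with $\phi_{ij}=\mathbf{y}_{ij}^j+\sum_{s=1}^p\lambda_s\nabla_{\mathbf{x}_{ij}}g_s(\mathbf{x})$, the dynamics read $\dot{\mathbf{x}}_{ij}=P'-\mathbf{x}_{ij}$, so the chain rule gives $\dot{V}_{ij}=(\mathbf{x}_{ij}-P)^T(P'-\mathbf{x}_{ij})$. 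Decomposing $P'-\mathbf{x}_{ij}=(P'-P)-(\mathbf{x}_{ij}-P)$ yields $\dot{V}_{ij}=(\mathbf{x}_{ij}-P)^T(P'-P)-\|\mathbf{x}_{ij}-P\|^2$. To control the cross term I would invoke the second item of Lemma \ref{projector}, applied to the projection of $\mathbf{x}_{ij}$ onto $\Omega_{ij}$ with test point $y=P'\in\Omega_{ij}$: this gives $(P'-P)^T(P-\mathbf{x}_{ij})\ge 0$, equivalently $(\mathbf{x}_{ij}-P)^T(P'-P)\le 0$. Hence $\dot{V}_{ij}\le-2V_{ij}$, and Gr\"onwall's inequality together with $V_{ij}(0)=0$ and $V_{ij}\ge 0$ forces $V_{ij}(t)=0$ for all $t\ge 0$, i.e. $\mathbf{x}_{ij}(t)\in\Omega_{ij}$. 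The same conclusion holds verbatim for (\ref{al3a}), because the established identity $\lambda_{sij}=\lambda_s$ makes its $\mathbf{x}_{ij}$-trajectory coincide with that of (\ref{al5a}).

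The main technical point to handle carefully is that (\ref{al5a}) is part of a system with a discontinuous right-hand side, so one works with Filippov (absolutely continuous) solutions rather than classical ones. Because $V_{ij}$ is continuously differentiable, $t\mapsto V_{ij}(t)$ is absolutely continuous and the chain rule $\dot{V}_{ij}=\nabla V_{ij}^{T}\dot{\mathbf{x}}_{ij}$ holds for almost every $t$; crucially, the cross-term inequality obtained from Lemma \ref{projector} is valid pointwise for whatever value the vector $\phi_{ij}$ takes along the solution, so it requires no continuity of $\phi_{ij}$ and the bound $\dot{V}_{ij}\le-2V_{ij}$ survives the Filippov setting. The only structural hypotheses actually consumed are that each $\Omega_{ij}$ is nonempty, closed and convex (so that $\mathcal{P}_{\Omega_{ij}}$ is well defined and Lemma \ref{projector} applies) and the initialization $\mathbf{x}(0)\in\Omega$; the convexity, smoothness and monotonicity assumptions on $f_i$ and $g_s$ are reserved for the later convergence analysis and are not needed for this invariance claim.
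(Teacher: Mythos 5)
Your proof is correct, but it follows a genuinely different route from the paper. The paper's argument (Appendix C) is a viability-theory one: it appeals to Theorem 3.1 of \cite{yi2016initialization} and to the Nagumo-type tangent-cone inclusion $\Omega_i\subset z+\mathcal{T}(z,\Omega_i)$ from \cite{aubin1984differential}, i.e., it checks that the vector field $\mathcal{P}_{\Omega_{ij}}[\cdot]-\mathbf{x}_{ij}$ points into the tangent cone on the boundary and then invokes forward invariance of convex sets for differential inclusions. You instead run a self-contained Lyapunov argument on the squared distance $V_{ij}=\tfrac12\operatorname{dist}(\mathbf{x}_{ij},\Omega_{ij})^2$, using only the $C^1$-smoothness of the Moreau-type distance function and the variational inequality in Lemma \ref{projector}-2) to get $\dot V_{ij}\le -2V_{ij}$, then Gr\"onwall. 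Both arguments are sound; yours has the advantages of being elementary (no external invariance theorem), of passing through the Filippov setting transparently (the bound is linear in the velocity, hence stable under the convexification defining the Filippov set-valued map, as you note), and of actually proving something slightly stronger, namely exponential attractivity of $\Omega$ rather than mere invariance — which is in the spirit of the \cite{liao2004neurodynamical} reference the paper also points to. The one hypothesis you correctly flag as implicit, that each $\Omega_{ij}$ is nonempty, closed and convex, is indeed not stated as a formal assumption in the paper but is required already for $\mathcal{P}_{\Omega_{ij}}$ to be well defined, so relying on it is consistent with the paper's usage.
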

	\begin{proof}
		See Appendix  \ref{ap3}. 
	\end{proof}
	
	Similarly, one can obtain the following conclusion which guarantees the non-negativeness of $\lambda_s(t)$.
	
	\begin{lemma}
		Under Assumptions  \ref{innercoal}-\ref{mono} and the initialization condition  ${\lambda}_{s}(0)=0$, the dynamics in (\ref{al5e}) guarantee that $\lambda_s(t)\geq 0$ for all $t\geq 0$. \label{lemma1_1}
	\end{lemma}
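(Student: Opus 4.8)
The plan is to mirror the argument already used for Lemma \ref{lemma1}, replacing the action set $\Omega_{ij}$ by the projection set $\mathbb{R}_{\geq 0}$ and exploiting that $\mathcal{P}_{\mathbb{R}_{\geq 0}}[y]=\max\{y,0\}$ always returns a non-negative scalar. Since this projection map is globally Lipschitz and, along any Filippov trajectory $\mathbf{x}(t)$ (which is absolutely continuous), $g_s(\mathbf{x}(t))$ is a continuous function of $t$ by the smoothness in Assumption \ref{assumption_convex}, the scalar dynamics (\ref{al5e}) admits an absolutely continuous solution $\lambda_s(t)$ whose sign I would track through a distance-type energy function.

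First I would introduce $V(\lambda_s)=\frac{1}{2}(\lambda_s-\mathcal{P}_{\mathbb{R}_{\geq 0}}[\lambda_s])^2$, namely half the squared distance from $\lambda_s$ to $\mathbb{R}_{\geq 0}$. Because $\mathcal{P}_{\mathbb{R}_{\geq 0}}[\lambda_s]=\max\{\lambda_s,0\}$, one has $\lambda_s-\mathcal{P}_{\mathbb{R}_{\geq 0}}[\lambda_s]=\min\{\lambda_s,0\}$, so $V\geq 0$, $V=0$ if and only if $\lambda_s\geq 0$, and $V$ is continuously differentiable with $\frac{dV}{d\lambda_s}=\min\{\lambda_s,0\}$. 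The initialization $\lambda_s(0)=0$ (indeed any $\lambda_s(0)\geq 0$) gives $V(\lambda_s(0))=0$.

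Next I would differentiate $V$ along (\ref{al5e}). Writing $\eta_s\triangleq\mathcal{P}_{\mathbb{R}_{\geq 0}}[\lambda_s+g_s(\mathbf{x})]\geq 0$, we have $\dot{\lambda}_s=\eta_s-\lambda_s$, so for almost all $t$, $\dot{V}=\min\{\lambda_s,0\}(\eta_s-\lambda_s)$. When $\lambda_s\geq 0$ this term vanishes; when $\lambda_s<0$ it equals $\lambda_s\eta_s-\lambda_s^2$, where $\lambda_s\eta_s\leq 0$ (a negative times a non-negative scalar) and $-\lambda_s^2\leq 0$. Hence $\dot{V}\leq 0$ for all $t$. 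Combining $V\geq 0$, $V(\lambda_s(0))=0$, and $\dot{V}\leq 0$ forces $V(\lambda_s(t))\equiv 0$, i.e. $\min\{\lambda_s(t),0\}=0$, which is exactly $\lambda_s(t)\geq 0$ for all $t\geq 0$.

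The only delicate point I expect is the non-smoothness at $\lambda_s=0$ together with the discontinuous $\mathbf{w}$-dynamics underlying the overall Filippov system. This is handled by observing that $V$ is $C^1$, so the chain rule for $\dot{V}$ is valid almost everywhere for the absolutely continuous $\lambda_s(t)$, and that $g_s(\mathbf{x}(t))$ enters (\ref{al5e}) only as a continuous forcing term; therefore the sign argument above is unaffected by the set-valued nature of the coupled system, and the conclusion follows as in the proof of Lemma \ref{lemma1}.
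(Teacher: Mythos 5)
Your proof is correct, but it takes a genuinely different route from the paper's. The paper gives no standalone argument for Lemma \ref{lemma1_1}: it asserts the result ``similarly'' to Lemma \ref{lemma1}, whose proof is in turn an invariance/viability argument borrowed from the differential-inclusion literature --- the right-hand side $\mathcal{P}_{\Omega}[x-v]-x$ lies in $\Omega-x\subseteq \mathcal{T}(x,\Omega)$, so a Nagumo-type theorem keeps the trajectory in the convex set. You instead run a direct Lyapunov argument on the squared distance $V(\lambda_s)=\tfrac12(\min\{\lambda_s,0\})^2$ to $\mathbb{R}_{\geq 0}$, verify that $V$ is $C^1$ with derivative $\min\{\lambda_s,0\}$, and show $\dot V\leq 0$ by splitting on the sign of $\lambda_s$ and using $\eta_s=\mathcal{P}_{\mathbb{R}_{\geq 0}}[\lambda_s+g_s(\mathbf{x})]\geq 0$; the computation is right, and in the case $\lambda_s<0$ you in fact get $\dot V\leq -2V$, so the estimate is even quantitative. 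Your route is more elementary and self-contained (no appeal to tangent cones or to \cite{aubin1984differential}), and it transparently covers any initial condition $\lambda_s(0)\geq 0$ --- which is worth having, since the algorithm actually initializes $\lambda_{sij}(0)=1$ while the lemma as stated says $\lambda_s(0)=0$. What the paper's approach buys is generality: the same citation-based argument handles the vector-valued projection onto arbitrary closed convex sets $\Omega_{ij}$ in Lemma \ref{lemma1}, whereas your scalar bookkeeping exploits the explicit form $\mathcal{P}_{\mathbb{R}_{\geq 0}}[y]=\max\{y,0\}$. One small simplification available to you: since the right-hand side of (\ref{al5e}) is continuous in $(\lambda_s,\mathbf{x})$ and $\mathbf{x}(t)$ is absolutely continuous, $\lambda_s(t)$ is actually $C^1$, so the ``almost everywhere'' caveats about the Filippov machinery are not needed for this particular equation.
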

	
	According to Lemma \ref{lemma1}, there exist positive constants $ c_{1}$, $c_2$, and $c_3$ such that 
	\begin{align}
		&\Vert \mathbf{x} \Vert\leq c_1, 	\Vert \dot{\mathbf{x}} \Vert\leq c_2, \notag \\
		&\left\Vert
		\frac{d(n_i{\nabla_{\mathbf{x}_{ik}} f_{ij}(\mathbf{x})})}{dt}\right\Vert_\infty\leq c_3,\forall i\in\mathcal{N}, j,k \in\mathcal{N}_i,\label{eqvi}
	\end{align}
	where $c_1$ and $c_2$ are related to an upper bound  of the set $\Omega$, and $c_3$ is related to an upper bound of the set $\Omega$, $n_i$,  and an upper bound  of some Hessian matrix entries within the set $\Omega$.
	
	Then, according to (\ref{eqvi}), the following conclusion  for the dynamics in (\ref{al5b}) and  (\ref{al5c}) holds.
	
	\begin{lemma}
		Under Assumptions  \ref{innercoal}-\ref{mono} and the initialization condition  $\mathbf{w}_{ij}^k(0)=0$, the dynamics in (\ref{al5b}) and  (\ref{al5c})  guarantee that if $\gamma_i>c_3$, then for all $t\geq T_1\triangleq\max_{i\in\mathcal{N}, k\in\mathcal{N}_i}\{ \frac{1}{2(\gamma_i-c_3)}\sum_{j=1}^{n_i}\sum_{r=1}^{n_i}a_{i,jr}\Vert\mathbf{y}_{ij}^k(0)-\mathbf{y}_{ir}^k(0) \Vert_\infty\}$, we have	$\mathbf{y}_{ij}^k(t)=\sum_{r=1}^{n_i}{\nabla_{\mathbf{x}_{ik}} f_{ir}(\mathbf{x})}$. \label{l6}
	\end{lemma}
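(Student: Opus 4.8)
The plan is to recognize that, for each fixed coalition index $i$ and each fixed estimation index $k$, the pair of equations (\ref{al5b})--(\ref{al5c}) is precisely an instance of the finite-time average consensus tracking dynamics of Lemma \ref{l2}, run on the communication graph $\mathcal{G}_i$. First I would fix $i$ and $k$ and set up the correspondence: the consensus state $x_j$ in Lemma \ref{l2} is identified with $\mathbf{y}_{ij}^k$, the integrator state $z_j$ with $\mathbf{w}_{ij}^k$, the gain $\alpha$ with $\gamma_i$, and the time-varying reference $r_j(t)$ with $n_i\nabla_{\mathbf{x}_{ik}} f_{ij}(\mathbf{x})$. To match the sign convention, I would use the oddness of the sign function, $\text{sgn}(\mathbf{y}_{ij}^k-\mathbf{y}_{ir}^k)=-\text{sgn}(\mathbf{y}_{ir}^k-\mathbf{y}_{ij}^k)$, together with the fact that $a_{i,jr}=1$ exactly when $r$ is a neighbor of $j$ in $\mathcal{G}_i$, to rewrite (\ref{al5c}) as $\dot{\mathbf{w}}_{ij}^k=\gamma_i\sum_{r\in\mathcal{V}_i}\text{sgn}(\mathbf{y}_{ir}^k-\mathbf{y}_{ij}^k)$, which is exactly the form required by Lemma \ref{l2}.

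Next I would verify the hypotheses of Lemma \ref{l2}. The graph $\mathcal{G}_i$ is undirected and connected by Assumption \ref{innercoal}; the integrator initialization $z_j(0)=\mathbf{w}_{ij}^k(0)=0$ holds by the stated initialization; the reference-derivative bound $|\dot{r}_j(t)|\le c$ follows from (\ref{eqvi}) with $c=c_3$, since Lemma \ref{lemma1} already guarantees that $\mathbf{x}(t)\in\Omega$ and hence that $\Vert d(n_i\nabla_{\mathbf{x}_{ik}} f_{ij}(\mathbf{x}))/dt\Vert_\infty\le c_3$; and the gain condition $\alpha>c$ is exactly the assumed $\gamma_i>c_3$. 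With these in place, Lemma \ref{l2} gives, for $t$ beyond its finite settling time, $\mathbf{y}_{ij}^k=\frac{1}{n_i}\sum_{r=1}^{n_i} r_r(t)=\frac{1}{n_i}\sum_{r=1}^{n_i} n_i\nabla_{\mathbf{x}_{ik}} f_{ir}(\mathbf{x})=\sum_{r=1}^{n_i}\nabla_{\mathbf{x}_{ik}} f_{ir}(\mathbf{x})$, which is the desired limit.

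Since the states $\mathbf{y}_{ij}^k$ are vector-valued while Lemma \ref{l2} is stated for scalars, I would apply the lemma componentwise: the sign function acts entrywise, so each scalar component of the vector system evolves independently and reaches its average in finite time. For each component $\ell$ the settling time is $\frac{1}{2(\gamma_i-c_3)}\sum_{j=1}^{n_i}\sum_{r=1}^{n_i} a_{i,jr}|[\mathbf{y}_{ij}^k(0)]_\ell-[\mathbf{y}_{ir}^k(0)]_\ell|$, and since $|[\cdot]_\ell|\le\Vert\cdot\Vert_\infty$ for every component, all components have settled by the time $\frac{1}{2(\gamma_i-c_3)}\sum_{j=1}^{n_i}\sum_{r=1}^{n_i} a_{i,jr}\Vert\mathbf{y}_{ij}^k(0)-\mathbf{y}_{ir}^k(0)\Vert_\infty$. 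Finally, since the argument applies to every pair $(i,k)$, taking the maximum of these per-pair settling times over $i\in\mathcal{N}$ and $k\in\mathcal{N}_i$ yields the uniform time $T_1$, after which $\mathbf{y}_{ij}^k(t)=\sum_{r=1}^{n_i}\nabla_{\mathbf{x}_{ik}} f_{ir}(\mathbf{x})$ holds for all agents simultaneously.

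The main obstacle here is not analytical but bookkeeping: carefully matching the sign convention and, above all, justifying the reference-derivative bound, which is where Assumptions \ref{fassumption}--\ref{mono} and the boundedness from Lemma \ref{lemma1} enter through (\ref{eqvi}). Once the scalar lemma is in hand, the componentwise extension and the maximization over the indices $(i,k)$ are routine.
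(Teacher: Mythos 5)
Your proof is correct and takes essentially the same route as the paper, which simply states that the lemma is a direct consequence of Lemma \ref{l2}; you have merely made explicit the identification of states, the sign convention, the derivative bound $c_3$ from (\ref{eqvi}) via Lemma \ref{lemma1}, the componentwise application to the vector case, and the maximization over $(i,k)$ that yields $T_1$.
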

	\begin{proof}
		It is a direct result of Lemma  \ref{l2}. 
	\end{proof}
	\begin{theorem}
		Under Assumptions  \ref{innercoal}-\ref{mono},  if $\gamma_i>c_3$, Algorithm  \ref{a2} guarantees that each Filippov solution $\mathbf{x}(t)$ converges to the unique VE of the game. \label{theorem1}
	\end{theorem}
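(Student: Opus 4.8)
The plan is to use the finite-time consensus to collapse the algorithm onto an exact projected primal--dual flow, and then run a Lyapunov/accumulation-point argument against the unique KKT pair. First I would invoke the equivalence of (\ref{al3}) and (\ref{al4}) together with Lemma \ref{l6}: for $t\ge T_1$ the estimation variables satisfy $\mathbf{y}_{ij}^j=\sum_{r=1}^{n_i}\nabla_{\mathbf{x}_{ij}}f_{ir}(\mathbf{x})=\nabla_{\mathbf{x}_{ij}}f_i(\mathbf{x})$, so (\ref{al5a}) becomes the exact projected pseudo-gradient step driven by $[\nabla_{\mathbf{x}_i}f_i(\mathbf{x})]_{i\in\mathcal{N}}=F(\mathbf{x})$. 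Stacking (\ref{al5a}) and (\ref{al5e}) over all $i,j,s$ yields the closed system $\dot{\mathbf{x}}=\mathcal{P}_{\Omega}[\mathbf{x}-(F(\mathbf{x})+\sum_{s}\lambda_s G_s(\mathbf{x}))]-\mathbf{x}$ and $\dot{\lambda}_s=\mathcal{P}_{\mathbb{R}_{\ge0}}[\lambda_s+g_s(\mathbf{x})]-\lambda_s$, whose equilibria are precisely the points satisfying the projection form (\ref{kkt}) of the KKT system. By Lemma \ref{lemma2} such an equilibrium is the unique VE $\mathbf{x}^*$ paired with multipliers $\lambda^*$, which exist since VI$(F,\Delta)$ has exactly one solution and Slater's condition holds (Assumption \ref{assumption_convex}).

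Next I would take the candidate Lyapunov function $V=\tfrac12\|\mathbf{x}-\mathbf{x}^*\|^2+\tfrac12\sum_{s=1}^p(\lambda_s-\lambda_s^*)^2$ and differentiate it along Filippov solutions of the reduced system. The factors $(\mathbf{x}-\mathbf{x}^*)^T\dot{\mathbf{x}}$ and $\sum_s(\lambda_s-\lambda_s^*)\dot{\lambda}_s$ are each estimated by applying part 2 of Lemma \ref{projector} with the comparison vector taken to be the equilibrium component ($\mathbf{x}^*\in\Omega$, $\lambda_s^*\ge0$). Using convexity of each $g_s$ (Assumption \ref{assumption_convex}) in the form $G_s(\mathbf{x})^T(\mathbf{x}^*-\mathbf{x})\le g_s(\mathbf{x}^*)-g_s(\mathbf{x})$, the coupling terms $\sum_s\lambda_s(\mathbf{x}^*-\mathbf{x})^TG_s(\mathbf{x})$ and $\sum_s(\lambda_s-\lambda_s^*)g_s(\mathbf{x})$ telescope down to $-\sum_s\lambda_s^* g_s(\mathbf{x})$, while the VI characterization of $\mathbf{x}^*$ with complementarity $\lambda_s^*g_s(\mathbf{x}^*)=0$ gives $(\mathbf{x}^*-\mathbf{x})^TF(\mathbf{x}^*)\le\sum_s\lambda_s^*g_s(\mathbf{x})$. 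Combining these with strict monotonicity of $F$ (Assumption \ref{mono}) I expect to reach $\dot{V}\le-(\mathbf{x}-\mathbf{x}^*)^T(F(\mathbf{x})-F(\mathbf{x}^*))\le0$; feasibility (Lemmas \ref{lemma1} and \ref{lemma1_1}) is used here, through the obtuse-angle inequality evaluated at the current iterate, to dispose of the residual projection terms.

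With $\dot{V}\le0$ in hand, $V$ is nonincreasing and bounded below, hence convergent, and $\lambda(t)$ stays bounded; together with $\mathbf{x}(t)\in\Omega$ and $\|\mathbf{x}\|\le c_1$ from Lemma \ref{lemma1} the whole trajectory is bounded. Integrating the decrease inequality shows $(\mathbf{x}-\mathbf{x}^*)^T(F(\mathbf{x})-F(\mathbf{x}^*))\to0$; since strict monotonicity makes this quantity strictly positive whenever $\mathbf{x}\ne\mathbf{x}^*$, every accumulation point of $\mathbf{x}(t)$ must equal $\mathbf{x}^*$. Invoking the Bolzano--Weierstrass property and the fact recorded in Section \ref{pre} that a bounded trajectory with a unique accumulation point converges to it, I conclude $\mathbf{x}(t)\to\mathbf{x}^*$, the unique VE.

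The step I expect to be the main obstacle is making the Lyapunov decrease rigorous for the nonsmooth, projected dynamics. The projection maps generate sign-indefinite velocity cross terms (such as $-\dot{\mathbf{x}}^T(F(\mathbf{x})+\sum_s\lambda_sG_s(\mathbf{x}))$ and $\dot{\lambda}^Tg(\mathbf{x})$), and showing that they do not destroy $\dot{V}\le0$ requires the obtuse-angle inequality of Lemma \ref{projector} applied carefully at the feasible current point, together with a set-valued (Filippov/Clarke) treatment of $\dot{V}$ rather than a naive pointwise derivative. A secondary difficulty is that only strict, not strong, monotonicity is assumed, so no convergence rate is available and $\lambda(t)$ itself need not converge; the passage from ``$V$ converges and the residual integrates to a finite value'' to pointwise convergence of $\mathbf{x}(t)$ must therefore be routed through the accumulation-point uniqueness argument rather than through a contraction estimate.
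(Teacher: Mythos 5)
Your overall architecture matches the paper's: reduce to the exact projected primal--dual flow for $t\ge T_1$ via Lemma \ref{l6}, establish $\dot V\le -(\mathbf{x}-\mathbf{x}^*)^T(F(\mathbf{x})-F(\mathbf{x}^*))\le 0$, then integrate, apply Barbalat, and finish with the accumulation-point/strict-monotonicity argument. The tail of your argument is exactly the paper's. The gap is in the middle, and it is the one you yourself flag as ``the main obstacle'': the quadratic function $V=\tfrac12\Vert\mathbf{x}-\mathbf{x}^*\Vert^2+\tfrac12\Vert\lambda-\lambda^*\Vert^2$ is \emph{not} a Lyapunov function for dynamics of the form $\dot\theta=\mathcal{P}_\Sigma[\theta-U(\theta)]-\theta$ when $U$ is merely monotone, and the obtuse-angle inequality cannot rescue it. Writing $u=\theta-U(\theta)$, $v=\mathcal{P}_\Sigma[u]$, the sharpest bound Lemma \ref{projector}-2) yields is $(\theta-\theta^*)^T\dot\theta\le -(\theta-\theta^*)^TU(\theta)+(\theta-v)^T(v-u)$, and applying Lemma \ref{projector}-2) ``at the current iterate'' ($y=\theta\in\Sigma$) shows the residual $(\theta-v)^T(v-u)$ is \emph{nonnegative} --- i.e., it has the harmful sign and cannot be discarded. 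A concrete instance: $F(x)=\epsilon x$ with $\epsilon=0.01$, $g(x)=x+1$, $\Omega$ a large interval, so $x^*=-1$, $\lambda^*=0.01$; at $(x,\lambda)=(-10,5)$ one computes $\dot x=-4.9$, $\dot\lambda=\mathcal{P}_{\mathbb{R}_{\ge0}}[-4]-5=-5$, hence $(x-x^*)\dot x+(\lambda-\lambda^*)\dot\lambda=44.1-24.95>0$, while $-(x-x^*)(F(x)-F(x^*))=-0.81<0$. So your claimed decrease inequality is false pointwise; the exact saddle-point cancellation you rely on is destroyed by the projection in the $\lambda$-dynamics.

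The paper circumvents this by taking Fukushima's regularized-gap-augmented function $V(\theta)=(\theta-H(\theta))^TU(\theta)-\tfrac12\Vert\theta-H(\theta)\Vert^2+\tfrac12\Vert\theta-\theta^*\Vert^2$ with $H(\theta)=\mathcal{P}_\Sigma[\theta-U(\theta)]$, which still dominates $\tfrac12\Vert\theta-\theta^*\Vert^2$. Its gradient (Theorem 3.2 of \cite{fukushima1992equivalent}) produces the extra term $W_4(\theta)=(H(\theta)-\theta)^T\mathcal{J}U(\theta)(H(\theta)-\theta)$, whose nonnegativity --- from monotonicity of $U$ and Proposition 2.3.2 of \cite{facchinei2007finite}, using the twice continuous differentiability in Assumptions \ref{fassumption}--\ref{assumption_convex} --- is precisely what absorbs the projection residual that defeats the plain quadratic function. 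This ingredient has no counterpart in your proposal and cannot be replaced by a more careful Filippov/Clarke treatment of the same quadratic $V$. Two further minor points: the paper must also bound $V(T_1)$ over the transient $[0,T_1]$ (where $\lambda_s(t)\le \lambda_s(0)+c_at$), which you omit; and your claim that the equilibria of the reduced flow are ``precisely'' the KKT points is fine but should be stated via (\ref{kkt}) rather than re-derived.
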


	\begin{proof}
		See Appendix  \ref{ap4}.
	\end{proof}

	\begin{remark}
		$\gamma_i$ depends on some global information on the constraint set, and all agents in the same coalition has to agree on its value beforehand. This restriction may be removed by replacing (\ref{al3b}) and (\ref{al3c}) with a fully distributed finite-time average consensus algorithm, while the analysis can remain unchanged due to the finite-time convergence property. 
	\end{remark}

	
	\subsection{Partial Decision Information}  \label{2main}
	
	In this section, motivated by \cite{ye2017distributed}, we design a distributed algorithm such that only partial (neighboring) decision information  is needed. We suppose that  inter-coalition communications are allowed for some agents (see Fig. \ref{fig:constraintdistributed}).
	
	Define the communication graph of all agents in the network as $\mathcal{G}$, where the node set of $\mathcal{G}$ is $\cup_{i\in\mathcal{N}}\mathcal{N}_i$ and the edge set of $\mathcal{G}$ is $\cup_{i\in\mathcal{N}} \mathcal{E}_i\cup \mathcal{E}_{c}$, where $\mathcal{E}_{c}$ is the set of all inter-coalition edges.

	The following assumption is needed for this section.
	
	\begin{assumption}
		\label{coali} There is an inter-coalition path from any coalition to any other coalition.
	\end{assumption}
	
	We can also directly assume that the communication graph $\mathcal{G}$ is connected, which is equivalent to Assumption  \ref{innercoal} plus Assumption \ref{coali}.
	
	\begin{figure} [htbp]
		\centering
		\includegraphics[width=0.7\linewidth]{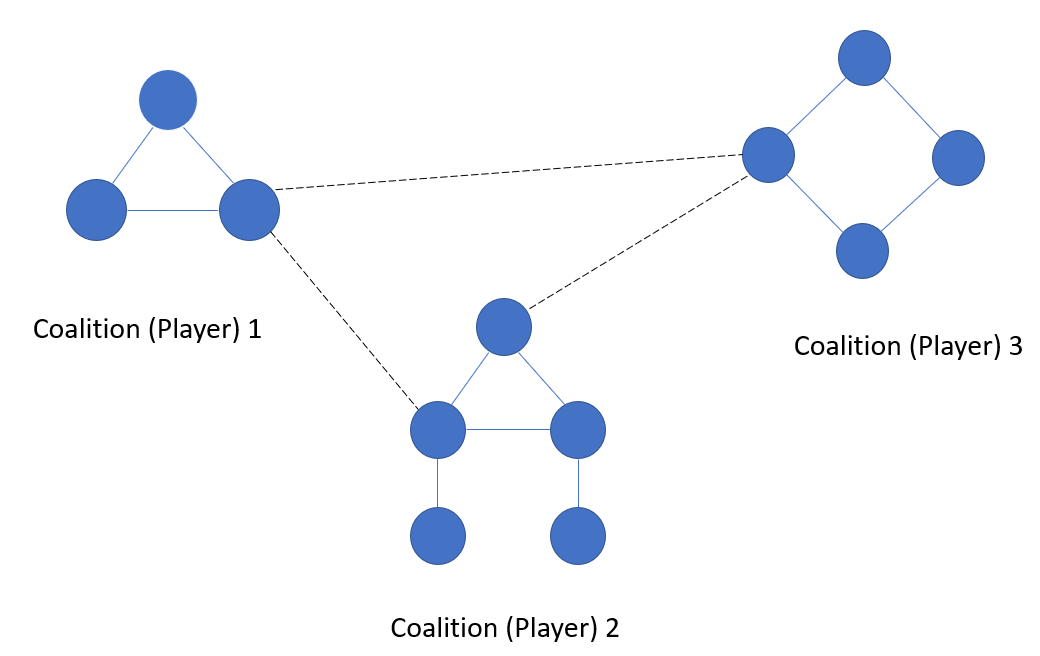}
		\caption{A generalized $N$-coalition game with partial decision information. The dashed line represents the inter-coalition communications.}
		\label{fig:constraintdistributed}
	\end{figure}
	
	For agent $j$ in coalition $i$, we define a subgraph of $\mathcal{G}$  as $\mathcal{G}_{ij}$, where the node set of $\mathcal{G}_{ij}$ is the same with $\mathcal{G}$, and the edge set is defined by replacing the bidirectional edges in $\mathcal{G}$ concerning agent $j$ in coalition $i$ with unidirectional edges starting from  agent $j$ in coalition $i$. The graph $\mathcal{G}_{ij}$ will be used later to estimate the information of agent $j$ in coalition $i$ for all the other agents.
	
	For $b\in\mathcal{N}\neq i$, let $a_{ij,bh}=1$ if there is an edge  in graph $\mathcal{G}$ connecting agent $h\in\mathcal{N}_i$ in coalition $b$ with agent $j$ in coalition $i$, and $a_{ij,bh}=0$ if there is no edge  in graph $\mathcal{G}$ connecting agent $h$ in coalition $b$ with agent $j$ in coalition $i$. If $b=i$, let $a_{ij,bh}=a_{i,jh}$, where $a_{i,jh}$ was defined in the last section. Denote $B_{ij}=\text{diag}\{a_{bh,ij}, {b\in\mathcal{N}, h\in\mathcal{N}_b}\}$. 
	
	Define a subgraph $\bar{\mathcal{G}}_{ij}$ (follower graph) of $\mathcal{G}_{ij}$, where the node set of $\bar{\mathcal{G}}_{ij}$ is $\cup_{i\in\mathcal{N}}\mathcal{N}_i/\{\text{agent $j$ in coalition $i$}\}$ and the edge set of $\bar{\mathcal{G}}_{ij}$ is defined by the edge set of $\mathcal{G}_{ij}$ minus the edge set of edges with agent $j$ in coalition $i$ being a node.
	
	Let $L_{ij}$ be the Laplacian matrix of $\bar{\mathcal{G}}_{ij}$ and define an information exchange matrix $H_{ij}=L_{ij}+B_{ij}$. According to the connectivity of $\mathcal{G}$, agent $j$ in coalition $i$ in $\mathcal{G}_{ij}$ is globally reachable from any  other agent. Then, based on Lemma 4 of \cite{hu2007leader}, $H_{ij}$ is  symmetric and positive definite.
	
	\subsubsection{Algorithm Design}
	
	The following algorithm is designed for the partial decision information scenario.
	
	\begin{breakablealgorithm} \label{a3}
		\caption{Partial Decision Information A}
		\hspace*{0.02in} \raggedright {\bf Initialization:}  For  $i=1,\cdots, N$, $j,k=1,\cdots, n_i$, $s=1,\cdots, p$, let  $\mathbf{w}_{ij}^k(0)=0$, ${\lambda}_{sij}(0)=1$, and ${\mathbf{x}}_{ij}(0)\in\Omega_{ij}$.
		
		\hspace*{0.02in} \raggedright {\bf Dynamics:} 
		\begin{subequations} \label{b1}
			\begin{align}
				\dot{\mathbf{x}}_{ij}&=\mathcal{P}_{\Omega_{ij}}[{\mathbf{x}}_{ij}-(\mathbf{y}_{ij}^j+\sum_{s=1}^p\lambda_{sij}\nabla_{\mathbf{x}_{ij}} g_s(\mathbf{z}_{ij}))]-{\mathbf{x}}_{ij}, \label{bl3a}\\
				\dot{\lambda}_{sij}&=\begin{cases}
					0, & t<T_2,   \\
					\mathcal{P}_{\mathbb{R}_{\geq 0}}[\lambda_{sij}+g_s(\mathbf{z}_{ij})]-\lambda_{sij}, & t\geq T_2, \label{bl3d}
				\end{cases}	 \\
				\mathbf{y}_{ij}^k&=\mathbf{w}_{ij}^k+n_i{\nabla_{\mathbf{x}_{ik}} f_{ij}(\mathbf{z}_{ij})},\label{bl3b}\\
				\dot{\mathbf{w}}_{ij}^k&=-\gamma_i\sum_{r=1}^{n_i} a_{i,jr} \text{sgn}(\mathbf{y}_{ij}^k-\mathbf{y}_{ir}^k), \label{bl3c}  \\
				\dot{\mathbf{z}}_{ij,wq}&=-\alpha\sum_{b\in\mathcal{N},h\in\mathcal{N}_b}a_{ij,bh}(\mathbf{z}_{ij, wq}-\mathbf{z}_{bh,wq}) \notag \\
				-\beta\text{sgn}&(\sum_{b\in\mathcal{N},h\in\mathcal{N}_b}a_{ij,bh}(\mathbf{z}_{ij, wq}-\mathbf{z}_{bh,wq})) \notag \\
				\text { for}& \text{ all }  w\in\mathcal{N}, q\in\mathcal{N}_w \text{ and }  [w,q]\neq [i,j], \label{finite}
			\end{align}
		\end{subequations}
	\end{breakablealgorithm} 
	where $\mathbf{z}_{ij}=[\mathbf{z}_{ij,wq}]_{w\in\mathcal{N}, q\in\mathcal{N}_w}$ with $\mathbf{z}_{ij,ij}=\mathbf{x}_{ij}$, and $T_2$ is a time instant that will be determined later.
	
	Compared with Algorithm \ref{a2}, the idea of Algorithm \ref{a3} is for each agent, we use a finite-time consensus algorithm in (\ref{finite}) to estimate each other agents' action (which is bounded according to the same analysis in Lemma \ref{lemma1}), since only partial decision information is available. Meanwhile, due to the boundedness of the dynamics in  (\ref{finite}),  the dynamics (\ref{bl3b}) and  (\ref{bl3c}) converge to the average of the gradients in a finite time.  After the finite-time convergence of both dynamics,  (\ref{bl3a}) and (\ref{bl3d}) use projected primal-dual dynamics to seek the VE. Note that to guarantee that the multipliers for the same constraint are identical all the time, in (\ref{bl3d}), all the multipliers do not change their values before $T_2$. Here, $T_2$ is the settling time of the finite-time algorithm in (\ref{finite}).
	
	\textbf{Information Exchange of Algorithm  \ref{a3}}: 
	
	1) Suppose that the functions $g_s(\cdot), \nabla_{\mathbf{x}_{ij}} g_s(\cdot), s=1,\cdots, p$ and ${\nabla_{\mathbf{x}_{ik}} f_{ij}(\cdot)}, k=1,\cdots, n_i$ are available for agent $j$ in coalition $i$.
	
	2) The number of agents in coalition $i$, i.e.,  $n_i$,  and the average consensus gain in coalition $i$, i.e., $\gamma_i$ are known for each agent in coalition $i$ before playing the game. The constant $T_2$ and the consensus tracking gains $\alpha$ and $\beta$ are known for all agents in the game. 
	
	3) Each agent communicates $\mathbf{y}_{ij}^k, k=1,\cdots, n_i$ with its neighbors in $\mathcal{G}_i$, and $\mathbf{z}_{ij, wq}, w\in\mathcal{N}, q\in\mathcal{N}_w, [w,q]\neq [i,j]$ with its neighbors in $\mathcal{G}$. 
	
	\subsubsection{Convergence Analysis}
	
	The convergence of the algorithm is illustrated as follows.
	\begin{theorem}
		Suppose that Assumptions  \ref{innercoal}-\ref{coali} hold. Algorithm \ref{a3}  guarantees that each Filippov solution $\mathbf{x}(t)$ converges to the unique VE of the game, provided that $\alpha\geq 0$,  $\beta>c_2$, $T_2\geq \max_{w\in\mathcal{N}, q\in\mathcal{N}_w}\{ {\frac{\lambda_{max}(H_{wq})}{\lambda_{min}(H_{wq})}}\frac{1}{\beta-c_2}\sum_{i=1}^N\sum_{j=1}^{n_i}\Vert\mathbf{z}_{ij, wq}(0)-\mathbf{x}_{wq}(0)\Vert_\infty\}$, and $\gamma_i>c_4$, where $c_2$ was defined in  (\ref{eqvi}), and $c_4$ is a positive constant defined in the proof (in (\ref{d1})). \label{theorem3}
	\end{theorem}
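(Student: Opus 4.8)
The plan is to show that Algorithm \ref{a3} reduces, after a finite time, to the full decision information dynamics of Algorithm \ref{a2}, so that the convergence conclusion of Theorem \ref{theorem1} can be invoked directly. Concretely, I would establish a three-stage finite-time cascade: first the action estimates $\mathbf{z}_{ij}$ become exact, then the gradient estimates $\mathbf{y}_{ij}^k$ settle to the correct averaged sums, and finally the primal--dual dynamics (\ref{bl3a})--(\ref{bl3d}) coincide with the auxiliary full-information system (\ref{al4}). Before this, I would repeat the argument of Lemma \ref{lemma1} verbatim: since (\ref{bl3a}) has the same projection structure, $\mathbf{x}(t)\in\Omega$ for all $t$, and hence the a priori bounds (\ref{eqvi}) hold, in particular $\Vert\dot{\mathbf{x}}\Vert\leq c_2$ and thus $\Vert\dot{\mathbf{x}}_{wq}\Vert\leq c_2$ for every index pair.

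For the first stage, I would observe that for each fixed pair $[w,q]$ the collection $\{\mathbf{z}_{ij,wq}\}_{i\in\mathcal{N},j\in\mathcal{N}_i}$ governed by (\ref{finite}) is precisely a leader--follower finite-time consensus tracking system on $\mathcal{G}_{wq}$ whose leader is agent $(w,q)$ itself, since $\mathbf{z}_{wq,wq}=\mathbf{x}_{wq}$. Applying Lemma \ref{l3} component-wise with $H=H_{wq}$ and tracked signal $\mathbf{x}_{wq}$, and using $\beta>c_2$, gives $\mathbf{z}_{ij,wq}(t)=\mathbf{x}_{wq}(t)$ for all $t\geq T_2$; the stated $T_2$ dominates the settling time of Lemma \ref{l3} via $\sqrt{\tilde{x}^T H_{wq}\tilde{x}}\leq\sqrt{\lambda_{\max}(H_{wq})}\Vert\tilde{x}\Vert$ together with $\Vert\tilde{x}\Vert\leq\sum_{ij}\Vert\mathbf{z}_{ij,wq}(0)-\mathbf{x}_{wq}(0)\Vert_\infty$. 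Hence $\mathbf{z}_{ij}(t)=\mathbf{x}(t)$ for every agent once $t\geq T_2$. The subtle point here, which I expect to be the \emph{main obstacle}, is the apparent circularity: the leader signal $\mathbf{x}_{wq}$ is itself driven by (\ref{bl3a}), which uses the very estimates $\mathbf{z}_{wq}$ that are still converging. This is resolved by noting that $\Vert\dot{\mathbf{x}}_{wq}\Vert\leq c_2$ follows directly from the projection in (\ref{bl3a}) and holds unconditionally, irrespective of whether the estimates have converged; thus the hypothesis $|\dot{x}_0|\leq d$ of Lemma \ref{l3} is met a priori with $d=c_2$, and the cascade is well posed.

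For the second stage, I would apply Lemma \ref{l2} to the average-consensus dynamics (\ref{bl3b})--(\ref{bl3c}), whose tracked signal is $r_{ij}^k(t)=n_i\nabla_{\mathbf{x}_{ik}}f_{ij}(\mathbf{z}_{ij}(t))$. Using the uniform-in-time tracking-error bound in Lemma \ref{l3}, the estimates $\mathbf{z}_{ij}$ stay bounded for all $t$, so on $[0,T_2)$ the velocity $\dot{\mathbf{z}}_{ij}$ is bounded by a constant determined by $\alpha$, $\beta$ and the node degrees in $\mathcal{G}$; combined with the boundedness of the Hessian entries on $\Omega$ this yields $|\dot r_{ij}^k|\leq c_4$ for all $t$, with $c_4$ the positive constant in the theorem hypothesis, reducing to the full-information bound for $t\geq T_2$. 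With $\gamma_i>c_4$, Lemma \ref{l2} gives finite-time average consensus, and combining this with the exactness $\mathbf{z}_{ij}=\mathbf{x}$ for $t\geq T_2$ shows there is a finite time past which $\mathbf{y}_{ij}^j(t)=\sum_{r=1}^{n_i}\nabla_{\mathbf{x}_{ij}}f_{ir}(\mathbf{x})$. I would also stress the role of the multiplier freezing in (\ref{bl3d}): because the $g_s(\mathbf{z}_{ij})$ differ across agents before $T_2$, holding $\lambda_{sij}$ frozen at the common value $1$ until $T_2$ guarantees that all multipliers of a given constraint stay identical, so that $\lambda_{sij}(t)=\lambda_s(t)$ for every $i,j$ once the dynamics switch on.

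Finally, for $t$ past both settling times, $g_s(\mathbf{z}_{ij})=g_s(\mathbf{x})$, $\nabla_{\mathbf{x}_{ij}}g_s(\mathbf{z}_{ij})=\nabla_{\mathbf{x}_{ij}}g_s(\mathbf{x})$, and $\mathbf{y}_{ij}^j$ equals the exact averaged gradient, so (\ref{bl3a}) and (\ref{bl3d}) become identical to the auxiliary system (\ref{al4}) with stacked gradient map $F(\mathbf{x})=[\nabla_{\mathbf{x}_i}f_i(\mathbf{x})]_{i\in\mathcal{N}}$ and synchronized multipliers. Since these dynamics are time-invariant, I would apply the analysis underlying Theorem \ref{theorem1} (Appendix \ref{ap4}) with initial time equal to the common settling time and initial state $\mathbf{x}\in\Omega$, $\lambda_s\geq 0$, to conclude that $\mathbf{x}(t)$ converges to the unique VE, whose uniqueness is guaranteed by the strict monotonicity of $F$ in Assumption \ref{mono}.
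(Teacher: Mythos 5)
Your proposal is correct and follows essentially the same three-stage cascade as the paper's own proof: Lemma \ref{l3} applied per coordinate pair $[w,q]$ (with the unconditional bound $\Vert\dot{\mathbf{x}}_{wq}\Vert\leq c_2$ from the projection structure resolving the apparent circularity) gives $\mathbf{z}_{ij}=\mathbf{x}$ after $T_2$, the uniform tracking-error bound plus boundedness of $\dot{\mathbf{z}}_{ij}$ yields the constant $c_4$ in (\ref{d1}) so that Lemma \ref{l2} gives finite-time exactness of $\mathbf{y}_{ij}^k$, and the dynamics then coincide with those analyzed in Theorem \ref{theorem1}. Your added remarks on the multiplier freezing before $T_2$ and on why the leader--follower tracking is well posed are correct elaborations of points the paper leaves implicit, not a different argument.
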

	
	\begin{proof}
		See Appendix \ref{ap6}.
	\end{proof}
	
	\begin{remark}
		The distributed algorithm requires the knowledge of $T_2$, which depends on some global information. In practice, we can select an arbitrary $T_2$ and a sufficiently large $\beta$ such that the condition holds. Fully distributed finite-time consensus algorithms and prescribed-time consensus algorithms may also help to remove the relevant restrictions.
	\end{remark}

	\section{GNE Seeking of Generalized $N$-coalition Games with Distributive Constraint Information}
	
	For the distributive information scenario, as shown in Fig. \ref{fig:constraint}, the information of the $p$ constraint functions are distributively preserved by the agents in a coalition. Compared with the full constraint information model shown in Fig. \ref{fig:constraintfullinfo}, the model in this section is advantageous in the sense that the constraint information acquisition and processing for each individual are reduced.
	
	\begin{figure} [htbp]
		\centering
		\includegraphics[width=0.9\linewidth]{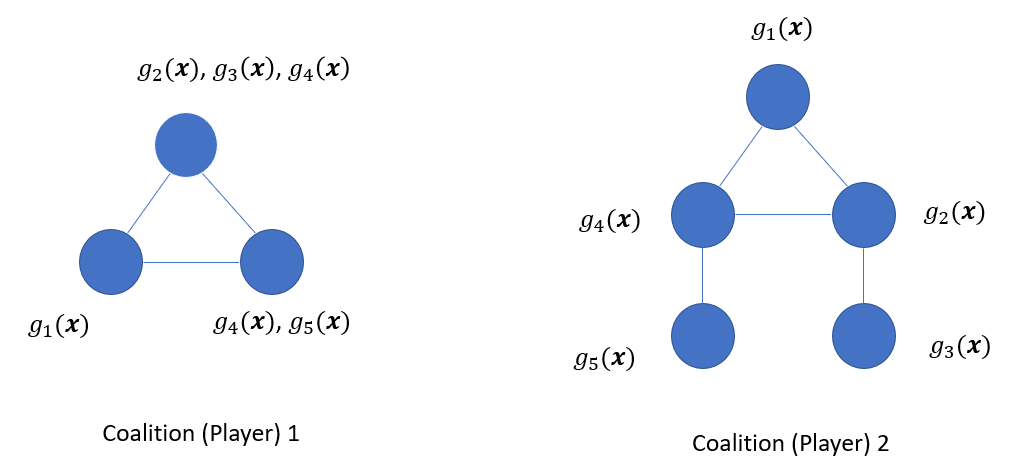}
		\caption{Generalized $N$-coalition games with distributive constraint information. Both players (coalitions) are subject to five constraints. But the constraints are distributively preserved and calculated by the agents in the coalition.}
		\label{fig:constraint}
	\end{figure}

	Denote $p_{ij}'\geq 0$ the number of the constraints owned by agent $j$ in coalition $i$. Let $g_{ij}(\mathbf{x})\in\mathbb{R}^{p_{ij}}$ represent the constraint-related vector of agent $j$ in coalition $i$. If $p_{ij}'=0$, then $p_{ij}=1$ and $g_{ij}(\mathbf{x})\equiv 0\in\mathbb{R}$;  otherwise, $p_{ij}=p_{ij}'$.  
	
	Let $g_{ijl}(\mathbf{x})$ be the $l$-th row ($l=1,\cdots,p_{ij}$) of $g_{ij}(\mathbf{x})$. $g_{ijl}(\mathbf{x})$ ($l=1,\cdots,p_{ij}$) are called  ``constraint-related functions'' to differentiate with the constraints $g_{s}(\mathbf{x})$ ($s=1,\cdots,p$).

	We first propose a full decision information algorithm, followed by a partial decision information algorithm.
	
	\subsection{Full Decision Information}
	
	\subsubsection{Algorithm Design}
	
	Since $g_s(\mathbf{x})$ is unavailable for some agents, Algorithm \ref{a2} cannot be used.
	
	The algorithm can be designed as follows.
	
	\begin{breakablealgorithm} \label{a1}
		\caption{Full Decision Information B}
		\hspace*{0.02in} \raggedright {\bf Initialization:}  For  $i=1,\cdots, N$, $j,k=1,\cdots, n_i$, and $l=1,\cdots, p_{ij}$, let $\mathbf{w}_{ij}^k(0)=0$, ${\lambda}_{ijl}(0)=1$,  and ${\mathbf{x}}_{ij}(0)\in\Omega_{ij}$.
		
		\hspace*{0.02in} \raggedright {\bf Dynamics:} 
		\begin{subequations} \label{al6}
			\begin{align}
				\dot{\mathbf{x}}_{ij}&=\mathcal{P}_{\Omega_{ij}}[{\mathbf{x}}_{ij}-\mathbf{y}_{ij}^j]-{\mathbf{x}}_{ij}, \label{al3a1}\\
				\mathbf{y}_{ij}^k&=\mathbf{w}_{ij}^k+n_i{\nabla_{\mathbf{x}_{ik}} f_{ij}(\mathbf{x})}+\notag \\
				& \sum_{l=1}^{p_{ij}} \frac{n_i}{q_{ijl}} \lambda_{ijl} {\nabla_{\mathbf{x}_{ik}} g_{ijl}(\mathbf{x})},\label{al3b1}\\
				\dot{\mathbf{w}}_{ij}^k&=-\gamma_i\sum_{r=1}^{m_i} a_{i,jr} \text{sgn}(\mathbf{y}_{ij}^k-\mathbf{y}_{ir}^k), \label{al3c1} \\
				\dot{\lambda}_{ijl}&=\mathcal{P}_{\mathbb{R}_{\geq 0}}[\lambda_{ijl}+g_{ijl}(\mathbf{x})]-\lambda_{ijl},  \label{al3d1}
			\end{align}
		\end{subequations}
	\end{breakablealgorithm}
	where $q_{ijl}$ is the number of repeated constraints in coalition $i$, i.e., if  $g_{ijl}(\mathbf{x})=g_{s}(\mathbf{x})$ for some $s\in\{1,\cdots,p\}$, then $q_{ijl}=q_{is}\triangleq\text{card}\{S_{is}\}$ with $S_{is}=\{g_{ijl}(\mathbf{x})| j\in\mathcal{N}_i, l=1,\cdots, p_{ij}, g_{ijl}(\mathbf{x})=g_{s}(\mathbf{x})\}$, and $q_{ijl}=1$ if $g_{ijl}(\mathbf{x})\neq g_{s}(\mathbf{x})$ for all $s\in\{1,\cdots,p\}$.

	Similarly, define an auxiliary system 
	\begin{align}
		\dot{\lambda}_{s}&=\mathcal{P}_{\mathbb{R}_{\geq 0}}[\lambda_{s}+g_s(\mathbf{x})]-\lambda_{s}, s=1,\cdots, p, \label{al3r12}
	\end{align}
	with  ${\lambda}_{s}(0)=1$.
	
	Then, the trajectory of $\mathbf{x}_{ij}$ given in (\ref{al6}) is mathematically equivalent to the trajectory of $\mathbf{x}_{ij}$ in the following auxiliary system: 
	\begin{subequations} \label{al7}
		\begin{align}
			\dot{\mathbf{x}}_{ij}&=\mathcal{P}_{\Omega_{ij}}[{\mathbf{x}}_{ij}-\mathbf{y}_{ij}^j]-{\mathbf{x}}_{ij}, \label{al7a1}\\
			\mathbf{y}_{ij}^k&=\mathbf{w}_{ij}^k+n_i{\nabla_{\mathbf{x}_{ik}} f_{ij}(\mathbf{x})}\notag \\
			&+ \sum_{s=1}^p\frac{n_i}{q_{is}}\delta_{ijs}\lambda_s\nabla_{\mathbf{x}_{ik}} g_{s} (\mathbf{x}),\label{al7b1}\\
			\dot{\mathbf{w}}_{ij}^k&=-\gamma_i\sum_{r=1}^{m_i} a_{i,jr}\text{sgn}(\mathbf{y}_{ij}^k-\mathbf{y}_{ir}^k), \label{al7c1} \\
			\dot{\lambda}_{s}&=\mathcal{P}_{\mathbb{R}_{\geq 0}}[\lambda_{s}+g_s(\mathbf{x})]-\lambda_{s},  \label{al7d1}
		\end{align}
	\end{subequations}
	where $\delta_{ijs}=1$ if $g_s(\mathbf{x})$ is available to agent $j$ in coalition $i$, and $\delta_{ijs}=0$ else-wise.  
	
	The following example shows how to determine the parameter $q_{ijl}$ in Algorithm \ref{a1}.
	
	\begin{example}
		Consider a 2-coalition game where each coalition is subject to two constraints $g_1=A_1\mathbf{x}-b_1$ and $g_2=A_2\mathbf{x}-b_2$. Agent 1 in coalition 1 has no knowledge of any constraint. Thus, $g_{111}\equiv0$. Agent 2 in coalition 1 has access to the constraint $g_1=A_1\mathbf{x}-b_1$ only, which implies that $g_{121}=A_1\mathbf{x}-b_1$. Player 3 in coalition 1 has access to both constraints. Then, $g_{131}=A_1\mathbf{x}-b_1$ and $g_{132}=A_2\mathbf{x}-b_2$. Therefore, $q_{11}=2$ and $q_{12}=1$. Furthermore, $q_{111}=1$,  $q_{121}=2$, $q_{131}=2$ and $q_{132}=1$.  
		
		If the constraints is disordered, for example, $g_{131}=A_2\mathbf{x}-b_2$ and $g_{132}=A_1\mathbf{x}-b_1$, the algorithm is still applicable only if $q_{131} $ and $q_{132} $ are known.   
	\end{example}

	According to  (\ref{al7b1}) and (\ref{al7c1}), we can see that the dynamics  (\ref{al3b1}) and (\ref{al3c1}) use finite-time average consensus \cite{chen2012distributed}\cite{liang2017distributed} to estimate the averaged sum of $n_i\nabla_{\mathbf{x}_{ik}}f_{ij}(\mathbf{x})+\sum_{s=1}^p\frac{n_i}{q_{is}}\delta_{ijs}\lambda_s\nabla_{\mathbf{x}_{ik}} g_{s} (\mathbf{x})$ from $j=1$ to $j=n_i$  for $k=1,\cdots, n_i$, which is equal to $\sum_{r=1}^{n_i} \nabla_{\mathbf{x}_{ik}}f_{ir}(\mathbf{x})+\sum_{s=1}^p\lambda_s\nabla_{\mathbf{x}_{ik}} g_{s} (\mathbf{x})$.   (\ref{al3a1}) and (\ref{al3d1}) are projected primal-dual dynamics using the estimation variables.
	
	\textbf{Information Exchange of Algorithm  \ref{a1}}: 
	
	1) Suppose that $g_{ijl}(\mathbf{x}), \nabla_{\mathbf{x}_{ik}} g_{ijl}(\mathbf{x})$, and ${\nabla_{\mathbf{x}_{ik}} f_{ij}(\mathbf{x})}$, $k=1,\cdots, n_i,l=1,\cdots, p_{ij}$ are directly available for agent $j$ in coalition $i$ at each time instant playing the game.
	
	2) The number of agents in coalition $i$, i.e.,  $n_i$, the average consensus gain in coalition $i$, i.e., $\gamma_i$ are known before playing the game for each agent in coalition $i$. The numbers $q_{ijl}, l=1,\cdots, p_{ij}$ are known for agent $j$ in coalition $i$.
	
	3) Each agent communicates $\mathbf{y}_{ij}^k, k=1,\cdots, n_i$ with its neighbors in $\mathcal{G}_i$.

	\subsubsection{Convergence Analysis}

	A difference between the dynamics in (\ref{al7b1}),  (\ref{al7c1}) and the dynamics in (\ref{al5b}), (\ref{al5c}) is that the average consensus tracked variable in each agent, i.e., $n_i\nabla_{\mathbf{x}_{ik}}f_{ij}(\mathbf{x})+\sum_{s=1}^p\frac{n_i}{q_{is}}\delta_{ijs}\lambda_s\nabla_{\mathbf{x}_{ik}} g_{s} (\mathbf{x})$, does not have a bounded derivative due to the existence of the multipliers. Despite that the multipliers are indeed bounded, we need to prove the boundedness before using it so as to avoid the loop issue. Note that the dynamics in (\ref{al7a1}) and (\ref{al7d1}) are also concerned with  (\ref{al7b1}) and  (\ref{al7c1}). Thus, the boundedness  cannot be directly obtained from (\ref{al7a1}) and (\ref{al7d1}) only.
	
	The sketch of the proof is described as follows: 1)   select an arbitrary time constant $T>0$; 2) calculate the upper bound of the multipliers in $[0, T]$ according to (\ref{al7d1}); 3) select a sufficiently large $\gamma_i$ (concerning $T$) such that  (\ref{al7b1}) and (\ref{al7c1}) are finite-time convergent in $[0, T]$ with a settling time $T_3< T$ based on the fact that the average consensus tracked variables are bounded in $[0, T]$;  4) by contradiction, prove that the average consensus tracked variables are bounded by the bound in $[0, T]$ for all time after $T$; 5) utilizing  Lemma \ref{l2} to prove the theorem.

	The convergence result is shown in the following theorem.
	
	\begin{theorem}
		Suppose that Assumptions  \ref{innercoal}-\ref{mono} hold.  Let $T$ be an arbitrary positive constant. If $\gamma_i>c_5(T)$ and $T_3\triangleq\max_{i\in\mathcal{N}, k\in\mathcal{N}_i}\{ \frac{1}{2(\gamma_i-c_5(T))}\sum_{j=1}^{n_i}\sum_{r=1}^{n_i}a_{i,jr}\Vert\mathbf{y}_{ij}^k(0)-\mathbf{y}_{ir}^k(0)\Vert_\infty\}$ $<T$ where $c_5(T)>0$ is a constant dependent on $T$, then Algorithm  \ref{a1} guarantees that each Filippov solution $\mathbf{x}(t)$ converges to the unique VE of the game.  \label{t2}
	\end{theorem}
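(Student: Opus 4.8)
The plan is to follow the five-step outline stated just before the theorem, whose crux is to break the apparent circular dependence between boundedness of the multipliers and finite-time convergence of the estimation dynamics (\ref{al7b1})--(\ref{al7c1}). First I would record the facts that hold unconditionally for every Filippov solution. By the projection structure of (\ref{al7a1}) the argument of Lemma \ref{lemma1} applies verbatim, so $\mathbf{x}(t)\in\Omega$ for all $t$; moreover, since $\dot{\mathbf{x}}_{ij}=\mathcal{P}_{\Omega_{ij}}[\,\cdot\,]-\mathbf{x}_{ij}$ is a difference of two points of the compact set $\Omega_{ij}$, the bound $\Vert\dot{\mathbf{x}}\Vert\le c_2$ of (\ref{eqvi}) holds independently of the size of the $\lambda_s$. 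From (\ref{al7d1}) and $\lambda_s\ge 0$ (as in Lemma \ref{lemma1_1}), the active branch gives $\dot\lambda_s=g_s(\mathbf{x})$ and the inactive branch $\dot\lambda_s=-\lambda_s\le 0$, so $|\dot\lambda_s|\le G:=\max_{\mathbf{x}\in\Omega}\Vert g(\mathbf{x})\Vert_\infty$ at all times, whence the crude \emph{a priori} estimate $\lambda_s(t)\le 1+Gt$.

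Next I would identify the target of the averaging. Writing $r_{ij}^k:=n_i\nabla_{\mathbf{x}_{ik}}f_{ij}(\mathbf{x})+\sum_{s}\frac{n_i}{q_{is}}\delta_{ijs}\lambda_s\nabla_{\mathbf{x}_{ik}}g_s(\mathbf{x})$ for the quantity tracked in (\ref{al7b1}), the counting identity $\sum_{j}\delta_{ijs}=q_{is}$ (the $q_{is}$ agents holding constraint $s$ each contribute $1$) yields $\frac{1}{n_i}\sum_{j}r_{ij}^k=\sum_{r}\nabla_{\mathbf{x}_{ik}}f_{ir}(\mathbf{x})+\sum_{s}\lambda_s\nabla_{\mathbf{x}_{ik}}g_s(\mathbf{x})$, so once Lemma \ref{l2} delivers agreement, $\mathbf{y}_{ij}^j$ equals exactly the driving term used in Algorithm \ref{a2}. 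Differentiating $r_{ij}^k$, every summand is bounded by a constant through $c_2$ and $G$ \emph{except} $\lambda_s\,\nabla^2 g_s(\mathbf{x})\dot{\mathbf{x}}$, which is only bounded by $\lambda_s$ times constants. This single term is precisely what obstructs a \emph{constant} global bound on $\Vert\dot r\Vert$, and is the source of the loop: a global bound on $\lambda_s$ needs the exact primal--dual dynamics, which need the estimation to be exact, which needs $\Vert\dot r\Vert$ constant-bounded.

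The heart of the argument resolves this with the free parameter $T$ in two stages, and this is the step I expect to be the main obstacle. On $[0,T]$ the crude estimate $\lambda_s\le 1+GT=:\Lambda(T)$ already makes $\Vert\dot r\Vert\le c_5(T)$ constant there. Independently of $\gamma_i$, I then invoke the Lyapunov analysis behind Theorem \ref{theorem1}: on any interval where the estimation is exact, the closed-loop $(\mathbf{x},\lambda)$-dynamics coincide with the auxiliary primal--dual system underlying (\ref{al4}), for which the Lyapunov function is nonincreasing. Evaluating it at $t=T_3$, where $\lambda(T_3)\le\Lambda(T_3)\le\Lambda(T)$ and $\mathbf{x}(T_3)\in\Omega$, furnishes a global multiplier bound $\lambda_s\le B'(T)$, valid throughout \emph{every} interval of exactness, that depends on $T$ only through $\Lambda(T)$ and \emph{not} on $\gamma_i$. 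The key point is exactly this $\gamma_i$-independence: it lets me fix $c_5(T)$ to be the bound on $\Vert\dot r\Vert$ corresponding to $\lambda_s\le B'(T)$ \emph{before} choosing $\gamma_i$, then pick $\gamma_i>c_5(T)$ and enlarge it so that $T_3<T$; with the hypotheses $\gamma_i>c_5(T)$ and $T_3<T$ in force, Lemma \ref{l2} applied on $[0,T]$ makes the estimation exact on $[T_3,T]$.

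Finally I would close the loop by contradiction, as in step~4. Let $t^\ast>T_3$ be the first instant at which exactness fails. On $[T_3,t^\ast)$ the system is the exact primal--dual system, so $\lambda_s\le B'(T)$ there, whence $\Vert\dot r\Vert\le c_5(T)<\gamma_i$ on $[T_3,t^\ast)$ and, by continuity, at $t^\ast$; but $\Vert\dot r\Vert<\gamma_i$ is precisely the reaching condition that keeps the sign-based dynamics (\ref{al7c1}) on the consensus manifold, so exactness cannot fail at $t^\ast$ --- a contradiction. Hence the estimation is exact for all $t\ge T_3$, the $(\mathbf{x},\lambda)$-dynamics coincide from $T_3$ onward with the system analyzed for Algorithm \ref{a2}, and Theorem \ref{theorem1} then gives convergence of $\mathbf{x}(t)$ to the unique VE. The only nonroutine ingredient is the decoupling in the third paragraph: one must extract $B'(T)$ from data available at $T_3$ that is itself controlled by the $\gamma_i$-free crude bound $\Lambda(T)$, so that $c_5(T)$ is well defined prior to selecting $\gamma_i$.
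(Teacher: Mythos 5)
Your proposal is correct and follows essentially the same route as the paper's proof: the a priori linear growth bound on the multipliers, the definition of $c_5(T)$ over a $\gamma_i$-independent set fixed before $\gamma_i$ is chosen, finite-time average consensus via Lemma \ref{l2}, the Lyapunov decrease on intervals of exactness, and a continuation-by-contradiction argument to push the derivative bound past $T$, concluding with the analysis of Theorem \ref{theorem1}. The only cosmetic differences are that the paper parametrizes the uniform bound through the sub-level set $\{V\le c_b+c_cT^2\}$ rather than through your pair $\Lambda(T)$, $B'(T)$ (of which one should formally take the maximum), and runs the contradiction on the first time the derivative bound $c_5(T)$ is attained rather than on the first time exactness fails.
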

	
	\begin{proof}
		See Appendix  \ref{ap5}.
	\end{proof}
	
	\begin{remark}
		It is observed that the parameter $\gamma_i$ satisfying the conditions always exists, since for any fixed $c_5(T)$, we can select $\gamma_i$ to be sufficiently large so that there must exist a solution to the condition. The parameter $c_5(T)$ depends on some global information such as the bound of the whole action space. In practice, one may select a sufficiently large $\gamma_i$ when it is difficult to determine the parameters.
	\end{remark}
	
	\begin{remark}
		A vital difference between Algorithm  \ref{a1} and Algorithm \ref{a2} is the place of the multiplier-related term, which  causes that the selection of parameter $\gamma_i$ in Theorem \ref{t2} is more strict than the selection in Theorem \ref{theorem1}.
	\end{remark}

	\subsection{Partial Decision Information}  
	
	\subsubsection{Algorithm Design}
	
	In this section, we consider the case where distributive constraint information is available. 
	
	The updating law is designed as follows.
	
	\begin{breakablealgorithm} \label{a4}
		\caption{Partial Decision Information B}
		\hspace*{0.02in} \raggedright {\bf Initialization:}  For  $i=1,\cdots, N$, $j,k=1,\cdots, n_i$, and $l=1,\cdots, p_{ij}$, let $\mathbf{w}_{ij}^k(0)=0$, ${\lambda}_{ijl}(0)=1$,  and ${\mathbf{x}}_{ij}(0)\in\Omega_{ij}$.
		
		\hspace*{0.02in} \raggedright {\bf Dynamics:} 
		\begin{subequations} \label{al8}
			\begin{align}
				\dot{\mathbf{x}}_{ij}&=\mathcal{P}_{\Omega_{ij}}[{\mathbf{x}}_{ij}-\mathbf{y}_{ij}^j]-{\mathbf{x}}_{ij}, \label{bl3a1}\\
				\mathbf{y}_{ij}^k&=\mathbf{w}_{ij}^k+n_i{\nabla_{\mathbf{x}_{ik}} f_{ij}(\mathbf{z}_{ij})}+\notag \\
				& \sum_{l=1}^{p_{ij}} \frac{n_i}{q_{ijl}} \lambda_{ijl}  {\nabla_{\mathbf{x}_{ik}} g_{ijl}(\mathbf{z}_{ij})},\label{bl3b1}\\
				\dot{\mathbf{w}}_{ij}^k&=-\gamma_i\sum_{r=1}^{n_i} a_{i,jr} \text{sgn}(\mathbf{y}_{ij}^k-\mathbf{y}_{ir}^k), \label{bl3c1} \\
				\dot{\lambda}_{ijl}& =\begin{cases} 
					0, & t<T_2,  \\
					\mathcal{P}_{\mathbb{R}_{\geq 0}}[\lambda_{ijl}+g_{ijl}(\mathbf{z}_{ij})]-\lambda_{ijl}, & t\geq T_2,  
				\end{cases} \label {lambdaa} \\ 
				\dot{\mathbf{z}}_{ij,wq}&=-\alpha\sum_{b\in\mathcal{N},h\in\mathcal{N}_b}a_{ij,bh}(\mathbf{z}_{ij, wq}-\mathbf{z}_{bh,wq}) \notag \\
				-\beta\text{sgn}&(\sum_{b\in\mathcal{N},h\in\mathcal{N}_b}a_{ij,bh}(\mathbf{z}_{ij, wq}-\mathbf{z}_{bh,wq})) \notag \\
				\text { for} & \text{ all }  w\in\mathcal{N}, q\in\mathcal{N}_w \text{ and }  [w,q]\neq [i,j], \label{finite3}
			\end{align}
		\end{subequations}
		
	\end{breakablealgorithm}
	where $\mathbf{z}_{ij}=[\mathbf{z}_{ij,wq}]_{w\in\mathcal{N}, q\in\mathcal{N}_w}$ with $\mathbf{z}_{ij,ij}=\mathbf{x}_{ij}$, and $T_2$ is a positive constant.

	\textbf{Information Exchange of Algorithm  \ref{a4}}: 
	
	1) Suppose that the functions $g_{ijl}(\cdot)$, $\nabla_{\mathbf{x}_{ik}} g_{ijl}(\cdot)$, and ${\nabla_{\mathbf{x}_{ik}} f_{ij}(\cdot)}$, $k=1,\cdots, n_i,l=1,\cdots, p_{ij}$ are  available for agent $j$ in coalition $i$.
	
	2) The number of agents in coalition $i$, i.e.,  $n_i$, the average consensus gain in coalition $i$, i.e., $\gamma_i$ are known before playing the game for each agent in coalition $i$. The number $q_{ijl}, l=1,\cdots, p_{ij}$ is known for agent $j$ in coalition $i$. The constant $T_2$ and the consensus tracking gains $\alpha$ and $\beta$ are known for all agents in the game. 
	
	3) Each agent communicates $\mathbf{y}_{ij}^k, k=1,\cdots, n_i$ with its neighbors in $\mathcal{G}_i$, and communicates $\mathbf{z}_{ij, wq}, w\in\mathcal{N}, q\in\mathcal{N}_w, [w,q]\neq [i,j]$ with its neighbors in $\mathcal{G}$. 
	
	\subsubsection{Convergence Analysis}
	
	The main result of this section can be summarized as follows.
	
	\begin{theorem}
		Suppose that Assumptions  \ref{innercoal}-\ref{coali} hold. Let $T$ be an arbitrary positive constant. The parameters are selected such that $\alpha\geq 0$,  $\beta>c_2$, $\max_{w\in\mathcal{N}, q\in\mathcal{N}_w}\{ {\frac{\lambda_{max}(H_{wq})}{\lambda_{min}(H_{wq})}}\frac{1}{\beta-c_2}\sum_{i=1}^N\sum_{j=1}^{n_i}\Vert\mathbf{z}_{ij, wq}(0)-\mathbf{x}_{wq}(0)\Vert_\infty\}\leq T_2<T$,  $\gamma_i>c_{6}(T,\alpha,\beta)$, and  $T_5\triangleq\max_{i\in\mathcal{N}, k\in\mathcal{N}_i}\{ \frac{1}{2(\gamma_i-c_{6}(T,\alpha, \beta))}\sum_{j=1}^{n_i}\sum_{r=1}^{n_i} a_{i,jr} \Vert\mathbf{y}_{ij}^k(0)-\mathbf{y}_{ir}^k(0) \Vert_\infty\} <T$, where $c_2$ was defined in  (\ref{eqvi}), and $c_6(T,\alpha,\beta)$ is a positive constant dependent on $T$, $\alpha$, and $\beta$. Then,  Algorithm  \ref{a4} guarantees that each Filippov solution $\mathbf{x}(t)$  converges to the unique VE of the game. \label{theorem4}
	\end{theorem}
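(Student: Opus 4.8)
The plan is to prove Theorem \ref{theorem4} by fusing the partial-decision-information argument behind Theorem \ref{theorem3} with the bootstrapping argument behind Theorem \ref{t2}, reducing Algorithm \ref{a4} to the already-analyzed full-information primal--dual flow once all finite-time transients have settled. \textbf{First}, I would establish the multiplier-consensus reduction. Because every multiplier is initialized at $\lambda_{ijl}(0)=1$ and, by \eqref{lambdaa}, is frozen on $[0,T_2)$, all the $\lambda_{ijl}$ attached to one shared constraint $g_s$ remain identical up to $T_2$; once released they obey the common law $\mathcal{P}_{\mathbb{R}_{\geq 0}}[\,\cdot\,+g_{ijl}(\mathbf{z}_{ij})]-(\cdot)$, so if I can show $\mathbf{z}_{ij}=\mathbf{x}$ for $t\geq T_2$ they stay identical and coincide with a single auxiliary multiplier $\lambda_s$ governed by \eqref{al7d1}. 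Together with the weight $\tfrac{n_i}{q_{ijl}}$, which makes the coalition-$i$ average of the constraint term collapse to $\sum_s\lambda_s\nabla_{\mathbf{x}_{ik}}g_s(\mathbf{x})$, this lets me replace Algorithm \ref{a4} by an auxiliary system in the spirit of \eqref{al7}, exactly as in the distributive full-information case.

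\textbf{Second}, I would handle the action estimators \eqref{finite3}. As in Lemma \ref{lemma1}, the projection dynamics \eqref{bl3a1} keep $\mathbf{x}(t)\in\Omega$ and give $\Vert\dot{\mathbf{x}}\Vert\leq c_2$; since $\beta>c_2$, Lemma \ref{l3} applies to each coordinate $\mathbf{z}_{ij,wq}$ (the follower graph $\bar{\mathcal G}_{ij}$ has the positive-definite $H_{ij}$), yielding exact tracking $\mathbf{z}_{ij,wq}(t)=\mathbf{x}_{wq}(t)$ beyond the settling time, which the chosen $T_2$ dominates. This is precisely why the multipliers are frozen until $T_2$: it guarantees that the dual flow never acts on inconsistent estimates. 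Lemma \ref{l3} also furnishes a uniform bound on $\Vert\mathbf{z}_{ij}-\mathbf{x}\Vert$ on the transient, so the right-hand side of \eqref{finite3} stays bounded in terms of $\alpha$ and $\beta$.

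\textbf{Third}, and this is the crux, I would close the circular dependence between the boundedness of the multipliers and the finite-time convergence of the averaging estimator \eqref{bl3b1}--\eqref{bl3c1}. The tracked signal now carries the term $\sum_l\tfrac{n_i}{q_{ijl}}\lambda_{ijl}\nabla_{\mathbf{x}_{ik}}g_{ijl}(\mathbf{z}_{ij})$, whose time derivative involves both $\dot\lambda_{ijl}$ and $\dot{\mathbf z}_{ij}$, so its rate is not a priori bounded and Lemma \ref{l2} cannot be invoked directly. Following the scheme of Theorem \ref{t2}, I would fix the arbitrary horizon $T$, use \eqref{lambdaa} (dual growth is at most linear since $g$ is bounded on $\Omega$) to bound the multipliers on $[0,T]$, and combine this with the $\alpha,\beta$-dependent bound on $\dot{\mathbf z}_{ij}$ to produce the constant $c_6(T,\alpha,\beta)$ bounding the derivative of the tracked signal on $[0,T]$. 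Choosing $\gamma_i>c_6$ makes \eqref{bl3c1} finite-time convergent with settling time $T_5<T$; a contradiction argument then propagates the derivative bound, and hence the exact averaging $\mathbf{y}_{ij}^k=\sum_r\nabla_{\mathbf{x}_{ik}}f_{ir}(\mathbf{x})+\sum_s\lambda_s\nabla_{\mathbf{x}_{ik}}g_s(\mathbf{x})$, to all $t$. I expect this bootstrapping to be the main obstacle, with the added difficulty over Theorem \ref{t2} that the estimator tracks quantities evaluated at $\mathbf{z}_{ij}$ rather than at $\mathbf{x}$, forcing $c_6$ to depend on the consensus gains and requiring the $\mathbf z$- and $\mathbf w$-transients to be ordered consistently inside $[0,T]$.

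\textbf{Finally}, for $t\geq\max\{T_2,T_5\}$ all estimators are exact, so \eqref{bl3a1} and \eqref{lambdaa} reduce verbatim to the projected primal--dual flow of Algorithm \ref{a2}. I would then invoke the convergence analysis of Theorem \ref{theorem1}: the reduced flow is a VI-based primal--dual dynamics whose equilibria are exactly the KKT points of Lemma \ref{lemma2}; boundedness, a LaSalle-type invariance argument, and the strict monotonicity of $F$ identify the limit as the unique VE of VI$(F,\Delta)$, completing the proof.
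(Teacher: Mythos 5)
Your proposal is correct and follows essentially the same route as the paper: reduce Algorithm \ref{a4} to a single-multiplier auxiliary system via the frozen-then-synchronized duals and the finite-time action tracking of \eqref{finite3} (the paper's Lemma \ref{fc}), then break the circular dependence between multiplier boundedness and the finite-time averaging of \eqref{bl3b1}--\eqref{bl3c1} by fixing an arbitrary horizon $T$, extracting $c_6(T,\alpha,\beta)$ from the quadratic-in-$t$ bound on the Lyapunov function, and propagating the derivative bound past $T$ by contradiction before handing off to the Theorem \ref{theorem1} analysis. The only cosmetic deviation is your mention of a LaSalle-type invariance argument for the final step, where the paper uses Barbalat's lemma plus an accumulation-point argument, but since you defer to the analysis of Theorem \ref{theorem1} this is not a substantive difference.
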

	
	\begin{proof}
		See Appendix \ref{ap7}
	\end{proof}
	
	\begin{remark} 
		The parameters satisfying Theorem \ref{theorem4} always exist. For an arbitrary $T$, we can always select a sufficiently large $\beta$ such that $T_2$ exists. Then, $\gamma_i$ can be determined according to $\beta$.
	\end{remark}

	\section{Numerical Simulation} \label{simu}

	In this section, we consider an example where three companies are competing for customers by deciding the amount of a commodity. Each company (coalition) has one or multiple subsidiary companies (agents) coordinating with each other on the amount of the commodity that each subsidiary company needs to produce, as shown in Fig. \ref{fig:coalitionexample}. The amount of the commodity that a subsidiary produces is denoted by $x_{ij}$, $i=1,2,3, j=1,\cdots, n_i$ with $n_1=1, n_2=3, n_3=6$. 
	
	\begin{figure} [htbp]
		\centering
		\includegraphics[width=0.8\linewidth]{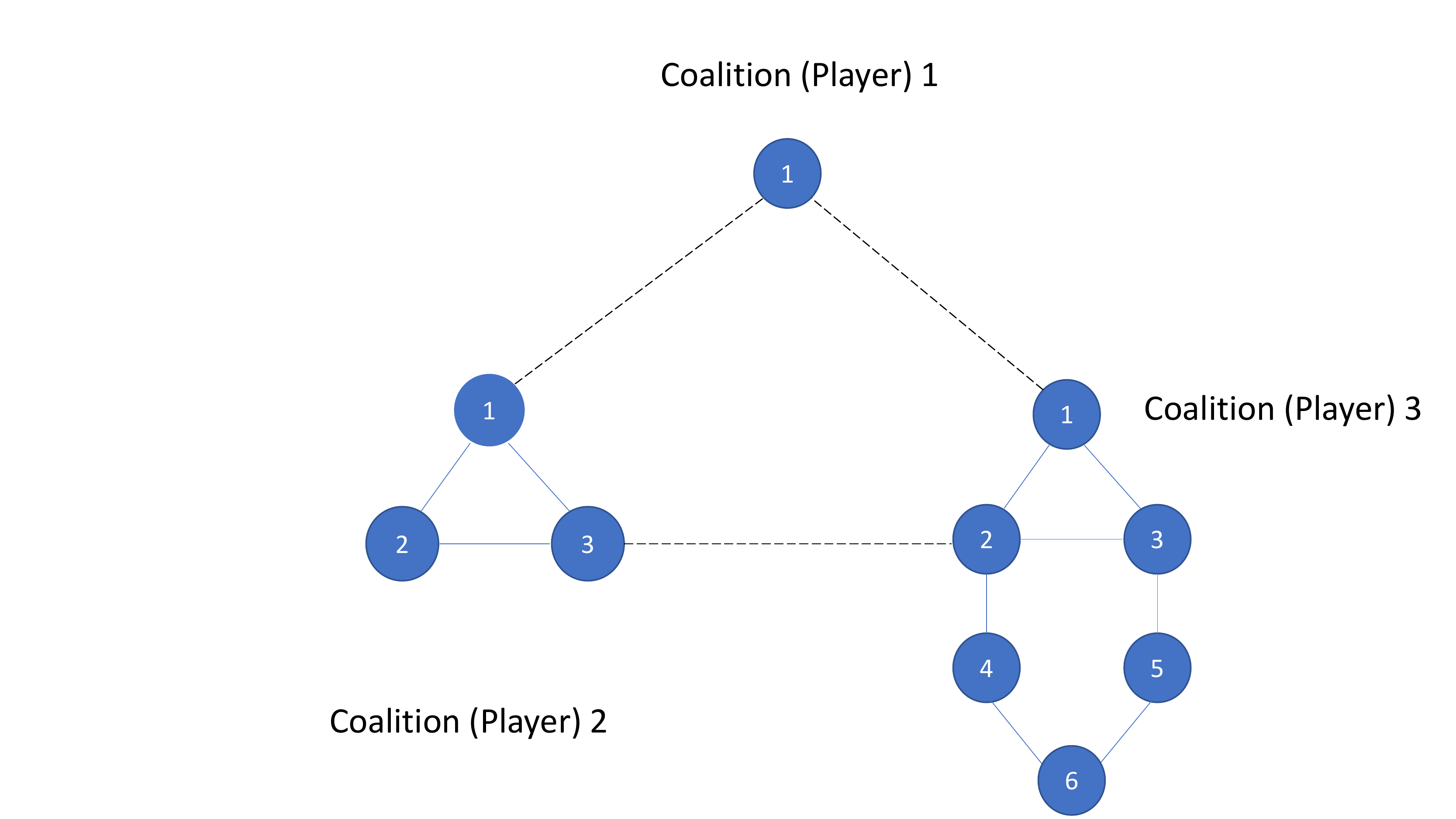}
		\caption{The graph topology.}
		\label{fig:coalitionexample}
	\end{figure}
	
	The local objective function of agent $j$ in coalition $i$ can be described as $f_{ij}(\mathbf{x})=c_{ij}(\mathbf{x}_{ij})-p_{ij}(\mathbf{x})\mathbf{x}_{ij}$, where $c_{ij}(\mathbf{x}_{ij})=s_{ij}\mathbf{x}_{ij}^2+r_{ij}\mathbf{x}_{ij}$ is the cost function,  $p_{ij}(\mathbf{x})=d_{ij}-\mathbf{u}_{ij}^T\mathbf{x}$ denotes the price function,  $s_{ij}, r_{ij}, d_{ij}\in\mathbb{R}$, and $\mathbf{u}_{ij}\in\mathbb{R}^{10}$. In the simulation, we let $d_{ij}=40$ and $\mathbf{u}_{ij}=0.2 * \mathbf{1}_{10}$.  The values of the other parameters are depicted in Table \ref{table1}. It can be verified that the game mapping is strongly monotone.
	
	Four shared constraints are considered in the simulation, which can be described as $g_s(\mathbf{x})=A_s\mathbf{x}-b_s$, where $A_1=-[1,\cdots,1]$, $A_2=[1,2,1,3,6,1,6,2,6,6]$, $A_3=[1,2,1.2,2,4,3,3,3,3,3]$, $A_4=[1,2,5,1,1,2,1,2,1,1]$, and  $b_1=-20,b_2=b_3=b_4=180$. In addition, each agent has a local constraint  set $\Omega_{ij}=\{\mathbf{x}_{ij}\in\mathbb{R}| 0 \leq \mathbf{x}_{ij}\leq 10 \}$. 
	
	It can be verified that the problem investigated in this section satisfying Assumptions  \ref{innercoal}-\ref{coali}.
	
	\begin{table}[htbp]
		\centering
		\caption{The values of the parameters.}
		\begin{tabular}{|c|c|c|c|c|}
			\hline
			(Coalition, Agent) & $s_{ij}$ & $r_{ij}$   \\
			\hline
			(1,1) & 3.2 & 4   \\
			(2,1) & 2.3 & 2   \\
			(2,2) & 2.3 & 6  \\
			(2,3) & 2.9 & 2   \\
			(3,1) & 1.5 & 1   \\
			(3,2) & 1.4 & 2   \\
			(3,3) & 2.1 & 5    \\
			(3,4) & 2.2 & 2    \\	
			(3,5) & 1.2 & 1   \\	
			(3,6) & 1.5 & 3   \\	
			\hline			
		\end{tabular}
		\label{table1}
	\end{table}
	
	The unique VE of the game is $[3.945, 5.388, 4.677, 4.003, 5.946, 8.138, 3.015, 4.629, 7.729$, $5.176]^T$. Let $\gamma_i=300$, $\alpha=1$, $\beta=100$, and $T_2=20$. Figs.	\ref{fig:coalition1and2}-\ref{fig:coalition31} show the convergence to the VE using Algorithm \ref{a3}.  It can be seen from the simulation results that the trajectories converge to the VE of the game, which verifies Theorem \ref{theorem3}.  Fig \ref{multi} shows the evolution of the multipliers of agent 1 in coalition 1, which are bounded. 
	
	Next, we consider the distributive constraint information scenario, where the constraint information distribution is shown in Fig.	\ref{fig:coalitionexample2}.  Each subsidiary company has access to only a subset of the constraints, based on the efficiency consideration.
	
	Let $\gamma_i=1000$, $\alpha=1$, $\beta=100$, and $T_2=20$. Figs.	\ref{fig:coalition1and2d}-\ref{fig:coalition31d} show the convergence to the VE using Algorithm \ref{a4}. 
	
	\begin{figure} [htbp]
		\centering
		\includegraphics[width=0.7\linewidth]{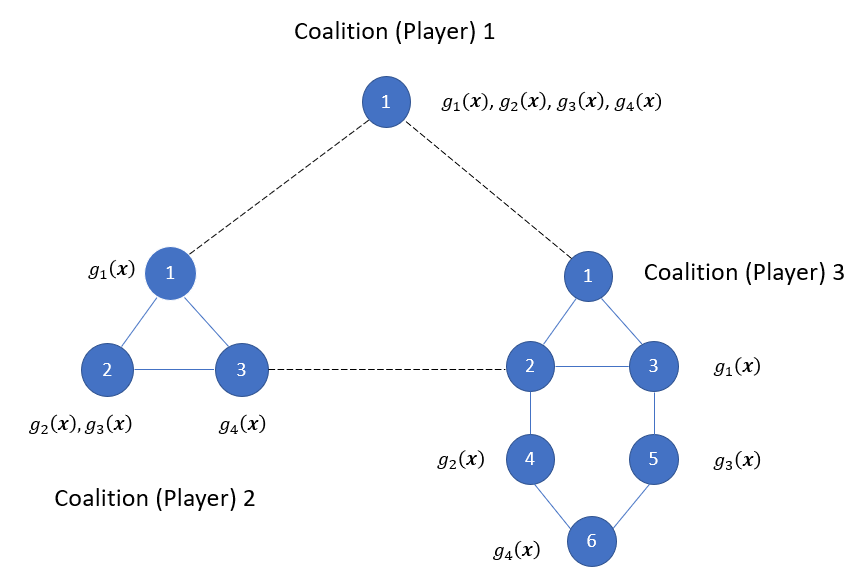}
		\caption{The distributive constraint information scenario.}
		\label{fig:coalitionexample2}
	\end{figure}

	\begin{figure}
		\centering
		\includegraphics[width=0.9\linewidth]{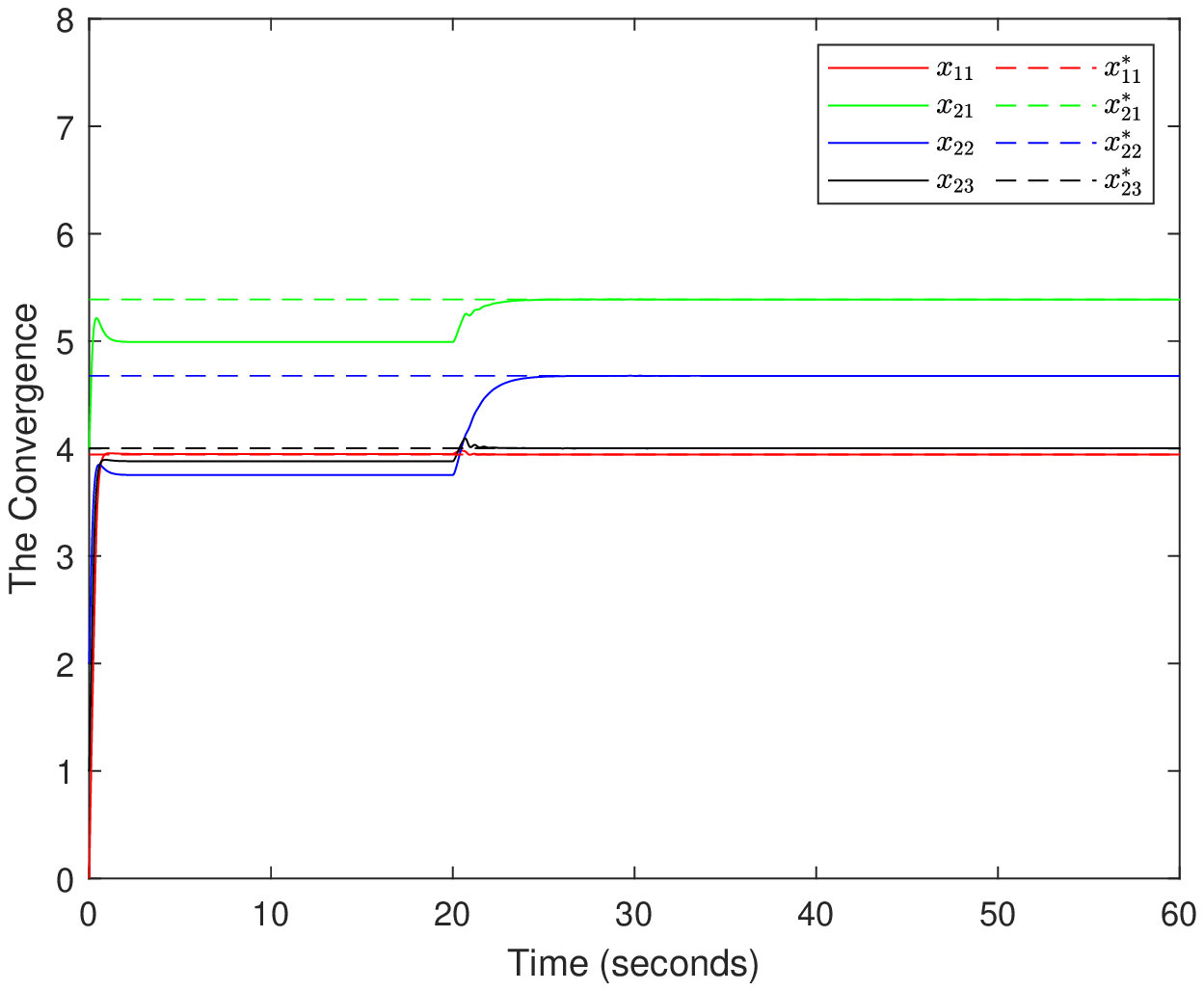}
		\caption{The convergence to the VE: agents in coalitions 1 and 2.}
		\label{fig:coalition1and2}
	\end{figure}
	\begin{figure}
		\centering
		\includegraphics[width=0.9\linewidth]{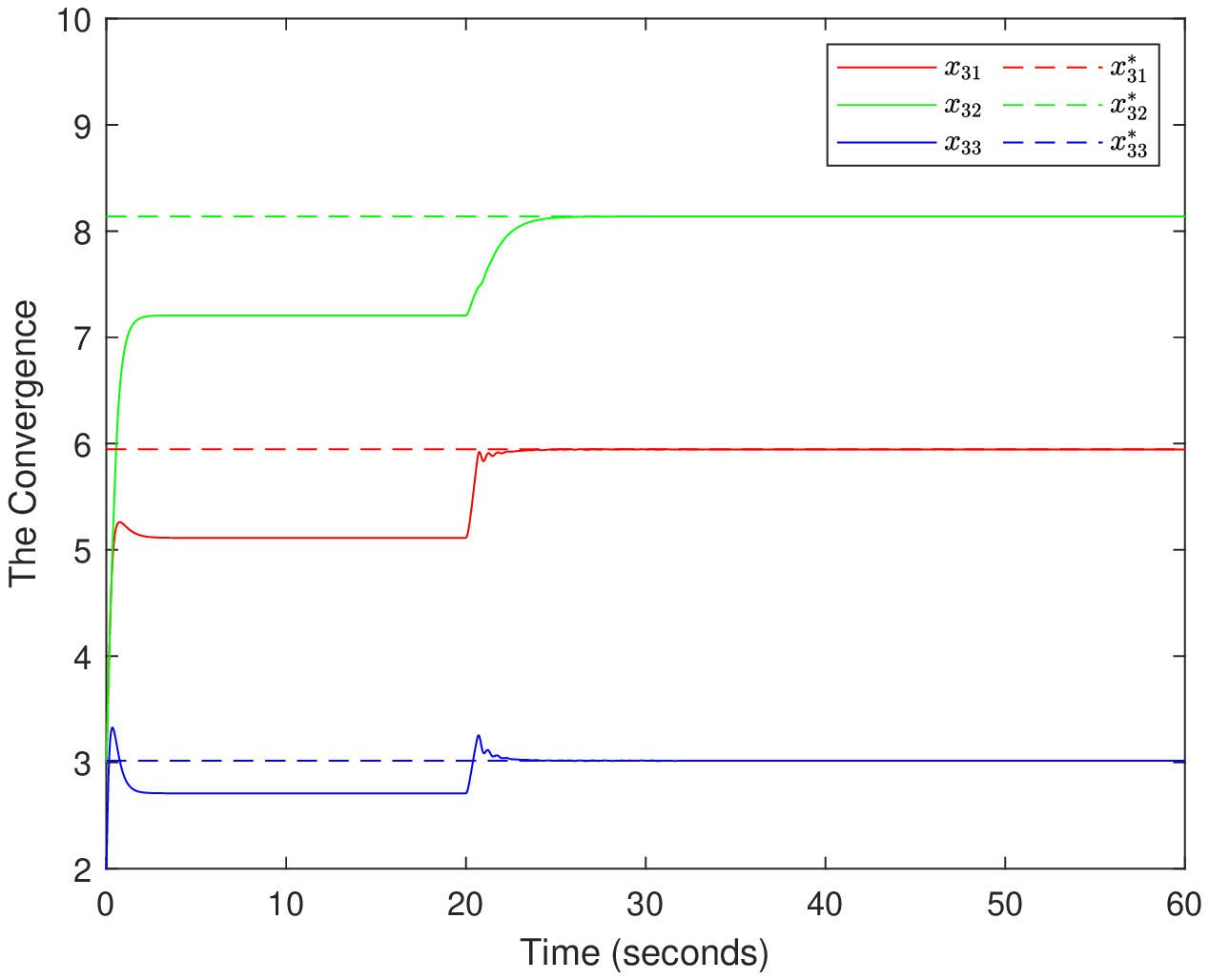}
		\caption{The convergence to the VE: agents $1$-$3$ in coalition 3.}
		\label{fig:coalition32}
	\end{figure}
	\begin{figure}
		\centering
		\includegraphics[width=0.9\linewidth]{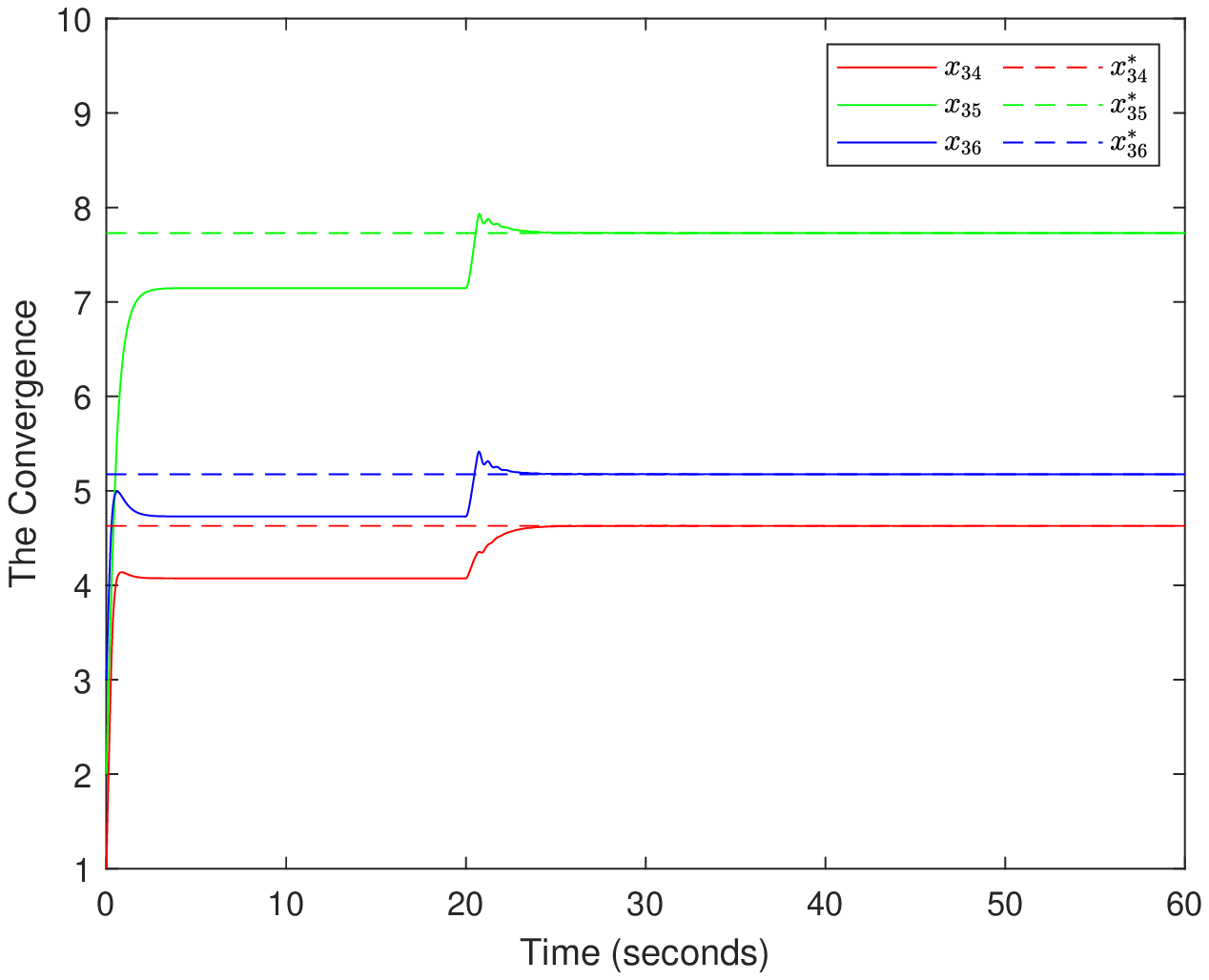}
		\caption{The convergence to the VE: agents $4$-$6$ in coalition 3.}
		\label{fig:coalition31}
	\end{figure}
	
	\begin{figure}
		\centering
		\includegraphics[width=0.9\linewidth]{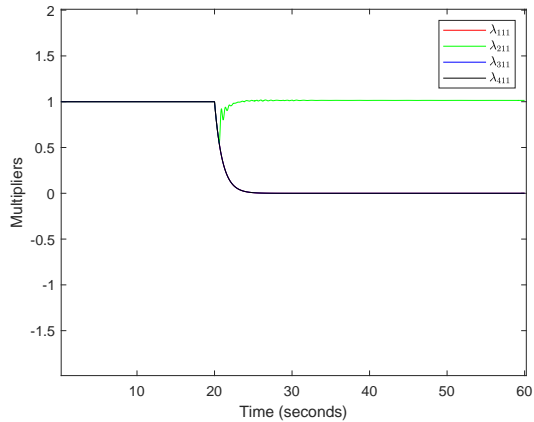}
		\caption{Multipliers: agent 1 in coalition 1.}
		\label{multi}
	\end{figure}

	\begin{figure}
		\centering
		\includegraphics[width=0.9\linewidth]{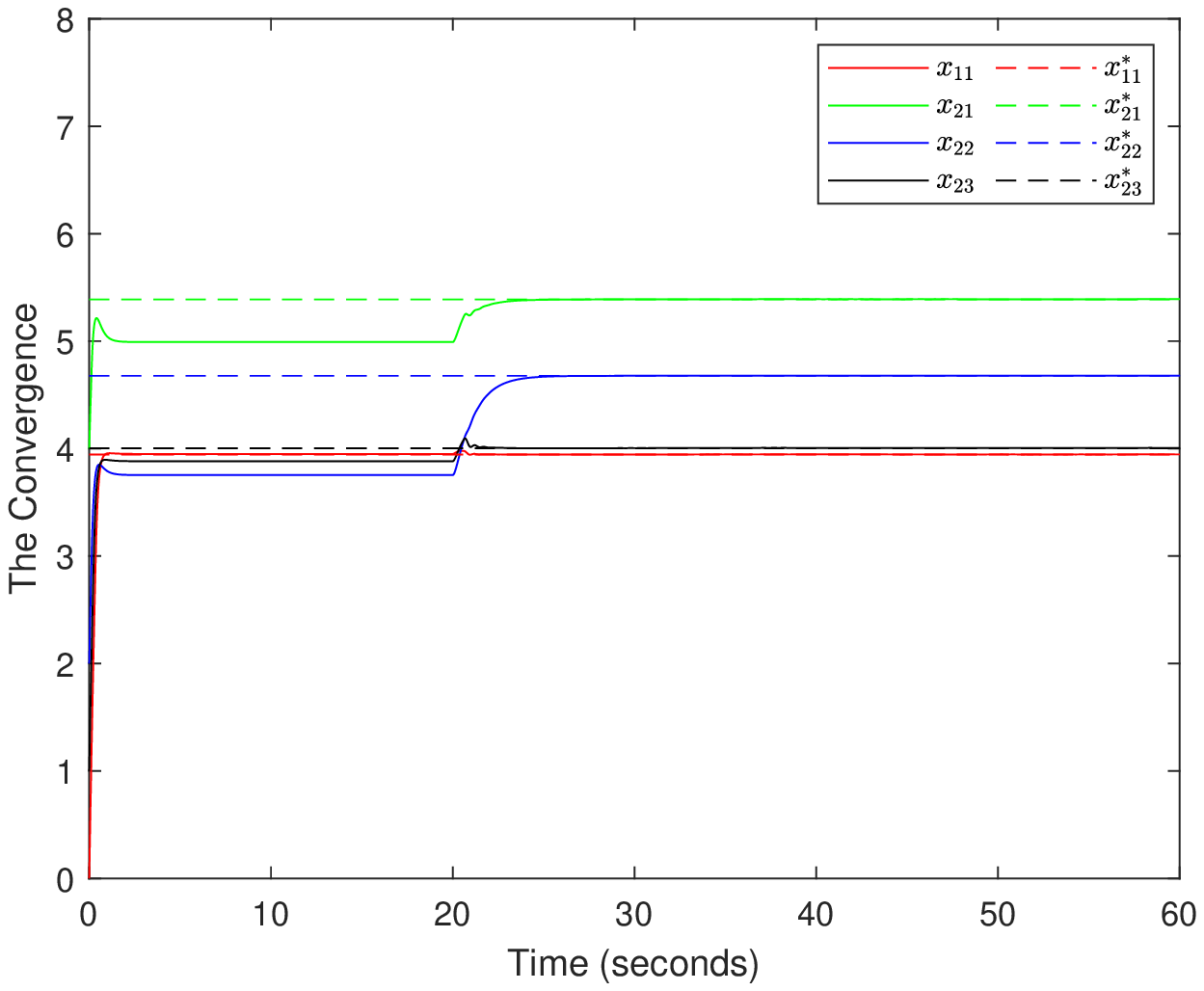}
		\caption{The convergence to the VE: agents in coalitions 1 and 2.}
		\label{fig:coalition1and2d}
	\end{figure}
	\begin{figure}
		\centering
		\includegraphics[width=0.9\linewidth]{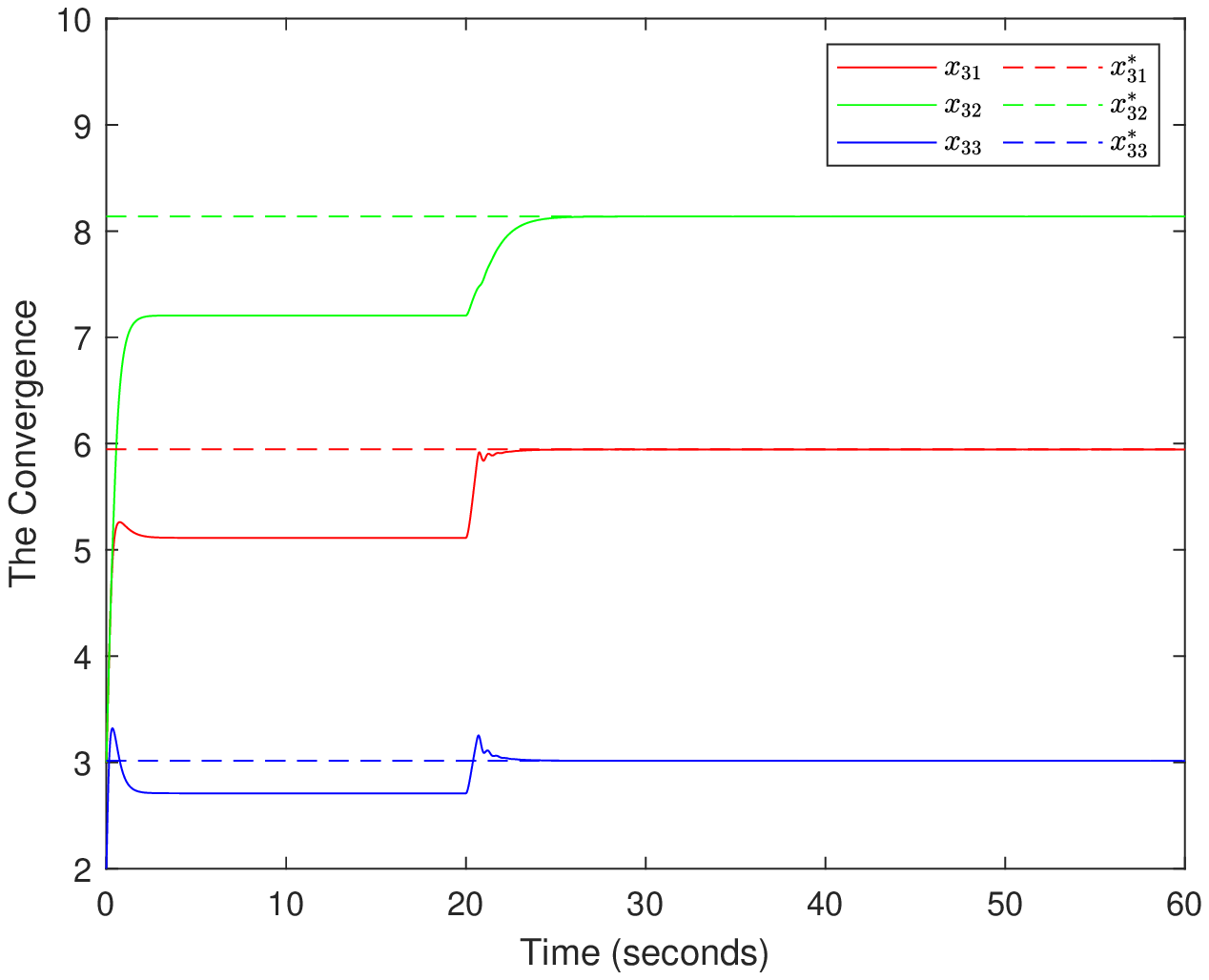}
		\caption{The convergence to the VE: agents $1$-$3$ in coalition 3.}
		\label{fig:coalition32d}
	\end{figure}
	\begin{figure}
		\centering
		\includegraphics[width=0.9\linewidth]{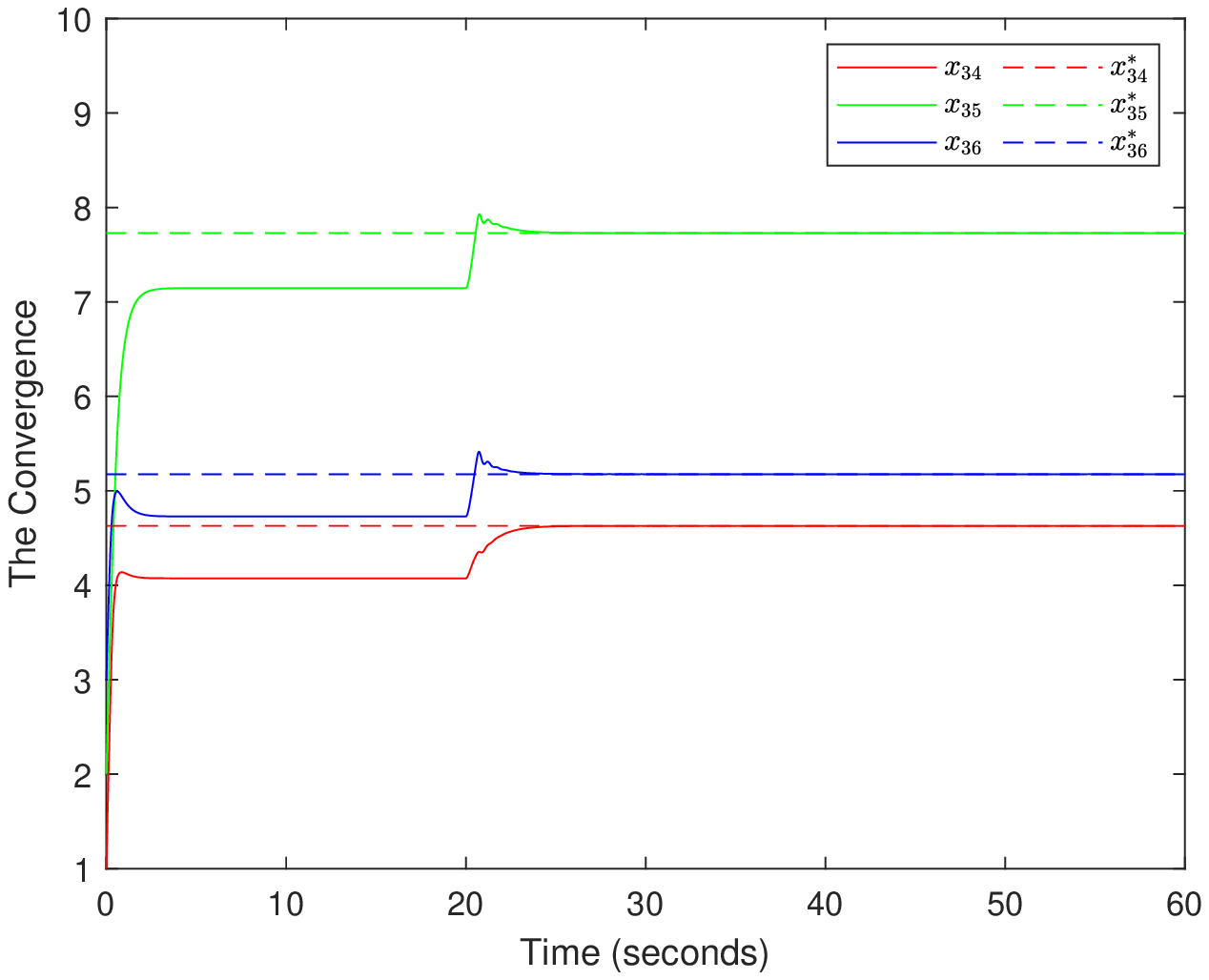}
		\caption{The convergence to the VE: agents $4$-$6$ in coalition 3.}
		\label{fig:coalition31d}
	\end{figure}

	\color{black}
	
	\section{Conclusions} \label{conclu}
	
	In this work, we proposed a finite-time consensus-based strategy for the GNE seeking problem of $N$-coalition games with inequality constraints. The main contribution of this work is the distributed algorithm design for $N$-coalition games with inequality constraints and a distributive constraint information-based algorithm. Full decision information algorithms and partial decision information algorithms were proposed. Lyapunov analysis was conducted to prove the convergence to the VE of the game.
	
	\bibliographystyle{IEEEtran}
	\bibliography{bib}
	
	\appendix
	
	\subsection{Proof of Lemma \ref{lemma2}} \label{ap1}
	
	According to Page 94 of \cite{facchinei2007finite}, $\mathbf{x}^*\in\Delta$ is a solution of VI$(F(\mathbf{x}), \Delta)$ if and only if  $0\in F(\mathbf{x}^*)+\mathcal{N}_{cone}(\mathbf{x}^*, \Delta)$, which is equivalent to $0\in  F(\mathbf{x}^*)+[\mathcal{T}(\mathbf{x}^*, \Delta)]^\circ$ since $\Delta$ is convex and closed (Corollary 5.2.5 of \cite{hiriart2013convex}), where $\mathcal{T}(\mathbf{x}^*, \Delta)$ is the tangent cone to  $\Delta$ at $\mathbf{x}^*$, and $[\cdot]^\circ$ represents the polar cone of a cone.
	
	Based on Theorem 3.20 of \cite{ruszczynski2011nonlinear}, under the Slater's condition, $[\mathcal{T}(\mathbf{x}^*, \Delta)]^\circ=[\mathcal{T}(\mathbf{x}^*, \Omega)]^\circ+\sum_{s\in I^\circ(\mathbf{x}^*)}\text{cone}(\{\nabla_\mathbf{x} {g_s}(\mathbf{x}^*)\})$, where $\text{cone}(X)$ represents a convex cone defined by $\text{cone}(X)=\{\sigma z| \sigma>0, z\in  X \}$, and $I^\circ(\mathbf{x}^*)=\{s|s\in\{1,\cdots, p\}, g_s(\mathbf{x}^*)=0\}$ represents the set of the subscripts of the active constraints (i.e., $g_s(\mathbf{x}^*)=0$, $s\in\{1,\cdots, p\}$).
	
	Then, since $[\mathcal{T}(\mathbf{x}^*, \Omega)]^\circ=\mathcal{N}_{cone}(\mathbf{x}^*, \Omega)$, $\mathbf{x}^*\in \Delta$\\
	is a solution of VI$(F(\mathbf{x}), \Delta)$ if and only if   there exists constants $\sigma_s^*>0, s\in I^\circ(\mathbf{x}^*)$ such that 
	\begin{eqnarray}
		0\in F(\mathbf{x}^*)+\sum_{s\in I^\circ(\mathbf{x}^*)}\sigma_{s}^*\nabla_{\mathbf{x}}g_s(\mathbf{x}^*)+\mathcal{N}_{cone}(\mathbf{x}^*, \Omega). \label{KKT3}
	\end{eqnarray}
	
	According to the definition of active constraints, the statement that there exists constants $\sigma_s^*>0, s\in I^\circ(\mathbf{x}^*)$ such that (\ref{KKT3}) holds is equivalent to the statement that there exists non-negative Lagrangian multipliers $\lambda^*_s, s=1,\cdots, p$  such that (\ref{kkt2}) holds. The proof is completed.
	
	\subsection{Proof of Lemma \ref{lemma3}} \label{ap2}
	
	\textit{Proof of 1)}:  According to Theorem 3.27 of \cite{ruszczynski2011nonlinear}, for each player's optimization problem, the KKT condition (\ref{kkt}) is sufficient to guarantee that $\mathbf{x}^*_{i}$ is an optimal solution to the optimization problem of $f_{i}(\mathbf{x}_i,\mathbf{x}^*_{-i})$,  subject to $\mathbf{x}_{i}\in\Omega_i\cap\{{\mathbf{x}_i\in\mathbb{R}^{m_i}|} g(\mathbf{x}_i,\mathbf{x}^*_{-i})\leq 0\}, \forall i \in \mathcal{N}.$ According to Definition \ref{ref}, $\mathbf{x}^*$ is a GNE.
	
	\textit{Proof of 2)}: It is a direct result of Lemma \ref{lemma2}.
	
	%

	%
	%
	
	\subsection{Proof of Lemma \ref{lemma1}} \label{ap3}
	The proof is similar to Theorem 3.1 of \cite{yi2016initialization}, which uses the fact that for any $z\in\Omega_i$, $\Omega_i\subset z+\mathcal{T}(z, \Omega_i)$  (see Page 219 of \cite{aubin1984differential}), and the statements on  Page 174 and Page 214 of \cite{aubin1984differential}. A similar conclusion also appears in \cite{liao2004neurodynamical}.
	
	\subsection{Proof of Theorem \ref{theorem1}} \label{ap4}

	Consider the dynamics in (\ref{al5a}) and (\ref{al5e}). For all $t\geq 0$, we have ${\mathbf{x}}(t)\in\Omega$ according to Lemma \ref{lemma1}. Furthermore, based on (\ref{al5e}), since $\mathcal{P}_{\mathbb{R}_{\geq 0}}[\lambda_{s}+g_s(\mathbf{x})]-\lambda_{s}\leq |g_s(\mathbf{x})|$ for $\lambda_s>0$, we have  ${\lambda}_s(t)\leq c_at+\lambda_s(0)$, where $c_a\geq |g_{s}(\mathbf{x})|$ for all $s\in\{1\cdots, p\}$ and $t\geq 0$, while the existence of $c_a$ can be guaranteed  by  Lemma \ref{lemma1}. 
	
	Motivated by \cite{yi2016initialization}, let $\theta=[\mathbf{x}^T,\lambda^T]^T$ with $\lambda=[\lambda_s]_{s\in\{1,\cdots,p\}}$, $U(\theta)=[(F(\mathbf{x})+\sum_{s=1}^p\lambda_sG_s(\mathbf{x}))^T,-g(\mathbf{x})^T]^T$, $\Sigma=\Omega\times \mathbb{R}_{\geq 0}^p$, and $H(\theta)=P_\Sigma(\theta-U(\theta))$.
	
	Define a Lyapunov candidate function $V(\theta)=(\theta-H(\theta))^TU(\theta)-\frac{1}{2}\Vert\theta-H(\theta)\Vert^2+\frac{1}{2}\Vert\theta-\theta^*\Vert^2$, where $\theta^*=[\mathbf{x}^{*T},\lambda^{*T}]^T$, $\mathbf{x}^*$ is the VE,  $ \lambda^*=[\lambda_s^*]_{s\in\{1,\cdots,p\}}$ and $\lambda_s^*$,  $s=1,\cdots, p$, are multipliers that satisfy (\ref{kkt}). Then, based on \cite{fukushima1992equivalent}, $ V(\theta)\geq \frac{1}{2}\Vert\theta-\theta^*\Vert^2$.
	
	According to Lemma \ref{lemma1} and the fact that ${\lambda}_s(t)\leq  c_at+\lambda_s(0)$, there exist  positive constants $c_{b}$ and $c_c$ such that for all $t\geq 0$, $V(t)\leq c_b+c_ct^2$. Thus, $V(T_1)\leq c_b+c_cT_1^2$, which is bounded.
	
	For $t\geq T_1$, according to Lemma \ref{l6}, $\mathbf{y}_{ij}^k(t)=\sum_{r=1}^{n_i}{\nabla_{\mathbf{x}_{ik}} f_{ir}(\mathbf{x})}$.
	
	Then, (\ref{al5a}) and  (\ref{al5e}) satisfy that for $t\geq T_1$,
	\begin{align}
		\dot{\theta}=H(\theta)-\theta. \label{V}
	\end{align}
	
	The stability analysis for (\ref{V}) is encouraged by Theorem 3 of \cite{liang2017distributed} with the constraints becoming nonlinear.
	
	According to Assumptions \ref{fassumption} and \ref{assumption_convex}, $U(\theta)$ is continuously differentiable. According to Lemma 3 of \cite{liang2017distributed} (Theorem 3.2 of \cite{fukushima1992equivalent}), $\nabla V=U(\theta)+(\mathcal{J}U(\theta)-I)(\theta-H(\theta))+(\theta-\theta^*)$, where $\mathcal{J}U(\theta)$ is the Jacobian
	matrix of $U(\theta)$. Taking the derivative of $V$ along (\ref{V}) for $t\geq T_1$  gives 
	\begin{align}
		\dot{V}&=(\nabla V)^T\dot{\theta} \notag \\
		&=(\nabla V)^T(H(\theta)-\theta) \notag \\
		&=-W_1(\theta)-W_2(\theta)-W_3(\theta)-W_4(\theta),
	\end{align}
	where
	\begin{align}
		W_1(\theta)&=(\theta^*-H(\theta))^T(U(\theta)+H(\theta)-\theta), \notag \\
		W_2(\theta)&=(\theta-\theta^*)^TU(\theta^*), \notag \\
		W_3(\theta)&=(\theta-\theta^*)^T(U(\theta)-U(\theta^*)), \notag \\
		W_4(\theta)&=(H(\theta)-\theta)^T\mathcal{J}U(\theta)(H(\theta)-\theta).
	\end{align}

	According to Lemma \ref{projector}-2) ($y=\theta^*, P_{\Omega}[x]= H(\theta), x=\theta-U(\theta)$),  one can get that $W_1(\theta)\geq 0$. 
	
	For any $\theta_a=[\mathbf{x}_a^T,\lambda_a^T]^T$ and $\theta_b=[\mathbf{x}_b^T,\lambda_b^T]^T$ with $\mathbf{x}_a, \mathbf{x}_b\in\mathbb{R}^M$ and $\lambda_a=[\lambda_{as}]_{s\in\{1,\cdots, p\}}, \lambda_b=[\lambda_{bs}]_{s\in\{1,\cdots, p\}} \in\mathbb{R}_{\geq 0}^p$, we have
	\begin{align}
		&(\theta_a-\theta_b)^T(U(\theta_a)-U(\theta_b))\notag \\=&(\mathbf{x}_a-\mathbf{x}_b)^T(F(\mathbf{x}_a)-F(\mathbf{x}_b))\notag\\
		+&(\mathbf{x}_a-\mathbf{x}_b)^T(\sum_{s=1}^p\lambda_{as}G_s(\mathbf{x}_a)-\sum_{s=1}^p\lambda_{bs}G_s(\mathbf{x}_b))\notag\\
		-&(\lambda_a-\lambda_b)^T(g(\mathbf{x}_a)-g(\mathbf{x}_b))\geq 0,
	\end{align}
	where the strict monotonicity of $F(\mathbf{x})$ and the convexity of $g(\mathbf{x})$ are used. Thus, $W_3(\theta)\geq 0$. Furthermore,  $U(\theta)$ is monotone. According to Proposition 2.3.2 of \cite{facchinei2007finite}, $\mathcal{J}U(\theta)$ is positive semidefinite,  which implies that $W_4(\theta)\geq 0$.

	Based on the KKT condition (\ref{kkt2}), Lemma \ref{lemma1}, the definition of normal cone, $(\mathbf{x}-\mathbf{x}^*)^T(F(\mathbf{x}^*)+\sum_{s=1}^p\lambda_sG_s(\mathbf{x}^*))\geq 0$. Based on Lemma  \ref{lemma1_1}, $-(\lambda-\lambda^*)^Tg(\mathbf{x}^*)=-\lambda^Tg(\mathbf{x}^*)\geq 0$. It follows that $W_2(\theta)\geq 0$.

	Then, $\dot{V}\leq 0$ for all $t\geq T_1$. It follows that $0 \leq V(t)\leq V(T_1)$ for all $t\geq T_1$, which implies the boundedness of $\lambda$. Furthermore, since $\dot{V}\leq -W_3(\theta)\leq -(\mathbf{x}-\mathbf{x}^*)^T(F(\mathbf{x})-F(\mathbf{x}^*))\leq 0$, we have $\int_{T_1}^t(\mathbf{x}-\mathbf{x}^*)^T(F(\mathbf{x})-F(\mathbf{x}^*))dt\leq V(T_1)$. According to the Monotone Convergence Theorem, $\lim_{t\rightarrow \infty}\int_{T_1}^t(\mathbf{x}-\mathbf{x}^*)^T(F(\mathbf{x})-F(\mathbf{x}^*))dt$ exists and is finite. According to the uniform continuity of $(\mathbf{x}-\mathbf{x}^*)^T(F(\mathbf{x})-F(\mathbf{x}^*))$ and the Barbalat's Lemma, $(\mathbf{x}-\mathbf{x}^*)^T(F(\mathbf{x})-F(\mathbf{x}^*))\rightarrow 0$ as $t\rightarrow \infty$. Let $\mathbf{x}^k$ be an arbitrary  subsequence of $\mathbf{x}$. According to the boundedness of $\mathbf{x}^k$, there exists at least one accumulation point. Let $\bar{x}$ be an accumulation point, i.e., there exists a convergent subsequence $\mathbf{x}^{k_s}$ of $\mathbf{x}^k$ such that $\lim_{k_s\rightarrow\infty}\mathbf{x}^{k_s}=\bar{x}$. According to the continuity of $F(\mathbf{x})$ and the composite function limit, $\lim_{k_s\rightarrow\infty}(\mathbf{x}^{k_s}-\mathbf{x}^*)^T(F(\mathbf{x}^{k_s})-F(\mathbf{x}^*))=(\bar{x}-\mathbf{x}^*)^T(F(\bar{x})-F(\mathbf{x}^*))$, which equals to zero, since any subsequence of  $(\mathbf{x}-\mathbf{x}^*)^T(F(\mathbf{x})-F(\mathbf{x}^*))$ tends to zero. According to the strict monotonicity of  $F(\mathbf{x})$, $\bar{x}=\mathbf{x}^*$. Then, $\mathbf{x}^k$ has a unique accumulation point $\mathbf{x}^*$, which implies that $\lim_{k\rightarrow\infty}\mathbf{x}^k=\mathbf{x}^*$. Since $\mathbf{x}^k$ is arbitrary, $\lim_{t\rightarrow\infty} \mathbf{x}=\mathbf{x}^*$.
	
	\subsection{Proof of Theorem \ref{theorem3}} \label{ap6}
	
	According to Lemma \ref{l3}, we can get the following conclusion for the finite-time consensus algorithm in (\ref{finite}).
	
	\begin{lemma}
		If $\beta>c_2$, then for all $t\geq T_2$, where $T_2$ is a constant such that $T_2\geq \max_{w\in\mathcal{N}, q\in\mathcal{N}_w}\{ {\frac{\lambda_{max}(H_{wq})}{\lambda_{min}(H_{wq})}}\frac{1}{\beta-c_2}\sum_{i=1}^N\sum_{j=1}^{n_i}\Vert\mathbf{z}_{ij, wq}(0)-\mathbf{x}_{wq}(0)\Vert_\infty\}$, we have $\mathbf{z}_{ij, wq}(t)=\mathbf{x}_{wq}(t)$. Moreover, $\Vert\mathbf{z}_{ij, wq}(t)-\mathbf{x}_{wq}(t)\Vert_\infty^2\leq  { \frac{\lambda_{\max}(H_{wq})}{\lambda_{\min}(H_{wq})}}\sum_{i=1}^N\sum_{j=1}^{n_i}\Vert\mathbf{z}_{ij, wq}(0)-\mathbf{x}_{wq}(0)\Vert_\infty^2$ for all $t\geq 0$. \label{fc}
	\end{lemma}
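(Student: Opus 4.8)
The plan is to recognize the finite-time consensus dynamics (\ref{finite}) as a bank of decoupled scalar finite-time consensus-tracking systems, one per target agent $(w,q)$ and one per coordinate, and to invoke Lemma \ref{l3} on each. Fix a target index $(w,q)$ and a coordinate $c\in\{1,\ldots,m_{wq}\}$. Since $\text{sgn}(\cdot)$ acts coordinatewise, the $c$-th entries $\{[\mathbf{z}_{ij,wq}]_c\}_{(i,j)\neq(w,q)}$ obey exactly the scalar dynamics of Lemma \ref{l3}, with agent $(w,q)$ playing the role of the leader $x_0$, its entry $[\mathbf{x}_{wq}]_c$ being the tracked signal (recall $\mathbf{z}_{wq,wq}=\mathbf{x}_{wq}$), the follower graph being $\bar{\mathcal{G}}_{wq}$, and the associated matrix being $H_{wq}=L_{wq}+B_{wq}$, which is symmetric and positive definite by connectivity of $\mathcal{G}$ and Lemma 4 of \cite{hu2007leader}. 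First I would verify the two hypotheses of Lemma \ref{l3}: (i) positive definiteness of $H_{wq}$, already available; and (ii) a uniform bound on the derivative of the tracked signal. For (ii), note that $\mathbf{x}_{ij}$ evolves by the projection dynamics (\ref{bl3a}) and remains in the compact set $\Omega_{ij}$ (by the same argument as in Lemma \ref{lemma1}), so $\Vert\dot{\mathbf{x}}\Vert\leq c_2$ holds unconditionally as in (\ref{eqvi}); hence $|\frac{d}{dt}[\mathbf{x}_{wq}]_c|\leq c_2$ for every coordinate. Thus the derivative bound $d$ of Lemma \ref{l3} is played by $c_2$, and the standing condition $\beta>c_2$ is precisely $\beta>d$.

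Next I would apply Lemma \ref{l3} to each coordinate-$c$ subsystem, which yields two facts. The overshoot estimate $|[\mathbf{z}_{ij,wq}(t)]_c-[\mathbf{x}_{wq}(t)]_c|\leq \sqrt{\lambda_{max}(H_{wq})/\lambda_{min}(H_{wq})}\,\Vert\tilde{z}_c(0)\Vert$ for all $t\geq 0$, where $\tilde{z}_c(0)$ collects the initial follower errors of coordinate $c$; squaring, bounding each initial coordinate error by $\Vert\mathbf{z}_{ij,wq}(0)-\mathbf{x}_{wq}(0)\Vert_\infty$, and taking the maximizing coordinate on the left then gives the ``moreover'' bound on $\Vert\mathbf{z}_{ij,wq}(t)-\mathbf{x}_{wq}(t)\Vert_\infty^2$. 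And a finite settling time $\bar{T}_{wq,c}=\frac{\sqrt{\tilde{z}_c^T(0)H_{wq}\tilde{z}_c(0)}\sqrt{\lambda_{max}(H_{wq})}}{(\beta-c_2)\lambda_{min}(H_{wq})}$ after which coordinate $c$ tracks exactly.

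Finally I would convert this settling-time expression into the form appearing in the definition of $T_2$. Using $\tilde{z}_c^T(0)H_{wq}\tilde{z}_c(0)\leq \lambda_{max}(H_{wq})\Vert\tilde{z}_c(0)\Vert^2$ together with $\Vert\tilde{z}_c(0)\Vert\leq \Vert\tilde{z}_c(0)\Vert_1=\sum_{(i,j)}|[\mathbf{z}_{ij,wq}(0)]_c-[\mathbf{x}_{wq}(0)]_c|\leq \sum_{i=1}^N\sum_{j=1}^{n_i}\Vert\mathbf{z}_{ij,wq}(0)-\mathbf{x}_{wq}(0)\Vert_\infty$ (the index $(w,q)$ contributing a zero term), I obtain $\bar{T}_{wq,c}\leq \frac{\lambda_{max}(H_{wq})}{\lambda_{min}(H_{wq})}\frac{1}{\beta-c_2}\sum_{i=1}^N\sum_{j=1}^{n_i}\Vert\mathbf{z}_{ij,wq}(0)-\mathbf{x}_{wq}(0)\Vert_\infty$, which is independent of $c$. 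Taking the maximum over all coordinates and all target indices $(w,q)$ shows that every coordinate of every estimate has settled by this time, so any $T_2$ at least this large guarantees $\mathbf{z}_{ij,wq}(t)=\mathbf{x}_{wq}(t)$ for all $t\geq T_2$. The only point requiring genuine care is hypothesis (ii): one must be sure the tracked signal's derivative is bounded by $c_2$ uniformly and \emph{before} any convergence is established, which is what avoids a circular argument; this is guaranteed here because the bound stems solely from the projection structure of (\ref{bl3a}) and compactness of $\Omega$, and not from the behaviour of the estimators $\mathbf{z}$ or the multipliers.
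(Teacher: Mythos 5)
Your proposal is correct and follows the same route as the paper, whose entire proof of this lemma is the one-line invocation of Lemma \ref{l3}; you have simply written out the coordinatewise reduction, the identification $d=c_2$ (correctly noting it comes from the projection structure of (\ref{bl3a}) and compactness of $\Omega$ alone, avoiding circularity), and the norm estimates $\sqrt{\tilde{z}_c^T(0)H_{wq}\tilde{z}_c(0)}\sqrt{\lambda_{\max}(H_{wq})}\leq\lambda_{\max}(H_{wq})\Vert\tilde{z}_c(0)\Vert\leq\lambda_{\max}(H_{wq})\Vert\tilde{z}_c(0)\Vert_1$ that turn the settling time of Lemma \ref{l3} into the stated bound on $T_2$. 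All steps check out, including the ``moreover'' overshoot bound obtained by squaring the per-coordinate estimate and maximizing over coordinates.
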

	
	Thus, according to the boundedness of $\mathbf{z}_{ij}$ in Lemma \ref{fc} and $\dot{\mathbf{z}}_{ij}$ (based on (\ref{finite})), there exists a constant $c_4$ such that
	\begin{align}
		\left\Vert
		\frac{d(n_i{\nabla_{\mathbf{x}_{ik}} f_{ij}(\mathbf{z}_{ij})})}{dt}\right\Vert_\infty\leq c_4, \label{d1}
	\end{align}
	where $c_4$  depends on the bound of the set $\Omega$, $n_i$,  the bound of some second-order Hessian entries in a compact set, graph information, $\alpha$, $\beta$, and  initial values.
	
	Then, based on the finite-time consensus result in Lemma \ref{fc}, we can get the following conclusion for the finite-time average consensus tracking algorithm in (\ref{bl3b}) and  (\ref{bl3c}), similar to the result in  Lemma \ref{l6}.
	
	\begin{lemma}
		If $\gamma_i>c_4$, then for all $t\geq T_4\triangleq \max\{T_2, T_3\}$ where $ T_3\triangleq\max_{i\in\mathcal{N}, k\in\mathcal{N}_i}\{ \frac{1}{2(\gamma_i-c_4)}\sum_{j=1}^{n_i}\sum_{r=1}^{n_i}a_{i,jr}\Vert\mathbf{y}_{ij}^k(0)-\mathbf{y}_{ir}^k(0) \Vert_\infty\}$, we have	$\mathbf{y}_{ij}^k(t)=\sum_{r=1}^{n_i}{\nabla_{\mathbf{x}_{ik}} f_{ir}(\mathbf{x})}$. \label{l8}
	\end{lemma}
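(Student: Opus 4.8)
The plan is to recognize Lemma~\ref{l8} as a direct re-application of the finite-time average consensus tracking result (Lemma~\ref{l2}), now driven by a signal that depends on the estimates $\mathbf{z}_{ij}$ rather than on the true action $\mathbf{x}$; the only extra work is to reconcile the tracked average with the claimed gradient sum using the finite-time consensus of $\mathbf{z}$ established in Lemma~\ref{fc}. Fix a coalition $i\in\mathcal{N}$ and an index $k\in\mathcal{N}_i$. Restricted to this pair, the dynamics (\ref{bl3b})--(\ref{bl3c}) have exactly the structure of Lemma~\ref{l2} on the graph $\mathcal{G}_i$ (undirected and connected by Assumption~\ref{innercoal}), under the identifications $z_i\leftrightarrow\mathbf{w}_{ij}^k$ (with $\mathbf{w}_{ij}^k(0)=0$ matching $z_i(0)=0$), $x_i\leftrightarrow\mathbf{y}_{ij}^k$, $\alpha\leftrightarrow\gamma_i$, and exogenous signal $r_j(t)\leftrightarrow n_i\nabla_{\mathbf{x}_{ik}} f_{ij}(\mathbf{z}_{ij})$, since $-\text{sgn}(\mathbf{y}_{ij}^k-\mathbf{y}_{ir}^k)=\text{sgn}(\mathbf{y}_{ir}^k-\mathbf{y}_{ij}^k)$.

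First I would verify the single hypothesis required by Lemma~\ref{l2}, namely that the exogenous signal has a uniformly bounded time derivative. This is exactly the bound (\ref{d1}): Lemma~\ref{fc} guarantees that $\mathbf{z}_{ij}(t)$ remains in a compact set for all $t\geq 0$, while the right-hand side of (\ref{finite}) is bounded, so $\dot{\mathbf{z}}_{ij}$ is bounded; combined with the twice continuous differentiability of $f_{ij}$ (Assumption~\ref{fassumption}) this yields $\Vert \frac{d}{dt}(n_i\nabla_{\mathbf{x}_{ik}} f_{ij}(\mathbf{z}_{ij}))\Vert_\infty\leq c_4$. I would stress that, unlike the situation in Algorithms~\ref{a1} and \ref{a4}, no multiplier term enters (\ref{bl3b}); hence $c_4$ is obtained purely from the boundedness of $\mathbf{z}$ and $\dot{\mathbf{z}}$ and there is no circular dependence on the primal-dual dynamics. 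Consequently, with $\gamma_i>c_4$, Lemma~\ref{l2} applies on each coalition/index pair and gives, for all $t\geq T_3$,
\begin{align}
    \mathbf{y}_{ij}^k(t)&=\frac{1}{n_i}\sum_{r=1}^{n_i} n_i\nabla_{\mathbf{x}_{ik}} f_{ir}(\mathbf{z}_{ir}(t))\notag\\
    &=\sum_{r=1}^{n_i}\nabla_{\mathbf{x}_{ik}} f_{ir}(\mathbf{z}_{ir}(t)),\notag
\end{align}
where $T_3$ is precisely the settling time quoted in the statement.

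The final step is to replace the estimates by the true action. By Lemma~\ref{fc}, for all $t\geq T_2$ we have $\mathbf{z}_{ir}(t)=\mathbf{x}(t)$ for every $r$, so $\nabla_{\mathbf{x}_{ik}} f_{ir}(\mathbf{z}_{ir}(t))=\nabla_{\mathbf{x}_{ik}} f_{ir}(\mathbf{x}(t))$. Taking $t\geq T_4=\max\{T_2,T_3\}$ makes both conclusions hold simultaneously and yields $\mathbf{y}_{ij}^k(t)=\sum_{r=1}^{n_i}\nabla_{\mathbf{x}_{ik}} f_{ir}(\mathbf{x}(t))$, as claimed. I expect no genuine obstacle here: the entire argument is a composition of two already-established finite-time results, and the only point demanding care is the bookkeeping of the two distinct settling times $T_2$ (for the estimate consensus) and $T_3$ (for the gradient averaging), together with the observation that the derivative bound feeding Lemma~\ref{l2} must be---and is---established independently of the consensus conclusion.
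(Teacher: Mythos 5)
Your proposal is correct and follows essentially the same route as the paper: the paper also obtains the bound (\ref{d1}) from the boundedness of $\mathbf{z}_{ij}$ (via Lemma \ref{fc}) and $\dot{\mathbf{z}}_{ij}$, then invokes the finite-time average consensus tracking result of Lemma \ref{l2} exactly as in Lemma \ref{l6}, and finally uses $\mathbf{z}_{ir}(t)=\mathbf{x}(t)$ for $t\geq T_2$ to pass from the estimates to the true action. Your write-up simply makes explicit the bookkeeping of the two settling times $T_2$ and $T_3$ that the paper leaves implicit.
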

	
	The following proof is similar to that of Theorem \ref{theorem1}. Before  $T_4$, all the state variables remain bounded. After $T_4$, the system is equivalent to the system described in (\ref{V}). Thus, we can prove that $\lim_{t\rightarrow\infty} \mathbf{x}=\mathbf{x}^*$.

	\subsection{Proof of Theorem \ref{t2}} \label{ap5}
	
	For all $t\geq 0$, according to the analysis in Theorem \ref{theorem1}, 	${\lambda}_s(t)\leq{\lambda}_s(0)+\int_{\tau=0}^tc_ad\tau={\lambda}_s(0)+ c_at$. Consider the same Lyapunov function as the previous section, i.e.,  $V(\theta)=(\theta-H(\theta))^TU(\theta)-\frac{1}{2}\Vert\theta-H(\theta)\Vert^2+\frac{1}{2}\Vert\theta-\theta^*\Vert^2.$ Then, $V(t)\leq c_b+c_ct^2$  for some $c_b$ and $c_c$.
	
	Let $T>0$ be an arbitrary and fixed constant. For any $\theta$ such that $V(\theta)\leq c_b+c_cT^2$, we have $\frac{1}{2}\Vert\theta-\theta^*\Vert^2\leq V(\theta)\leq c_b+c_cT^2$. Thus, there exists a positive constant   $c_5(T)$ such that $\Vert\frac{d}{dt}(n_i{\nabla_{\mathbf{x}_{ik}} f_{ij}(\mathbf{x})}+\sum_{s=1}^p\delta_{ijs}\frac{n_i}{q_{is}}\lambda_s\nabla_{\mathbf{x}_{ik}} g_{s} (\mathbf{x}))\Vert_\infty< c_5(T)$ for all $i\in\mathcal{V}, j,k \in\mathcal{N}_i$ and all $\theta$ satisfying $V(\theta)\leq c_b+c_cT^2$. 
	
	Denote $T_3=\max_{{i\in\mathcal{N}, k\in\mathcal{N}_i}}\{ \frac{1}{2(\gamma_i-c_5(T))}\sum_{j=1}^{n_i}\sum_{r=1}^{n_i}a_{i,jr}$ $\Vert\mathbf{y}_{ij}^k(0)-\mathbf{y}_{ir}^k(0) \Vert_\infty\}$. Then,  we have the following conclusion.
	
	\begin{lemma}
		For any $T>0$, the dynamics in (\ref{al7b1}) and  (\ref{al7c1}) guarantee that if $\gamma_i>c_5(T)$ and $T_3<T$, then 1) for all $0 \leq t\leq T$,  	$\Vert\frac{d}{dt}(n_i{\nabla_{\mathbf{x}_{ik}} f_{ij}(\mathbf{x})}+\sum_{s=1}^p\frac{n_i}{q_{is}}\delta_{ijs}\lambda_s\nabla_{\mathbf{x}_{ik}} g_{s} (\mathbf{x}))\Vert_\infty
		< c_5(T)$ for all $i\in\mathcal{V}, j,k \in\mathcal{N}_i$; 2) for all $T_3\leq t\leq T$, 	$\mathbf{y}_{ij}^k(t)=\sum_{r=1}^{n_i}{\nabla_{\mathbf{x}_{ik}} f_{ir}(\mathbf{x})}+\sum_{s=1}^p\lambda_s\nabla_{\mathbf{x}_{ik}} g_{s} (\mathbf{x})$. \label{lem}
	\end{lemma}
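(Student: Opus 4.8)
The plan is to prove the two claims in turn, using the fact that we work on the fixed finite window $[0,T]$ to break the circular dependence between the size of the multipliers and the derivative bound on the tracked signal. For claim 1) I would deliberately \emph{not} try to establish a time‑uniform bound on $\lambda_s$; instead I would use only the crude a priori estimate $\lambda_s(t)\leq\lambda_s(0)+c_at$ already recorded just before the lemma. That estimate holds for every $t\geq 0$ because it follows solely from $\mathbf{x}(t)\in\Omega$ (Lemma \ref{lemma1}, whose projection argument applies verbatim to (\ref{al7a1})) together with the projection structure of (\ref{al7d1}), and so it needs nothing about the consensus variables. Combined with compactness of $\Omega$ this yields $V(\theta(t))\leq c_b+c_ct^2\leq c_b+c_cT^2$ for all $t\in[0,T]$. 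Since $c_5(T)$ was \emph{defined} as a uniform bound on $\Vert\frac{d}{dt}(\cdots)\Vert_\infty$ over the sublevel set $\{\theta:V(\theta)\leq c_b+c_cT^2\}$ (using $\Vert\dot{\mathbf{x}}\Vert\leq c_2$, boundedness of the relevant Hessian entries on the compact set $\Omega$, and the bounds on $\lambda$ and $\dot\lambda$ on that set), the containment $\theta(t)\in\{\theta:V(\theta)\leq c_b+c_cT^2\}$ for $t\in[0,T]$ yields claim 1) at once.

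For claim 2) I would apply the finite‑time average consensus tracking result, Lemma \ref{l2}, to the pair (\ref{al7b1})–(\ref{al7c1}) inside each coalition $i$ for each fixed $k$, identifying $\mathbf{w}_{ij}^k\leftrightarrow z_i$, $\mathbf{y}_{ij}^k\leftrightarrow x_i$, and the reference $r_{ij}^k(t)=n_i\nabla_{\mathbf{x}_{ik}}f_{ij}(\mathbf{x})+\sum_{s=1}^p\frac{n_i}{q_{is}}\delta_{ijs}\lambda_s\nabla_{\mathbf{x}_{ik}}g_s(\mathbf{x})\leftrightarrow r_i$. Claim 1) supplies exactly the hypothesis of Lemma \ref{l2}, namely the pointwise bound $\Vert\dot r_{ij}^k(t)\Vert_\infty<c_5(T)$ on $[0,T]$; together with $\gamma_i>c_5(T)$ and settling time $T_3<T$ this gives consensus by $T_3$, maintained through $T$. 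It then remains to identify the consensus value. The averaged reference over coalition $i$ is $\frac{1}{n_i}\sum_{j=1}^{n_i}r_{ij}^k=\sum_{j=1}^{n_i}\nabla_{\mathbf{x}_{ik}}f_{ij}(\mathbf{x})+\sum_{s=1}^p\frac{1}{q_{is}}\bigl(\sum_{j=1}^{n_i}\delta_{ijs}\bigr)\lambda_s\nabla_{\mathbf{x}_{ik}}g_s(\mathbf{x})$, and the key algebraic identity is $\sum_{j\in\mathcal{N}_i}\delta_{ijs}=q_{is}$: by the definition $q_{is}=\text{card}\{S_{is}\}$, the number of agents in coalition $i$ storing a copy of $g_s$ equals the number of stored copies (assuming each agent holds a given constraint at most once). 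Hence the coefficient collapses to $1$ and the value is $\sum_{j=1}^{n_i}\nabla_{\mathbf{x}_{ik}}f_{ij}(\mathbf{x})+\sum_{s=1}^p\lambda_s\nabla_{\mathbf{x}_{ik}}g_s(\mathbf{x})$, which is claim 2).

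The main obstacle I anticipate is a mismatch of hypotheses: Lemma \ref{l2} is stated with a reference whose derivative is bounded by a constant for \emph{all} $t\geq 0$, whereas claim 1) furnishes this only on the finite window $[0,T]$, and no global bound is available (the a priori estimate on $\lambda_s$ grows linearly, so $\dot r_{ij}^k$ could in principle grow beyond $T$). I would resolve this by observing that the finite‑time convergence in Lemma \ref{l2} is obtained from a differential inequality on a consensus‑error Lyapunov function that invokes the derivative bound only pointwise up to the settling time; since $T_3<T$ and the bound $c_5(T)$ holds on all of $[0,T]\supseteq[0,T_3]$, the same argument gives convergence at $T_3$ and its persistence on $[T_3,T]$. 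Equivalently and more formally, I would extend each $r_{ij}^k$ past $T$ by any continuation with derivative bounded by $c_5(T)$, apply Lemma \ref{l2} verbatim to the extended system to obtain consensus by $T_3$, and note that the original and extended trajectories coincide on $[0,T]$ since their dynamics agree there. This localization to $[0,T]$ is precisely what later allows the Lyapunov decrease in the proof of Theorem \ref{t2} to take over and, through a continuation argument, promote the finite‑horizon conclusion to all $t\geq 0$.
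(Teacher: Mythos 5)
Your proof is correct and follows essentially the same route as the paper's: the a priori bound $\lambda_s(t)\leq\lambda_s(0)+c_at$ gives $V(\theta(t))\leq c_b+c_cT^2$ on $[0,T]$, claim 1) then follows from the definition of $c_5(T)$ as a derivative bound over that sublevel set, and claim 2) follows by applying Lemma \ref{l2} to (\ref{al7b1})--(\ref{al7c1}). Your additional care — the extension/localization argument reconciling Lemma \ref{l2}'s global derivative hypothesis with the finite window $[0,T]$, and the explicit identity $\sum_{j\in\mathcal{N}_i}\delta_{ijs}=q_{is}$ that makes the consensus value collapse to $\sum_{r}\nabla_{\mathbf{x}_{ik}}f_{ir}+\sum_s\lambda_s\nabla_{\mathbf{x}_{ik}}g_s$ — fills in details the paper leaves implicit rather than departing from its argument.
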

	
	\textit{Proof of Lemma \ref{lem}: Since $V(t)\leq c_b+c_ct^2$ for all $t\geq 0$, we have $V(t)\leq c_b+c_cT^2$ for $0 \leq t\leq T$. Then, 1) is proven according to the definition of $c_5(T)$ above this lemma. Based on the boundedness of the average consensus tracked variable within $[0,T]$, we can  utilize  Lemma \ref{l2} to prove the finite-time average consensus at $t=T_3$. Then, 2) is proven.}

	Thus, for all $T_2\leq t\leq T$, by a similar analysis as in Theorem \ref{theorem1}, we have $\dot{V}\leq 0$, and $V(t)\leq V(T_2)\leq c_b+c_cT_2^2<c_b+c_cT^2$. 
	
	Next, we can prove by contradiction that $\Vert\frac{d}{dt}(n_i{\nabla_{\mathbf{x}_{ik}} f_{ij}(\mathbf{x})}+\sum_{s=1}^p\frac{n_i}{q_{is}}\delta_{ijs}\lambda_s\nabla_{\mathbf{x}_{ik}} g_{s} (\mathbf{x}))\Vert_\infty
	<c_5(T)$ for all $t\geq T$ and $i\in\mathcal{V}, j,k \in\mathcal{N}_i$.
	
	Suppose that there exists a time instant $T'>T$ such that $\Vert\frac{d}{dt}(n_i{\nabla_{\mathbf{x}_{ik}} f_{ij}(\mathbf{x})}+\sum_{s=1}^p\frac{n_i}{q_{is}}\delta_{ijs}\lambda_s\nabla_{\mathbf{x}_{ik}} g_{s} (\mathbf{x}))\Vert_\infty
	< c_5(T)$ for all $T\leq t<T'$ and $i\in\mathcal{V}, j,k \in\mathcal{N}_i$, and $\Vert\frac{d}{dt}(n_i{\nabla_{\mathbf{x}_{ik}} f_{ij}(\mathbf{x})}+\sum_{s=1}^p\frac{n_i}{q_{is}}\delta_{ijs}\lambda_s\nabla_{\mathbf{x}_{ik}} g_{s} (\mathbf{x}))\Vert_\infty
	=c_5(T)$ for $t=T_3$ and some $i\in\mathcal{V}, j,k \in\mathcal{N}_i$.  According to the definition of $c_5(T)$, we have $V(T')>c_b+c_cT^2$.

	Furthermore, the following conclusions hold: i) for all $0 \leq t\leq T'$ and $i\in\mathcal{V}, j,k \in\mathcal{N}_i$,  $\Vert\frac{d}{dt}(n_i{\nabla_{\mathbf{x}_{ik}} f_{ij}(\mathbf{x})}+\sum_{s=1}^p\frac{n_i}{q_{is}}\delta_{ijs}\lambda_s\nabla_{\mathbf{x}_{ik}} g_{s} (\mathbf{x}))\Vert_\infty
	\leq c_5(T)$; ii) for all $T_2\leq t\leq T'$, 	$\mathbf{y}_{ij}^k(t)=\sum_{r=1}^{n_i}{\nabla_{\mathbf{x}_{ik}} f_{ir}(\mathbf{x})}+\sum_{s=1}^p\lambda_s\nabla_{\mathbf{x}_{ik}} g_{s} (\mathbf{x})$. Here, i) is obtained by assumption and ii) is obtained by utilizing Lemma 2.
	
	Then, for all $T_2\leq t\leq T'$,  we have $\dot{V}\leq 0$. Furthermore, $V(T')\leq V(T_2)\leq c_b+c_cT_2^2<c_b+c_cT^2$, which contradicts to the previous conclusion that $V(T')>c_b+c_cT^2$.
	
	Thus, for all $t\geq 0$, $\Vert\frac{d}{dt}(n_i{\nabla_{\mathbf{x}_{ik}} f_{ij}(\mathbf{x})}+\sum_{s=1}^p\frac{n_i}{q_{is}}\delta_{ijs}\lambda_s\nabla_{\mathbf{x}_{ik}} g_{s} (\mathbf{x}))\Vert_\infty
	<c_5(T)$. According to  Lemma \ref{l2},  for all $t\geq T_2$, 	$\mathbf{y}_{ij}^k(t)=\sum_{r=1}^{n_i}{\nabla_{\mathbf{x}_{ik}} f_{ir}(\mathbf{x})}+\sum_{s=1}^p\lambda_s\nabla_{\mathbf{x}_{ik}} g_{s} (\mathbf{x})$.
	
	The following analysis is similar to Theorem  \ref{theorem1}, and thus is omitted.

	\subsection{Proof of Theorem \ref{theorem4}} \label{ap7}
	
	Based on Lemma \ref{fc}, which is still true, the trajectory of $\mathbf{x}_{ij}$ is equivalent to the trajectory  in the following auxiliary system:
	\begin{subequations} \label{al81}
		\begin{align}
			\dot{\mathbf{x}}_{ij}&=\mathcal{P}_{\Omega_{ij}}[{\mathbf{x}}_{ij}-\mathbf{y}_{ij}^j]-{\mathbf{x}}_{ij}, \label{bl3a11}\\
			\mathbf{y}_{ij}^k&=\mathbf{w}_{ij}^k+n_i{\nabla_{\mathbf{x}_{ik}} f_{ij}(\mathbf{z}_{ij})}+\notag \\
			&\sum_{s=1}^p\frac{n_i}{q_{is}}\delta_{ijs}\lambda_s\nabla_{\mathbf{x}_{ik}} g_{s}(\mathbf{z}_{ij}), \label{bl3b11}\\
			\dot{\mathbf{w}}_{ij}^k&=-\gamma_i\sum_{r=1}^{n_i} a_{i,jr} \text{sgn}(\mathbf{y}_{ij}^k-\mathbf{y}_{ir}^k), \label{bl3c11}
		\end{align}
	\end{subequations}
	where $k=1,\cdots, n_i$, and for $s=1,\cdots, p$, 
	\begin{align}
		\dot{\lambda}_{s}=\begin{cases} 
			0, & t<T_2, \\
			\mathcal{P}_{\mathbb{R}_{\geq 0}}[\lambda_{s}+g_{s}(\mathbf{x})]-\lambda_{s}, & t\geq T_2,  
		\end{cases}  
		\label{bl3d11}
	\end{align}
	and
	\begin{align} 	\dot{\mathbf{z}}_{ij,wq}&=-\alpha\sum_{b\in\mathcal{N},h\in\mathcal{N}_b}a_{ij,bh}(\mathbf{z}_{ij, wq}-\mathbf{z}_{bh,wq}) \notag \\
		-\beta\text{sgn}&(\sum_{b\in\mathcal{N},h\in\mathcal{N}_b}a_{ij,bh}(\mathbf{z}_{ij, wq}-\mathbf{z}_{bh,wq})) \notag \\
		\text { for} & \text{ all }  w\in\mathcal{N}, q\in\mathcal{N}_w \text{ and }  [w,q]\neq [i,j]. \notag
	\end{align}
	
	Lemma \ref{l8} does not hold in this section.  	According to (\ref{bl3d11}), for all $t\geq 0$,	${\lambda}_s(t)\leq {\lambda}_s(0)+ c_at$, and there exist constants $c_b$ and $c_c$ such that the Lyapunov function	$V(\theta)=(\theta-H(\theta))^TU(\theta)-\frac{1}{2}\Vert\theta-H(\theta)\Vert^2+\frac{1}{2}\Vert\theta-\theta^*\Vert^2\leq c_b+c_ct^2$.
	
	Let $T>0$ be an arbitrary time constant, and $c_6(T,\alpha,\beta)$ be a constant  satisfying that  	$\Vert\frac{d}{dt}(n_i{\nabla_{\mathbf{x}_{ik}} f_{ij}(\mathbf{z}_{ij})}+ \sum_{s=1}^p\frac{n_i}{q_{is}}\delta_{ijs}\lambda_s\nabla_{\mathbf{x}_{ik}} g_{s} (\mathbf{z}_{ij}))\Vert_\infty< c_{6}(T,\alpha,\beta)$ for all $\theta$ satisfying $V(\theta)\leq c_b+c_bT^2$, and all $i\in\mathcal{N}$, $j,k\in\mathcal{N}_i$, where the boundedness of $\mathbf{z}_{ij}$ (see Lemma \ref{fc}) and $\dot{\mathbf{z}}_{ij}$ is used.
	
	Denote $T_5\triangleq\max_{i\in\mathcal{N}, k\in\mathcal{N}_i}\{ \frac{1}{2(\gamma_i-c_{6}(T,\alpha, \beta))}\sum_{j=1}^{n_i}\sum_{r=1}^{n_i}$ $a_{i,jr} \Vert\mathbf{y}_{ij}^k(0)-\mathbf{y}_{ir}^k(0) \Vert_\infty\}$. By selecting a sufficiently large  $\gamma_i$  such that   $T_5< T$, the following lemma can be obtained to replace Lemma \ref{l8}.
	
	\begin{lemma}
		Let $T$ be an arbitrary positive constant and suppose that $T_2<T$. The dynamics in (\ref{bl3b11}) and  (\ref{bl3c11}) guarantee that if $\gamma_i>c_{6}(T,\alpha,\beta)$, and  $T_5\triangleq\max_{i\in\mathcal{N}, k\in\mathcal{N}_i}\{ \frac{1}{2(\gamma_i-c_{6}(T,\alpha, \beta))}\sum_{j=1}^{n_i}\sum_{r=1}^{n_i} a_{i,jr} \Vert\mathbf{y}_{ij}^k(0)-\mathbf{y}_{ir}^k(0) \Vert_\infty\} <T$, we have (a) for all $0 \leq t\leq T$,  	$\Vert\frac{d}{dt}(n_i{\nabla_{\mathbf{x}_{ik}} f_{ij}(\mathbf{z}_{ij})}+\sum_{s=1}^p\frac{n_i}{q_{is}}\delta_{ijs}\lambda_s\nabla_{\mathbf{x}_{ik}} g_{s} (\mathbf{z}_{ij}))\Vert_\infty< c_{6}(T,\alpha,\beta)$; (b) for all $T_6\triangleq\max\{T_2,T_5\}\leq t\leq T$, 	$\mathbf{y}_{ij}^k(t)=\sum_{r=1}^{n_i}{\nabla_{\mathbf{x}_{ik}} f_{ir}(\mathbf{x})}+\sum_{s=1}^p\lambda_s\nabla_{\mathbf{x}_{ik}} g_{s} (\mathbf{x})$.
	\end{lemma}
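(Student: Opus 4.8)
The plan is to treat this lemma as the partial-decision-information counterpart of Lemma~\ref{lem}: the only structural change is that the gradients are now evaluated at the local estimate $\mathbf{z}_{ij}$ rather than at the true profile $\mathbf{x}$. First I would fix the arbitrary constant $T>0$ and work exclusively on the finite window $[0,T]$, which is exactly what lets us sidestep the circular dependence flagged in the discussion of Theorem~\ref{t2}. The two ingredients I would assemble are Lemma~\ref{fc}, which both bounds $\mathbf{z}_{ij}$ and $\dot{\mathbf{z}}_{ij}$ and guarantees the exact identity $\mathbf{z}_{ij}(t)=\mathbf{x}(t)$ once $t\geq T_2$, and Lemma~\ref{l2}, the finite-time average-consensus-tracking result, applied to the pair~(\ref{bl3b11})--(\ref{bl3c11}) under the identification $\mathbf{w}_{ij}^k\leftrightarrow z_i$, $\mathbf{y}_{ij}^k\leftrightarrow x_i$ with gain $\gamma_i$.

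For part (a) I would first record the unconditional linear growth bound on the multipliers. From~(\ref{bl3d11}), $\lambda_s$ is frozen at $\lambda_s(0)$ for $t<T_2$ and thereafter satisfies $\Vert\dot\lambda_s\Vert\leq|g_s(\mathbf{x})|\leq c_a$ (the constant $c_a$ existing because $\mathbf{x}\in\Omega$ is bounded by Lemma~\ref{lemma1}), so $\lambda_s(t)\leq\lambda_s(0)+c_at$ for all $t$. Plugging this into $V(\theta)=(\theta-H(\theta))^TU(\theta)-\tfrac12\Vert\theta-H(\theta)\Vert^2+\tfrac12\Vert\theta-\theta^*\Vert^2$ yields, exactly as in the full-constraint proof, $V(t)\leq c_b+c_ct^2$, hence $V(t)\leq c_b+c_cT^2$ throughout $[0,T]$. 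Since $c_6(T,\alpha,\beta)$ was defined precisely as a bound on the tracked-signal derivative valid on the sublevel set $\{V(\theta)\leq c_b+c_cT^2\}$ (using the boundedness of $\mathbf{z}_{ij}$ and $\dot{\mathbf{z}}_{ij}$ from Lemma~\ref{fc}), conclusion (a) is immediate.

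For part (b) I would feed the derivative bound from (a), namely that the signal $n_i\nabla_{\mathbf{x}_{ik}}f_{ij}(\mathbf{z}_{ij})+\sum_{s=1}^p\frac{n_i}{q_{is}}\delta_{ijs}\lambda_s\nabla_{\mathbf{x}_{ik}}g_s(\mathbf{z}_{ij})$ has infinity-norm derivative below $c_6<\gamma_i$ on $[0,T]$, into Lemma~\ref{l2}. This gives finite-time average consensus for $t\geq T_5$, so that $\mathbf{y}_{ij}^k$ equals the per-coalition average $\frac{1}{n_i}\sum_{r=1}^{n_i}\big(n_i\nabla_{\mathbf{x}_{ik}}f_{ir}(\mathbf{z}_{ir})+\sum_{s=1}^p\frac{n_i}{q_{is}}\delta_{irs}\lambda_s\nabla_{\mathbf{x}_{ik}}g_s(\mathbf{z}_{ir})\big)$. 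For $t\geq T_2$ Lemma~\ref{fc} replaces every $\mathbf{z}_{ir}$ by $\mathbf{x}$, and the counting identity $\sum_{r=1}^{n_i}\delta_{irs}=q_{is}$ collapses $\sum_{r}\frac{\delta_{irs}}{q_{is}}$ to $1$; the $f$-term averages to $\sum_r\nabla_{\mathbf{x}_{ik}}f_{ir}(\mathbf{x})$ and the $g$-term to $\sum_s\lambda_s\nabla_{\mathbf{x}_{ik}}g_s(\mathbf{x})$, which is exactly the claimed value. Intersecting the two time thresholds gives the stated range $t\geq T_6=\max\{T_2,T_5\}$.

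The step I expect to be delicate is the loop-breaking in part (a): the derivative bound $c_6$ ostensibly needs a bound on $\lambda$, while the clean bound on $\lambda$ usually comes from $\dot V\leq0$, which itself presupposes that the consensus estimate has already converged. The resolution, and the reason for confining everything to $[0,T]$, is that on this finite window only the crude estimate $\lambda_s(t)\leq\lambda_s(0)+c_at$ is needed, and it holds unconditionally; the sharper time-uniform multiplier bound and the extension of (a)--(b) to all $t\geq0$ are not part of this lemma and would be recovered afterwards by the contradiction argument already used for Theorem~\ref{t2}. A secondary check is purely bookkeeping: verifying that the factors $n_i$, $1/q_{is}$ and $\delta_{irs}$ recombine correctly after averaging and after the substitution $\mathbf{z}_{ir}=\mathbf{x}$, which is where the definition of $q_{is}$ as the number of agents holding $g_s$ in coalition $i$ is essential.
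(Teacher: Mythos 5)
Your proposal is correct and follows essentially the same route as the paper, whose proof of this lemma simply defers to the proof of Lemma~\ref{lem}: the unconditional linear growth bound $\lambda_s(t)\leq\lambda_s(0)+c_at$ gives $V(t)\leq c_b+c_ct^2$ on $[0,T]$, part (a) then follows from the definition of $c_{6}(T,\alpha,\beta)$ on the sublevel set $\{V\leq c_b+c_cT^2\}$ (using Lemma~\ref{fc} for the boundedness of $\mathbf{z}_{ij}$ and $\dot{\mathbf{z}}_{ij}$), and part (b) follows by applying Lemma~\ref{l2} on $[0,T]$ and substituting $\mathbf{z}_{ir}=\mathbf{x}$ for $t\geq T_2$. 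Your explicit verification of the averaging bookkeeping via $\sum_{r}\delta_{irs}=q_{is}$ is a detail the paper states only in the algorithm-description text, and your remarks on the loop-breaking role of the finite window match the paper's intent exactly.
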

	
	\begin{proof}
		It is similar to the proof of Lemma \ref{lem}, and thus is omitted here.
	\end{proof}

	Thus, for all $T_6\leq t\leq T$, $\dot{V}\leq 0$, and $V(t)\leq V(T_2)\leq c_b+c_cT_2^2<c_b+c_cT^2$. 
	
	Next, we can prove by contradiction that $\Vert\frac{d}{dt}(n_i{\nabla_{\mathbf{x}_{ik}} f_{ij}(\mathbf{z}_{ij})}+\sum_{s=1}^p\frac{n_i}{q_{is}}\lambda_s\nabla_{\mathbf{x}_{ik}} g_{s} (\mathbf{z}_{ij}))\Vert_\infty
	<c_6(T,\alpha,\beta)$ for all $t\geq T$, and  all $i\in\mathcal{N}$, $j\in\mathcal{N}_i$.
	
	Suppose that there exists a time $T'>T$ such that $\Vert\frac{d}{dt}(n_i{\nabla_{\mathbf{x}_{ik}} f_{ij}(\mathbf{z}_{ij})}+\sum_{s=1}^p\frac{n_i}{q_{is}}\lambda_s\nabla_{\mathbf{x}_{ik}} g_{s} (\mathbf{z}_{ij}))\Vert_\infty
	<c_6(T,\alpha,\beta)$ for all $T\leq t<T'$, and for all $i\in\mathcal{N}$, $j\in\mathcal{N}_i$, and $\Vert\frac{d}{dt}(n_i{\nabla_{\mathbf{x}_{ik}} f_{ij}(\mathbf{z}_{ij})}+\sum_{s=1}^p\frac{n_i}{q_{is}}\lambda_s\nabla_{\mathbf{x}_{ik}} g_{s} (\mathbf{z}_{ij}))\Vert_\infty
	=c_6(T,\alpha,\beta)$  for $t=T'$, and some $i\in\mathcal{N}$, $j\in\mathcal{N}_i$.  According to the definition of $c_6(T,\alpha,\beta)$, we have $V(T')>c_b+c_cT^2$. 
	
	Furthermore, in $[0,T']$,  since $\Vert\frac{d}{dt}(n_i{\nabla_{\mathbf{x}_{ik}} f_{ij}(\mathbf{z}_{ij})}+\sum_{s=1}^p\frac{n_i}{q_{is}}\lambda_s\nabla_{\mathbf{x}_{ik}} g_{s} (\mathbf{z}_{ij}))\Vert_\infty
	\leq c_6(T,\alpha,\beta)$ for all $i\in\mathcal{N}$, $j\in\mathcal{N}_i$,  according to Lemma \ref{l2}, we can obtain that for all $T_6\leq t\leq T'$, 	$\mathbf{y}_{ij}^k(t)=\sum_{r=1}^{n_i}{\nabla_{\mathbf{x}_{ik}} f_{ir}(\mathbf{x})}+\sum_{s=1}^p\lambda_s\nabla_{\mathbf{x}_{ik}} g_{s} (\mathbf{x})$.
	
	Then, for all $T_6\leq t\leq T'$,   $\dot{V}\leq 0$. Furthermore, $V(T')\leq V(T_6)\leq c_b+c_cT_6^2<c_b+c_cT^2$, which contradicts to the above conclusion that $V(T')>c_b+c_cT^2$.
	
	Thus, for all $t\geq 0$, $\Vert\frac{d}{dt}(n_i{\nabla_{\mathbf{x}_{ik}} f_{ij}(\mathbf{z}_{ij})}+\sum_{s=1}^p\frac{n_i}{q_{is}}\delta_{ijs}\lambda_s\nabla_{\mathbf{x}_{ik}} g_{s} (\mathbf{z}_{ij}))\Vert_\infty< c_{6}(T,\alpha,\beta)$. According to Lemma  \ref{l2},  for all $t\geq T_6$, 	$\mathbf{y}_{ij}^k(t)=\sum_{r=1}^{n_i}{\nabla_{\mathbf{x}_{ik}} f_{ir}(\mathbf{x})}+\sum_{s=1}^p\lambda_s\nabla_{\mathbf{x}_{ik}} g_{s} (\mathbf{x})$.
	
	The following analysis is similar to Theorem  \ref{theorem1}, and thus is omitted. 
\end{document}